\newtheorem{Def}{Definition}[section]
\newtheorem{Thm}[Def]{Theorem}
\newtheorem{Lem}[Def]{Lemma}
\newtheorem{Cor}[Def]{Corollary}
\newtheorem{Exm}[Def]{Example}
\newtheorem{Rem}[Def]{Remark}
\begin{document}
\title{
A function from Stern's diatomic sequence, and its properties}
\author{Yasuhisa Yamada}
\maketitle

\begin{abstract}
We define a function by refining Stern's diatomic sequence.
We name it the {\it assembly function}.
It is strictly increasing continuous.
The first and the second main theorems are on an action to the function.
The third theorem is on differentiability of the function at rational points.
%Thirdly we show that the affine similarity dimension of the graph of 
%the function is greater than $1$.

\bigskip

\noindent {\bf 2010 Mathematics Subject Classification:}\
11A55, 11J70, 30B70.

\noindent {\bf Keywords:}\
Stern's diatomic sequence, continued fraction, binary number.
%affine similarity dimension.
\end{abstract}

\tableofcontents

\newpage

\section{Introduction}
The Stern's diatomic sequence 
is a sequence $\{a_m \}_{m=0}^{\infty}$ of non-negative integers
defined by 
$
a_0=0,\ a_1=1,\ a_{2m}=a_m,\ a_{2m+1}=a_m+a_{m+1}.
$
M. A. Stern defined it in \cite{Stern}, 
after that several authors have studied it (e.g. \cite{Calkin, Giuli, Northshield}).
In the present paper, we refine $a_m$ as $[2^n:m] \, (m, n \in \mathbb{Z}_{\ge 0}, 0 \le m \le 2^n)$, 
which is called the {\it Stern's diatomic integer } (SDI, for short) 
with {\it depth} $n$ and {\it order} $m$.
An SDI $[2^n:m]$ is nothing but $a_m$ as an integer.
We arrange SDIs as vertices of a fixed infinite graph.
The resulting one is called the Stern's diatomic table (SDT, for short) 
(cf. Definition \ref{Def_SDA} and Fig. 2-1).
Precisely, $[2^n:m]$ is situated on the $n$-th line (the depth $n$) and 
the order $m$ from the left in the SDT.
As a result, an SDI $[2^n:m]$ is an integer $a_m$ with information of the place in the table.
In Section 2, we give definitions and some properties of SDI and SDT. 

In Section 3, we give a definition and some properties of {\it design}.
For $[2^n:m]$, we define a binary number presentation of it 
as $\{m\}_n=d_1d_2 \cdots d_n$ such that $d_i=0$ or $1\ (1 \le i \le n)$, 
and $\sum_{i=1}^n2^{n-i}d_i=m$, 
and we call $\{m\}_n$ the {\it design} of $[2^n:m]$.
Then we also denote $[2^n:m]$ by $[\{m\}_n]$. 
We often regard $\{m\}_n$ as just a word (not a number).

In Section 4, we give a continued fraction presentation for an SDI
via the design of the SDI by using continuant.
We show that $[2^n:m]/[2^n:2^n-m]$ is equal to the continued fraction 
determined from the design of $[2^n:m]$ (Theorem \ref{Lem_FRC_Product2}).

In Section 5, we define and discuss a Stern's diatomic matrix.
Let $U(2^n:m)$ be a $2 \times 2$ matrix determined from $[2^n:m]$
with the (1, 2)-entry $[2^n:m]$.
We call $U(2^n:m)$ the {\it Stern's diatomic matrix} (SDM, for short) of $[2^n:m]$.
We also denote $U(2^n:m)$ by $U(\{m\}_n)$, and call $\{m\}_n$ the {\it design} of $U(2^n:m)$. 
Then it is a unimodular matrix over $\mathbb{Z}$ with non-negative entries (Theorem \ref{Thm_det=1}).
Conversely, every unimodular matrix over $\mathbb{Z}$ with non-negative entries is 
of the form $U(2^n:m)$ (Theorem \ref{Thm_Modular Matrix_Depth Order}).

In Section 6, we give a definition and some properties of the {\it assembly function}.
Let $S$ be the set of rational numbers of the form $m/2^n$, 
where $m$ and $n$ are two non-negative integers with $0 \le m \le 2^n-1$.
Then we define a map 
$
\mathcal{A} : S \, \rightarrow \, \mathbb{R}_{ \ge 0}
$
by
\[
\mathcal{A}\,\bigg(\frac{m}{2^n}\bigg)=\frac{[2^n:m]}{[2^n:2^n-m]}.
\]
By Theorem \ref{Thm_FRC_Order} (Corollary \ref{Thm_RRC_Continued Fraction}), 
the image of $\mathcal{A}$ is $\mathbb{Q}_{ \ge 0}$.
Since $\mathcal{A}$ is strictly increasing,
and $S$ and $\mathbb{Q}_{\ge0}$ are dense in $[0, 1)$ and $\mathbb{R}_{\ge0}$ respectively,
we can define 
$
\mathcal{A} : [0, 1) \, \rightarrow \, \mathbb{R}_{\ge0}
$
by completing the original one.
Then $\mathcal{A}$ is a strictly increasing continuous function (Theorem \ref{KIJLMMKU}).
We call $\mathcal{A}$ the {\it assembly function}. 
By using the assembly function, 
we prove two main theorems ``Design Composition Theorem I'' 
(Theorem \ref{Thm_The Deviding Formula of Ichio Function})
and ``Design Composition Theorem II'' 
(Theorem \ref{Thm_The Deviding Formula of Matrix}).
Design Composition Theorem I shows an action of SDM (SDI) to the assembly function.
For two designs $\{m\}_n$ and $\{m'\}_{n'}$, 
we define the composition $\{m\}_n \cdot \{m'\}_{n'}$ of them as dyadic words (Definition \ref{SJEDGFSTYT}).
Then the set of designs including the empty design has a free monoid structure whose generators are $0$ and $1$,
and the unit is the empty word.
``Design Composition Theorem II'' gives us a homomorphism of monoids 
from the set of designs to the set of $2 \times 2$ unimodular matrices with non-negative entries, 
which is a monoid representation.
It is not so hard to see that the assembly function is essentially equivalent to 
the inverse function of the Minkowski's question mark function (\cite{Conley}).

%that the assembly function is essentially equivalent to the inverse function 
%of the Minkowski's question mark function (Theorem \ref{Qmark}).

In Section 7, we define a {\it periodic design} which is an infinite design.
Then the set of periodic designs is mapped to the set of 
quadratic irrational numbers via the assembly function (Theorem 7.4).
We show an essentially equivalent result to Legendre theorem
on an expression of the square of a rational number via design (Corollary \ref{DDDWHKHCDGH}).

In Section 8, as an application of previous sections, 
we discuss differentiability of the assembly function at rational points.
Lebesgue's theorem says that a monotonely increasing continuous function 
is differentiable at almost every points.
Since the assembly function is strictly increasing continuous, 
it is differentiable at almost every points.
The third main theorem is that at a rational point if the assembly function is differentiable,
then the derivation vanishes, and 
if the assembly function is not differentiable,
then the derivation is $\infty$ (Theorem \ref{main3}).
In our next paper \cite{Yamada2}, we show that at any point,
the derivation is $0$ or $\infty$, and gave a necessary and sufficient condition
for differentiability on a rational point.

%In Section 9, 
%we show that the affine similarity dimension of the graph of the assembly function is greater than $1$.

In our forthcoming papers, we apply the present results to solve the Markov Conjecture which is one of important Diophantine problems.

\section{Stern's diatomic integer and Stern's diatomic table}

The {\it Stern's diatomic sequence} is defined by
$
a_0=0,\ a_1=1,\ a_{2m}=a_m,\ a_{2m+1}=a_m+a_{m+1}.
$
We refine it by the following definition.

\begin{Def}
\label{Def_SDA}
{\rm For two non-negative integers $m$ and $n$ with $0 \le m \le 2^n$, we define an integer $[2^n:m]$ by the following rules:} 
\begin{enumerate}
{\rm \item $[2^0:0]=0, \,\,\, [2^0:1]=1, $
\item $[2^{n+1}:2m]=[2^n:m]  \quad ( 0 \le m \le 2^n ),$
\item $[2^{n+1}:2m+1]=[2^n:m]+[2^n:m+1] \quad ( 0 \le m \le 2^n-1 ).$} 
\end{enumerate}
{\rm We call $[2^n:m]$ the {\color{black}{\it Stern's diatomic integer}} (SDI, for short) with {\color{black}{\it depth}} $n$ and {\color{black}{\it order}} $m$.
SDIs are expressed in Figure 2-1.We call this table of SDIs the {\color{black}{\it Stern's diatomic table}} (SDT, for short).}
\end{Def}

\begin{figure}[h]
\[
\UseTips
\newdir{ >}{!/-5pt/\dir{>}}
\xymatrix @=0.15pc @*[c]
{\underset{(0)}{[2^0:0]} \ar@{-}[rrrrrrrr] \ar@{->}[ddd] & & & & \ar@{->}[ddd] & & & & \underset{(1)}{[2^0:1]} \ar@{->}[ddd] \\
\vspace{2\baselineskip} \\
& & & & & & & & \\
\underset{(0)}{[2^1:0]} \ar@{-}[rrrr] \ar@{->}[ddd] & & \ar@{->}[ddd] & & \underset{(1)}{[2^1:1]} \ar@{-}[rrrr] \ar@{->}[ddd] & & \ar@{->}[ddd] & & \underset{(1)}{[2^1:2]} \ar@{->}[ddd]\\
\vspace{2\baselineskip} \\
& & & & & & & & \\
\underset{(0)}{[2^2:0]} \ar@{-}[rr] \ar@{->}[ddd] & \ar@{->}[ddd] & \underset{(1)}{[2^2:1]} \ar@{-}[rr] \ar@{->}[ddd] & \ar@{->}[ddd] & \underset{(1)}{[2^2:2]} \ar@{->}[ddd] \ar@{-}[rr] \ar@{->}[ddd] & \ar@{->}[ddd] & \underset{(2)}{[2^2:3]} \ar@{-}[rr] \ar@{->}[ddd] & \ar@{->}[ddd] & \underset{(1)}{[2^2:4]} \ar@{->}[ddd] \\
\vspace{2\baselineskip} \\
& & & & & & & & \\
\underset{(0)}{[2^3:0]} & \underset{(1)}{[2^3:1]} & \underset{(1)}{[2^3:2]} & \underset{(2)}{[2^3:3]} & \underset{(1)}{[2^3:4]} & \underset{(3)}{[2^3:5]} & \underset{(2)}{[2^3:6]} & \underset{(3)}{[2^3:7]} & \underset{(1)}{[2^3:8]} }
\]
\begin{center}
\small{Fig. 2-1}
\end{center}
\end{figure}

For two non-negative integers $m$ and $n$ with $0 \le m \le 2^n$, we have $[2^n:m]=a_m,$
but $[2^n:m]$ is an integer $a_m$ with information of the place in the table SDT.  
From the definition, we can immediately have the following relations.

\begin{enumerate}
\item $[2^{n+1}:2m]=[2^n:m] \quad (0 \le m \le 2^n)$\,,
\item $[2^{n+1}:m]=[2^n:m] \quad (0 \le m \le 2^n)$\,,
\item $[2^n:2m]=[2^n:m] \quad (0 \le m \le 2^{n-1})$\,. 
\end{enumerate}

\begin{Thm}
\label{Thm_det=1}
Let $m$ and $n$ be two non-negative integers with $0 \le m \le 2^n-1$. Then, we have
\[
[2^n:m+1][2^n:2^n-m]-[2^n:m][2^n:2^n-(m+1)]\!=\!1.
\tag{2:1} \label{det=1}
\]
\end{Thm}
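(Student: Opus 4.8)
The plan is to argue by induction on the depth $n$, splitting the inductive step according to the parity of the order $m$. For $0\le m\le 2^n-1$ set $D_n(m):=[2^n:m+1][2^n:2^n-m]-[2^n:m][2^n:2^n-(m+1)]$, so that the claim is $D_n(m)=1$. (Equivalently, $D_n(m)$ is a $2\times2$ determinant built from consecutive SDIs and their reflections, which foreshadows the unimodularity of the SDM in Section 5.) The base case $n=0$ forces $m=0$, and Definition \ref{Def_SDA}(1) gives $D_0(0)=[2^0:1]\,[2^0:1]-[2^0:0]\,[2^0:0]=1$.

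For the inductive step, assume $D_n(k)=1$ for every $0\le k\le 2^n-1$ and fix $m$ with $0\le m\le 2^{n+1}-1$. The idea is to rewrite each of the four depth-$(n+1)$ SDIs occurring in $D_{n+1}(m)$ in terms of depth-$n$ SDIs using rules (2) and (3) of Definition \ref{Def_SDA}, and to check that the result collapses to $D_n(k)$ for a suitable $k$. Here one must expand not only the direct entries $[2^{n+1}:m]$ and $[2^{n+1}:m+1]$ but also the two reflected entries $[2^{n+1}:2^{n+1}-m]$ and $[2^{n+1}:2^{n+1}-(m+1)]$; since $2^{n+1}$ is even, the index $2^{n+1}-m$ has the same parity as $m$, so the two direct entries have opposite parities and so do the two reflected ones. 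If $m=2k$, rules (2),(3) give $[2^{n+1}:2k]=[2^n:k]$, $[2^{n+1}:2k+1]=[2^n:k]+[2^n:k+1]$, $[2^{n+1}:2^{n+1}-2k]=[2^n:2^n-k]$ and $[2^{n+1}:2^{n+1}-2k-1]=[2^n:2^n-k-1]+[2^n:2^n-k]$; substituting and cancelling the two copies of $[2^n:k][2^n:2^n-k]$ leaves exactly $D_n(k)$. If $m=2k+1$, the analogous expansion produces, after cancellation of the two copies of $[2^n:k+1][2^n:2^n-k-1]$, again $D_n(k)$. In both cases $0\le k\le 2^n-1$, so $D_{n+1}(m)=D_n(k)=1$ by hypothesis.

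I expect the only delicate point to be bookkeeping rather than any genuine obstacle: one has to confirm that the reflected indices stay in the admissible range so that rules (2),(3) legitimately apply (in particular $k+1\le 2^n$ and $2^n-k-1\ge 0$, both of which follow from $m\le 2^{n+1}-1$), and one has to pair the four expansions correctly so that the unwanted cross terms cancel in each parity case. Conceptually, the mechanism is the self-similarity of the Stern table encoded in the two defining recurrences: passing from depth $n$ to depth $n+1$ amounts to inserting mediants, an operation that preserves the relevant determinant, which is precisely what the cancellations record.
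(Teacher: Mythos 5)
Your proposal is correct and follows essentially the same route as the paper: induction on the depth $n$, splitting the inductive step by the parity of $m$, expanding all four depth-$(n+1)$ entries (including the reflected ones, via $2^{n+1}-2k=2(2^n-k)$ and $2^{n+1}-2k-1=2(2^n-k-1)+1$) with rules (2) and (3) of Definition \ref{Def_SDA}, and cancelling the cross terms to recover $D_n(k)$. Your cancellations and range checks match the paper's computation exactly, so there is nothing to add.
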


\begin{proof}
We prove (\ref{det=1}) by induction on $n$.
If $n=0$,  then the equation clearly holds. 
We suppose that the statement holds for the case $n \ge 0$. 
We show the case $n+1$. 
If $m$ is an even integer such that $m=2l \,\, (l=0, 1, 2, \ldots, 2^n-1)$, by the assumption, we have  
\[
\begin{split}
&[2^{n+1}:m+1][2^{n+1}:2^{n+1}-m]-[2^{n+1}:m][2^{n+1}:2^{n+1}-(m+1)] \\
&=[2^{n+1}:2l+1][2^{n+1}:2^{n+1}-2l]-[2^{n+1}:2l][2^{n+1}:2^{n+1}-(2l+1)] \\
&=\big( \, [2^n:l]+[2^n:l+1] \, \big)[2^n:2^n-l]-[2^n:l]\big( \, [2^n:2^n-l]+[2^n:2^n-(l+1)] \, \big) \qquad \qquad \qquad \qquad \qquad \qquad \qquad \qquad \qquad \\
&=[2^n:l+1][2^n:2^n-l]-[2^n:l][2^n:2^n-(l+1)]=1. 
\end{split}
\]
If $m$ is an odd integer such that $m=2l+1 \,\, (l=0, 1, 2, \ldots, 2^n-1)$, by the assumption, we have 
\[
\begin{split}
&[2^{n+1}:m+1][2^{n+1}:2^{n+1}-m]-[2^{n+1}:m][2^{n+1}:2^{n+1}-(m+1)] \\
&=[2^{n+1}:2l+2][2^{n+1}:2^{n+1}-(2l+1)]-[2^{n+1}:2l+1][2^{n+1}:2^{n+1}-(2l+2)]  \qquad \qquad \qquad \qquad \qquad \qquad \\ 
&=[2^n\!:l+1]\big(\,[2^n\!:\!2^n-l]+[2^n\!:\!2^n-(l+1)]\,\big)-\big(\,[2^n\!:l]+[2^n\!:l+1]\,\big)[2^n\!:\!2^n-(l+1)] \\
&=[2^n:l+1][2^n:2^n-l]-[2^n:l][2^n:2^n-(l+1)]=1. 
\end{split}
\]
Therefore we have the result.
\end{proof} 

From this theorem, we immediately have the following result.

\begin{Cor}
\label{CorCoprime}
Let $m$ and $n$ be two non-negative integers with $0 \le m \le 2^n-1$. Then, we have \\
{\rm (1)} \, $[2^n:m],\, [2^n:m+1]$ are coprime. \\[3pt]
{\rm (2)} \, $[2^n:m],\, [2^n:2^n-m]$ are coprime.
\end{Cor}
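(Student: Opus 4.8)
The plan is to read off both coprimality statements directly from the Bézout-type identity supplied by Theorem \ref{Thm_det=1}. Recall that two integers $a$ and $b$ are coprime precisely when there exist integers $x,y$ with $ax+by=1$; thus it suffices in each case to exhibit the relevant pair as the two ``coefficients'' in a representation of $1$. The identity (\ref{det=1}) already writes $1$ as a signed sum of two products, and the whole content of the corollary is the observation that, in each of the two claims, the two SDIs asserted to be coprime sit as one factor in each of the two products.

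For part (1), I would rewrite (\ref{det=1}) as
\[
[2^n:m+1]\cdot[2^n:2^n-m]+[2^n:m]\cdot\big(\!-[2^n:2^n-(m+1)]\big)=1 .
\]
Any common divisor $d$ of $[2^n:m]$ and $[2^n:m+1]$ then divides the first product through its factor $[2^n:m+1]$ and the second product through its factor $[2^n:m]$, hence divides the left-hand side, which equals $1$; therefore $d=\pm 1$ and the two SDIs are coprime.

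For part (2), I would instead group (\ref{det=1}) as
\[
[2^n:2^n-m]\cdot[2^n:m+1]+[2^n:m]\cdot\big(\!-[2^n:2^n-(m+1)]\big)=1 ,
\]
so that now $[2^n:2^n-m]$ appears as a factor of the first product and $[2^n:m]$ as a factor of the second. By the same argument any common divisor of $[2^n:m]$ and $[2^n:2^n-m]$ divides $1$, giving the claim.

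There is essentially no obstacle here: the only point requiring a moment's care is the bookkeeping of which factor belongs to which product, since the two parts use the \emph{same} identity but regroup it differently. One should simply verify that in each grouping the two SDIs in question really do occupy distinct products, after which the standard characterization of coprimality in terms of a representation of $1$ finishes the proof immediately for all admissible $m,n$ in the range $0\le m\le 2^n-1$.
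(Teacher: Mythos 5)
Your proposal is correct and matches the paper's reasoning: the paper derives Corollary \ref{CorCoprime} immediately from the identity (\ref{det=1}) of Theorem \ref{Thm_det=1}, which is exactly the B\'ezout-type argument you spell out. The only difference is that the paper leaves the regrouping implicit, while you make it explicit; no gap either way.
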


It is essential to find the values of $[2^n:m]$ with odd order since $[2^n:2m]=[2^n:m]$.

\begin{Lem}
\label{Lem_FRC-formula1}
For any non-negative integer $a$ and two consecutive SDIs $[2^n\!:\!m], \,[2^n\!:\!m+1] \,(0 \le m \le 2^n-1)$, \\
{\rm (1)} $a[2^n :m+1]+[2^n :m]=[2^{n+a}:2^am+2^a-1]$. In particular, $[2^a:2^a-1]=a \,\, (m=n=0)$. \\[3pt]
{\rm (2)} $[2^n :m+1]+a[2^n :m]=[2^{n+a}:2^am+1]$.
\end{Lem}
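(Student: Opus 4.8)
The plan is to fix $n$ and $m$ and prove both identities by induction on the exponent $a$, since the right-hand order indices $2^am + 2^a - 1$ and $2^am + 1$ both transform cleanly under the defining recursion when $a$ is incremented. The base case $a=0$ is immediate: in (1) the right-hand side is $[2^n:m]$ and the left-hand side is $0\cdot[2^n:m+1]+[2^n:m]=[2^n:m]$, while in (2) both sides equal $[2^n:m+1]$.

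For the inductive step of (1), I would write the target order at level $a+1$ as $2^{a+1}m + 2^{a+1} - 1 = 2(2^am + 2^a - 1) + 1$, exhibiting it as odd so that rule (3) of Definition \ref{Def_SDA} applies: setting $k = 2^am + 2^a - 1$, we get $[2^{(n+a)+1}:2k+1] = [2^{n+a}:k] + [2^{n+a}:k+1]$. The first summand is exactly the right-hand side of (1) at level $a$, hence equals $a[2^n:m+1] + [2^n:m]$ by the induction hypothesis. The second summand has even order $k+1 = 2^a(m+1)$, so iterating the relation $[2^{n+1}:2m]=[2^n:m]$ a total of $a$ times collapses it to $[2^n:m+1]$. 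Adding the two yields $(a+1)[2^n:m+1] + [2^n:m]$, as required. The special case $[2^a:2^a-1]=a$ is then just (1) evaluated at $m=n=0$, where $a[2^0:1]+[2^0:0]=a$.

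The inductive step of (2) is entirely parallel: I would write $2^{a+1}m + 1 = 2(2^am) + 1$, apply rule (3) with $k = 2^am$ to obtain $[2^{n+a}:2^am] + [2^{n+a}:2^am+1]$, collapse the even-order term $[2^{n+a}:2^am]$ to $[2^n:m]$ by iterating $[2^{n+1}:2m]=[2^n:m]$, and invoke the induction hypothesis on the remaining term, giving $[2^n:m] + \big([2^n:m+1]+a[2^n:m]\big) = [2^n:m+1]+(a+1)[2^n:m]$.

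There is no conceptual obstacle; the work is purely in the index bookkeeping. The two arithmetic identities $2^{a+1}m + 2^{a+1} - 1 = 2(2^am + 2^a - 1)+1$ and $2^{a+1}m+1 = 2\cdot 2^am + 1$ are what expose the single layer of recursion to peel off, and the only point requiring care is checking that every order index stays in its admissible range $0 \le \cdot \le 2^{\text{depth}}$ so that rule (3) and the halving relation legitimately apply. This holds because $2^am + 2^a - 1 \le 2^{n+a}-1$ and $2^am \le 2^{n+a}-1$ whenever $m \le 2^n-1$, while $2^a(m+1)\le 2^{n+a}$ for the even-order collapse.
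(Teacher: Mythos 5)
Your proof is correct and is essentially the paper's own argument: both proceed by induction on $a$, use rule (3) of Definition \ref{Def_SDA} together with the doubling relation $[2^{n+1}:2m]=[2^n:m]$, and run the same chain of equalities --- the paper builds up from $(a+1)[2^n:m+1]+[2^n:m]$ while you peel the SDI $[2^{n+a+1}:2^{a+1}m+2^{a+1}-1]$ apart, which is the identical computation read in the opposite direction. Your explicit verification that the order indices stay in the admissible ranges is a detail the paper leaves implicit, but it changes nothing in substance.
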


\begin{proof}
We prove (1) by induction on $a$.
It obviously holds for $a=0$. 
Suppose (1) holds for some non-negative integer $a$.
Then we observe that
\[
\begin{split}
&(a+1)[2^n :m+1]+[2^n :m]=a[2^n :m+1]+[2^n :m+1]+[2^n :m] \\
&=[2^{n+a}:2^am+2^a-1]+[2^{n+a}:2^am+2^a]=[2^{n+(a+1)}:2^{a+1}m+2^{a+1}-1], \\
\end{split}
\]
which completes the proof. (2) can be proved in the same way as (1).
\end{proof}

\begin{Def}
\label{Def_Partial Quotients}
{\rm For two positive coprime integers $a$ and $b$, we apply the Euclidean algorithm as 
\[
c_{i-2}=c_{i-1}r_i+c_i \,\,\,\,\, (i=0,1, \ldots, t-1),
\tag{2:2} \label{Euclid's algorithm}
\]
where $t \ge 1, c_{-2}=a, c_{-1}=b$, and $c_i$ and $r_i \,\,\, (i=0, 1, \ldots, t-1)$ are integers such that 
$r_0 \ge 0, r_i \ge 1 \,\,\,(i=1, 2, \ldots, t-1), 0 \le c_i < c_{i-1} \,\,\, (i=0, 1, \ldots, t-1), c_{t-2}=1$ and $c_{t-1}=0$.
Then we call the sequence $r_0, r_1, r_2, \ldots, r_{t-1}$ the {\it partial quotients} or simply the {\it quotients} of $a$ generated by $b$,
and $t$ the {\it length} of the partial quotients}.
\end{Def}

\begin{Lem}
\label{KRPFHWK}
Under the situation in Definition \ref{Def_Partial Quotients}, we have the following: \\
{\rm (1)} \, $r_0=0$ if and only if $a<b$. Equivalently, $r_0 \ge 1$ if and only if $a \ge b$. \\ 
{\rm (2)} \, $t=1$ if and only if $b=1$ $($then $r_0=a, c_0=0$$)$. \\
{\rm (3)} \, $r_{t-1}=1$ if and only if $a=b=1$ $($then $t=1$$)$. Equivalently, $r_{t-1} \ge 2$ if and only if $(a, b) \neq (1, 1)$. \\
{\rm (4)} \, For a sequence of integers $r_0, r_1, \ldots, r_{t-1}$ with $r_0 \ge 0, r_i \ge 1 \, (i=1, 2, \ldots, t-2)$ and $r_{t-1} \ge 2$ $($including the cases $t=1$ and $2$$)$,
there exists a unique pair of two positive coprime integers $a$ and $b$ such that $r_0, r_1, \cdots, r_{t-1}$ are the partial quotients of $a$ generated by $b$ 
$($then $(a, b) \neq (1, 1)$$)$. \\
{\rm (5)} \, $c_{i-2}$ and $c_{i-1} (i=0, 1, \ldots, t)$ are coprime. \\
{\rm (6)} \, A sequence $r_i, r_{i+1}, \cdots, r_{t-1} \, (i=0, 1, \ldots, t-1)$ is the partial quotients of $c_{i-2}$ generated by $c_{i-1}$ with length $t-i$.
\end{Lem}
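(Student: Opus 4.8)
The plan is to derive all six assertions from the single recursion $c_{i-2}=c_{i-1}r_i+c_i$ of (\ref{Euclid's algorithm}), together with the boundary data $c_{t-2}=1,\ c_{t-1}=0$ and the built-in inequalities $0\le c_i<c_{i-1}$. I would prove (5) first, since it underlies the rest: one step gives $\gcd(c_{i-2},c_{i-1})=\gcd(c_{i-1},c_i)$, and chaining this identity from $i=t$ down to $i=0$ collapses every consecutive gcd to $\gcd(c_{t-2},c_{t-1})=\gcd(1,0)=1$. Assertion (1) is then immediate from the first equation $a=br_0+c_0$ with $0\le c_0<b$, in which $r_0=0$ holds exactly when $a<b$.

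For (2), if $t=1$ then the boundary condition reads $c_{t-2}=c_{-1}=b=1$, forcing $b=1$; conversely $b=1$ makes the first step $a=1\cdot r_0+c_0$ terminate at once with $c_0=0$ and $r_0=a$, so $t=1$. For (3) I would rewrite the last equation as $c_{t-3}=1\cdot r_{t-1}+c_{t-1}=r_{t-1}$: when $t\ge 2$ the built-in inequality $c_{t-2}<c_{t-3}$ (the case $i=t-2$) together with $c_{t-2}=1$ yields $c_{t-3}\ge 2$, hence $r_{t-1}\ge 2$, while for $t=1$ one has $r_{t-1}=r_0=a$ and $b=1$, so $r_{t-1}=1$ is equivalent to $a=b=1$. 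Assertion (6) is the observation that, by (5), $c_{i-2}$ and $c_{i-1}$ are coprime, and the tail of the recursion for indices $\ge i$ is verbatim the algorithm (\ref{Euclid's algorithm}) applied to the pair $(c_{i-2},c_{i-1})$ with the same terminal data, so it returns the quotients $r_i,\dots,r_{t-1}$, a list of length $t-i$.

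The real content, and the main obstacle, is the existence half of (4). Given a list $r_0,\dots,r_{t-1}$ with $r_0\ge 0$, $r_i\ge 1\ (1\le i\le t-2)$ and $r_{t-1}\ge 2$, I would set $c_{t-1}=0,\ c_{t-2}=1$ and run the recursion $c_{i-2}=c_{i-1}r_i+c_i$ backwards to define $c_{t-3},\dots,c_{-1}=b,\ c_{-2}=a$. Unlike in the forward situation, the inequalities $0\le c_i<c_{i-1}$ are now not given and must be proved. I would establish, by downward induction, that $c_j\ge 1$ for $-1\le j\le t-2$ and that $c_{i-2}>c_{i-1}$ at every step; the one subtle point is that a backward step is non-strict only when $r_i=1$ and the remainder $c_i=0$, a coincidence that can occur solely at the top index $i=t-1$, where the hypothesis $r_{t-1}\ge 2$ restores strictness (the small cases $t=1,2$ I would verify by hand). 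This monotonicity is precisely $0\le c_i<c_{i-1}$, positivity of $a$ and $b$ makes the pair admissible, and coprimality follows from (5). Uniqueness is easier: the backward reconstruction is deterministic, so the list determines $(a,b)$; running the forward algorithm on this pair returns the original list, and $(a,b)\neq(1,1)$ by (3), which closes the loop.
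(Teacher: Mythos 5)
Your proposal is correct, and it in fact proves more than the paper does: the paper dismisses (1), (4), (5) and (6) with ``not hard to see'' and writes out only (2) and (3). For those two parts your arguments essentially coincide with the paper's --- for (2) you read the boundary condition directly as $c_{-1}=c_{t-2}=1$, where the paper instead notes $c_0=c_{t-1}=0$ forces $b \mid a$ and invokes coprimality, and for (3) you use $c_{t-3}=c_{t-2}r_{t-1}+c_{t-1}=r_{t-1}$ together with the strict decrease $c_{t-2}<c_{t-3}$ for $t\ge 2$, which is exactly the paper's argument stated contrapositively. The genuine added value is your treatment of (4), the one substantive claim the paper omits: you run the recursion backwards from $c_{t-1}=0$, $c_{t-2}=1$ and verify by downward induction the inequalities $0\le c_i<c_{i-1}$, which are hypotheses in the forward direction but must be proved in the backward one; you correctly isolate the single configuration where strictness could fail, $r_i=1$ with $c_i=0$, observe it can occur only at $i=t-1$ (since $c_j\ge 1$ for $j\le t-2$), and note that the normalization $r_{t-1}\ge 2$ excludes it there --- this is precisely the point that makes the normalization necessary for uniqueness, as otherwise $CF(\ldots,k)=CF(\ldots,k-1,1)$ would give two quotient lists for one pair. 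Your closing of the loop (backward determinism gives uniqueness of $(a,b)$; uniqueness of division with remainder under $0\le c_i<c_{i-1}$ makes the forward run reproduce the list; $(a,b)\neq(1,1)$ by (3)) is sound. Only two cosmetic slips: in (5) the gcd identity is available for $i=0,\ldots,t-1$, so the chain starts at the terminal pair $(c_{t-2},c_{t-1})$ rather than at ``$i=t$''; and in (6) one should remark $c_{i-1}\ge c_{t-2}=1>0$ so that $(c_{i-2},c_{i-1})$ is genuinely a pair of positive coprime integers to which Definition \ref{Def_Partial Quotients} applies. Neither affects correctness.
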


\begin{proof}
Since (1), (4), (5) and (6) are not hard to see, we do not give the proofs of them. We only prove (2) and (3). \\
(2) Suppose $t=1$. Then the partial quotients of $a$ generated by $b$ is $r_0$. Since $a$ and $b$ are coprime, we have $b=1$. 
Conversely, suppose $b=1$. Then the partial quotients of $a$ generated by $b$ is $r_0$, and $t=1$. \\
(3) Suppose $r_{t-1}=1$. Then we have $c_{t-3}=c_{t-2}=1$ and $c_{t-1}=0$. Since $c_{t-2}<c_{t-3}$ if $t \ge 2$, we have $t=1$, and $a=b=1$. 
Conversely, suppose $a=b=1$. Then the partial quotients of $a$ generated by $b$ is $r_0=1 \, (t=1)$.
\end{proof}

\begin{Def}
\label{FLAQHETDH}
{\rm
Under the situation in Lemma \ref{KRPFHWK} (4), for the sequence $r_0, r_1, \ldots, r_{t-1}$, 
a pair of integers $(a, b)$ is called the {\color{black}{\it realizing pair}} of the sequence. 
For the sequence $r_0=1$, the realizing pair of the sequence is $(1, 1)$.
}
\end{Def}

We generalize Lemma \ref{Lem_FRC-formula1}.

\begin{Lem}
\label{Lem_FRC_formula 2}
Let $a$ and $b$ be positive coprime integers, 
and $ r_0, r_1, \ldots, r_{t-1} $ the partial quotients of $a$ generated by $b$.
For any consecutive SDIs $[2^n:m]$ and $[2^n:m+1]$, \\
{\rm (1)} \, $a[2^n :m+1]+b[2^n :m]=[2^{N_1}:M_1]$, \,{\it where} 
\[
N_1=n+\sum_{i=0}^{t-1}r_i \,\,\,\,\, {\it and} \,\,\,\,\, M_1=2^{ \sum_{i=0}^{t-1}r_i}m+\sum_{j=0}^{t-1}(-1)^j2^{ \sum_{i=j}^{t-1}r_i}+(-1)^t.
\]
{\rm (2)} \, $b[2^n :m+1]+a[2^n :m]=[2^{N_2}:M_2]$, \,{\it where} 
\[
N_2=N_1=n+\sum_{i=0}^{t-1}r_i \,\,\,\,\, {\it and} \,\,\,\,\, 
M_2=
\left\{
\begin{split}
&  2^{ \sum_{i=0}^{t-1}r_i}m+\sum_{j=1}^{t-1}(-1)^{j-1}2^{ \sum_{i=j}^{t-1}r_i}+(-1)^{t-1} \,\,\,\,\, (t \ge 2), \\
&  2^{r_0}m+1 \,\,\,\,\, (t=1). 
\end{split}
\right.
\]
\end{Lem}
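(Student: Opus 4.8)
The plan is to induct on the length $t$ of the partial quotients, proving (1) and (2) simultaneously, since the inductive step will interchange the two assertions. For the base case $t=1$, Lemma \ref{KRPFHWK} (2) gives $b=1$ and $r_0=a$, so the two claims reduce directly to Lemma \ref{Lem_FRC-formula1} (1) and (2); one checks that the displayed expressions for $N_1,M_1$ and $N_2,M_2$ collapse to $[2^{n+a}:2^am+2^a-1]$ and $[2^{n+a}:2^am+1]$ respectively in this case.

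For the inductive step (so $t\ge 2$, whence $c_0\ge 1$) I would peel off the leading quotient $r_0$ using the first Euclidean relation $a=br_0+c_0$ from Definition \ref{Def_Partial Quotients}. Writing
\[
a[2^n:m+1]+b[2^n:m]=b\big(r_0[2^n:m+1]+[2^n:m]\big)+c_0[2^n:m+1],
\]
Lemma \ref{Lem_FRC-formula1} (1) identifies $r_0[2^n:m+1]+[2^n:m]=[2^{n'}:m']$ with $n'=n+r_0$ and $m'=2^{r_0}(m+1)-1$. The crucial observation is that the neighbouring SDI recovers the original datum: since $m'+1=2^{r_0}(m+1)$, iterating the relation $[2^{n+1}:2m]=[2^n:m]$ exactly $r_0$ times gives $[2^{n'}:m'+1]=[2^n:m+1]$. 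Hence the whole expression becomes $c_0[2^{n'}:m'+1]+b[2^{n'}:m']$, which is precisely the left-hand side of assertion (2) for the consecutive SDIs $[2^{n'}:m'],[2^{n'}:m'+1]$ and the pair $(b,c_0)$. By Lemma \ref{KRPFHWK} (5),(6) this pair is positive coprime with partial quotients $r_1,\dots,r_{t-1}$ of length $t-1$, so the inductive hypothesis applies. The symmetric manipulation, using the same relation $a=br_0+c_0$ together with Lemma \ref{Lem_FRC-formula1} (2) and $[2^{n'}:\tilde m]=[2^n:m]$ for $\tilde m=2^{r_0}m$, rewrites the left-hand side of (2) as $b[2^{n'}:\tilde m+1]+c_0[2^{n'}:\tilde m]$ and reduces assertion (2) for $(a,b)$ to assertion (1) for $(b,c_0)$. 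Thus (1) and (2) feed into each other with the length dropping by one at each step, which is why they must be carried together.

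It then remains to confirm that the depth and order returned by the hypothesis agree with the claimed $N_1,M_1$ (resp. $N_2,M_2$). The depth matches immediately, since $n'+\sum_{i=1}^{t-1}r_i=n+\sum_{i=0}^{t-1}r_i$. For the order one substitutes $m'=2^{r_0}(m+1)-1$ (resp. $\tilde m=2^{r_0}m$) into the length-$(t-1)$ formula and reindexes the alternating sum by $j\mapsto j+1$: the leading term $2^{\sum_{i=0}^{t-1}r_i}m$ appears, while the shift in sign and in exponent reproduces exactly the first two terms $2^{\sum_{i=0}^{t-1}r_i}-2^{\sum_{i=1}^{t-1}r_i}$ that split off from the target sum. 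I expect this reindexing of the signed, geometric-type sum, together with tracking the $t=1$ versus $t\ge 2$ case split in $M_2$ (the length-$(t-1)$ hypothesis lands in the $t'=1$ branch precisely when $t=2$), to be the only delicate point; everything else follows from the two reductions above.
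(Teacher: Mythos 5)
Your proposal is correct and follows essentially the same route as the paper's proof: the paper likewise inducts on $t$, settles $t=1$ by Lemma \ref{Lem_FRC-formula1}, and uses $a=br_0+c_0$ together with Lemma \ref{Lem_FRC-formula1} (1) (resp.\ (2)) and the doubling relation $[2^{n+1}:2m]=[2^n:m]$ to reduce assertion (1) for $(a,b)$ to assertion (2) for $(b,c_0)$ with $n'=n+r_0$, $m'=2^{r_0}m+2^{r_0}-1$ (resp.\ (2) to (1) with $\tilde m=2^{r_0}m$), then checks $M_2'=M_1$ and $M_1'=M_2$ by exactly your substitution-and-reindexing computation. Your added observations --- that $c_0\ge 1$ when $t\ge 2$, and that the length-$(t-1)$ hypothesis falls into its $t'=1$ branch precisely when $t=2$ --- are details the paper leaves implicit.
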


\begin{proof}
We prove by induction on $t$. 
If $t=1$, then $b=1$. By Lemma \ref{Lem_FRC-formula1}, we have the result. 
Suppose that $t \ge 2$, and that the statement holds for any two positive coprime integers $a'$ and $b'$ such that 
the length of the partial quotients of $a'$ generated by $b'$ is less than or equal to $t-1$.
If we particularly take $a'=c_{-1}(=b)$ and $b'=c_0$, 
then the partial quotients of $a'$ generated by $b'$ is $r_1, r_2, \ldots, r_{t-1}$ with length $t-1$,
and we have, by the assumption,    
\begin{enumerate}
{\rm 
\item $b[2^{n'} :m'+1]+c_0[2^{n'} :m']=[2^{N_1'}:{M_1}\!']$, \, where 
\[
N_1'=n'+\sum_{i=1}^{t-1}r_i \,\,\,\,\, {\rm and} \,\,\,\,\, {M_1}\!'=2^{ \sum_{i=1}^{t-1}r_i}m'+\sum_{j=1}^{t-1}(-1)^{j-1}2^{ \sum_{i=j}^{t-1}r_i}+(-1)^{t-1}.
\]
\item $c_0[2^{n'} :m'+1]+b[2^{n'} :m']=[2^{N_2'}:{M_2}\!']$, \, where 
\[
N_2'=N_1'=n'+\sum_{i=1}^{t-1}r_i \,\,\,\,\, {\rm and} \,\,\,\,\, 
{M_2}\!'=
\left\{
\begin{split}
&   2^{ \sum_{i=1}^{t-1}r_i}m'+\sum_{j=2}^{t-1}(-1)^j2^{ \sum_{i=j}^{t-1}r_i}+(-1)^t \,\,\,\,\, (t \ge 3), \\
&  2^{r_1}m'+1 \,\,\,\,\, (t=2). 
\end{split}
\right.
\]
}
\end{enumerate}
Suppose $a$ and $b$ are positive coprime integers, and that the length of the partial quotients of $a$ generated by $b$ is equal to $t$.   
Note that $a=b r_0+c_0$.
\[
\begin{split}
(1) \,\,\, a[2^n :m+1]+b[2^n :m]=&\big (b r_0+c_0 \big )[2^n :m+1]+b[2^n :m] \qquad \qquad \qquad \quad \\[3pt]
=&c_0[2^n :m+1]+b \big (\, r_0[2^n :m+1]+[2^n:m] \, \big ) \\[3pt]
=&c_0[2^{n+r_0} :2^{r_0}m+2^{r_0}]+b[2^{n+r_0} :2^{r_0}m+2^{r_0}-1].
\end{split}
\]
We set $n'=n+r_0$ and $m'=2^{r_0}m+2^{r_0}-1$. Then,  
$
a[2^n :m+1]+b[2^n :m]\!=\![2^{N_2'}:{M_2}\!'],\\
\text{where}
$
\[
N_2'=n+\sum_{i=0}^{t-1}r_i=N_1 \,\,\,\,\, {\rm and} \,\,\,\,\, {M_2}\!'=2^{ \sum_{i=0}^{t-1}r_i}m+\sum_{j=0}^{t-1}(-1)^j2^{ \sum_{i=j}^{t-1}r_i}+(-1)^t=M_1.
\]
\[
\begin{split}
(2) \,\,\, b[2^n :m+1]+a[2^n :m]= &b[2^n :m+1]+\big (b r_0+c_0 \big )[2^n :m] \qquad \qquad \qquad \quad \\[3pt]
=&b \big ( \, [2^n :m+1]+r_0[2^n:m] \, \big )+c_0[2^n :m] \\[3pt]
=&b[2^{n+r_0} :2^{r_0}m+1]+c_0[2^{n+r_0} :2^{r_0}m].
\end{split}
\]
We set $n'=n+r_0$ and $m'=2^{r_0}m$. Then we have $b[2^n :m+1]+a[2^n :m]=[2^{N_1'}:{M_1}\!']$, where
\[
N_1'=n+\sum_{i=0}^{t-1}r_i=N_2 \,\,\,\,\, {\rm and} \,\,\,\,\, {M_1}\!'=2^{ \sum_{i=0}^{t-1}r_i}m+\sum_{j=1}^{t-1}(-1)^{j-1}2^{ \sum_{i=j}^{t-1}r_i}+(-1)^{t-1}=M_2.
\]
Therefore we have the result.
\end{proof}

%Here, we generalize the SDT by the following definition.

%\begin{figure}[!h]
%\begin{Def}
%\label{Def_SDF}
%{\rm For two non-negative integers $m$ and $n$ with $0 \le m \le 2^n$, by using an infinite number of variables $X(2^n:m)$, 
%We define the following table.} \\[-30pt]
%\end{Def}
%\[
%\UseTips
%\newdir{ >}{!/-5pt/\dir{>}}
%\xymatrix @=0.15pc @*[c]
%{{X(2^0\!:\!0)} \ar@{-}[rrrrrrrr] \ar@{->}[ddd] & & & & \ar@{->}[ddd] & & & & X(2^0\!:\!1) \ar@{->}[ddd] \\
%\vspace{2\baselineskip} \\
%& & & & & & & & \\
%X(2^1\!:\!0) \ar@{-}[rrrr] \ar@{->}[ddd] & & \ar@{->}[ddd] & & X(2^1\!:\!1) \ar@{-}[rrrr] \ar@{->}[ddd] & & \ar@{->}[ddd] & & X(2^1\!:\!2) \ar@{->}[ddd]\\
%\vspace{2\baselineskip} \\
%& & & & & & & & \\
%X(2^2\!:\!0) \ar@{-}[rr] \ar@{->}[ddd] & \ar@{->}[ddd] & X(2^2\!:\!1) \ar@{-}[rr] \ar@{->}[ddd] & \ar@{->}[ddd] & X(2^2\!:\!2) \ar@{->}[ddd] \ar@{-}[rr] \ar@{->}[ddd] & \ar@{->}[ddd] & X(2^2\!:\!3) \ar@{-}[rr] \ar@{->}[ddd] & \ar@{->}[ddd] & X(2^2\!:\!4) \ar@{->}[ddd] \\
%\vspace{2\baselineskip} \\
%& & & & & & & & \\
%X(2^3\!:\!0) & X(2^3\!:\!1) & X(2^3\!:\!2) & X(2^3\!:\!3) & X(2^3\!:\!4) & X(2^3\!:\!5) & X(2^3\!:\!6) & X(2^3\!:\!7)& X(2^3\!:\!8) }
%\]
%\begin{center}
%\small{\qquad Fig. 2-2}
%\end{center}
%We call this table the {\it Stern's diatomic frame} (SDF, for short).
%By substituting other objects to the variables of SDF, we obtain another table.
%We will deal with several kinds of tables at the same time.
%\end{figure}

\section{Design of Stern's diatomic integer}

In this section, we define a notion of {\it design}, and
we correspond it to an SDI.

\begin{Def} 
{\rm
For two non-negative integers $m$ and $n$ such that $0 \le m \le 2^n$, 
we introduce a symbol $\{m\}_n$,
and call it the {\it design} of $m$ with length $n$,
and $m$ the {\it design number} of $\{m\}_n$. 
If $0 \le m \le 2^n-1$, we call $\{m\}_n$ a {\it finite design} as a general term.
If $m=2^n$, we call $\{m\}_n$ the {\it terminal design} with length $n$. \\ 
(1) If $\{m\}_n$ is a finite design with $n \ge 1$, 
by using a finite dyadic word $d_1d_2 \cdots d_n$, where $d_i=0$ or $1 (i=1, 2, \ldots, n)$ 
and $m=\sum_{i=1}^{n}2^{n-i}d_i$ (i.e. regarding the finite dyadic word as a binary number), 
we denote $\{m\}_n=d_1d_2 \cdots d_n$.
If $\{m\}_n$ is a terminal design with $n \ge 1$,
we denote $\{ 2^n \}_{\tiny n}=1{\underbrace{0 \cdots 00}_{\tiny n}}{}_{\tau}$, 
where $\underbrace{ \cdots }_{\tiny k}$ implies that the number of words is $k$. \\
(2) In the case that $n=0$,
we define $\{0\}_{\tiny 0}=\varepsilon$ and $\{ 2^0 \}_{\tiny 0}=1_{\tau}$, 
where $\varepsilon$ implies the empty word,
and specially call $\{0\}_{\tiny 0}$ the {\it empty design} with length 0.
$\{ 2^0 \}_{\tiny 0}$ is the {\it terminal design} with length $0$. 

We can denote finite designs by
\[
\{m\}_n=
          \underbrace{1 \cdots 11}_{\tiny k_0}
          \underbrace{0 \cdots 00}_{\tiny k_1}
          \cdots 
          \underbrace{0 \cdots 00}_{\tiny k_{l-2}}
          \underbrace{1 \cdots 11}_{\tiny k_{l-1}},
\]
where $l$ is odd, $k_0 \ge 0, k_i \ge 1 \, (i=1, 2, \ldots, l-2), \, k_{l-1} \ge 0$ and $n=\sum_{i=0}^{l-1}k_i$.
Then we have $m=\sum_{j=0}^{l-1}(-1)^j2^{ \sum_{i=j}^{l-1}k_i}-1$. 
We note that if $l=1$ and $k_0=0$, the right-hand side of the equation denotes $\varepsilon \, (=\{0\}_0)$. 

We regard $\{m\}_n$ as the place in the SDT. That is, we denote
$
[\{m\}_n]=[2^n:m],
$ 
and call $\{m\}_n$ the {\it design} of $[2^n:m]$. 
Then Fig.2-1 is represented as Fig.3-1. 
We emphasize that $\{m\}_n$ has information of the dyadic word presentation of $m$ with length $n$
and the place in the SDT.
When the length $n$ is clear by context, we sometimes denote $\{m\}_n$ by $\{m\}$ or $m$ simply.
We define an {\it infinite design} in Section 6. \\[-30pt]
}
\end{Def}

\begin{figure}[!h]
\begin{center}
\[
\UseTips
\newdir{ >}{!/-5pt/\dir{>}}
\xymatrix @=0.15pc @*[c]
{\underset{(0)}{[\,\varepsilon \,]} \ar@{-}[rrrrrrrr] \ar@{->}[ddd] & & & & \ar@{->}[ddd] & & & & \underset{(1)}{[1_{\tau}]} \ar@{->}[ddd] \\
\vspace{2\baselineskip} \\
& & & & & & & & \\
\underset{(0)}{[0]} \ar@{-}[rrrr] \ar@{->}[ddd] & & \ar@{->}[ddd] & & \underset{(1)}{[1]} \ar@{-}[rrrr] \ar@{->}[ddd] & & \ar@{->}[ddd] & & \underset{(1)}{[10_{\tau}]} \ar@{->}[ddd]\\
\vspace{2\baselineskip} \\
& & & & & & & & \\
\underset{(0)}{[00]} \ar@{-}[rr] \ar@{->}[ddd] & \ar@{->}[ddd] & \underset{(1)}{[01]} \ar@{-}[rr] \ar@{->}[ddd] & \ar@{->}[ddd] & \underset{(1)}{[10]} \ar@{->}[ddd] \ar@{-}[rr] \ar@{->}[ddd] & \ar@{->}[ddd] & \underset{(2)}{[11]} \ar@{-}[rr] \ar@{->}[ddd] & \ar@{->}[ddd] & \underset{(1)}{[100_{\tau}]} \ar@{->}[ddd] \\
\vspace{2\baselineskip} \\
& & & & & & & & \\
\underset{(0)}{[000]} & \underset{(1)}{[001]} & \underset{(1)}{[010]} & \underset{(2)}{[011]} & \underset{(1)}{[100]} & \underset{(3)}{[101]} & \underset{(2)}{[110]} & \underset{(3)}{[111]} & \underset{(1)}{[1000_{\tau}]} }
\]
\small{Fig. 3-1} 
\end{center}
\end{figure}

\medskip

\begin{Exm} \label{JDTAB} 
\[
\begin{array}{l l l l l}
\{0\}_{\tiny 0}=\varepsilon,  &\{1\}_{\tiny 0}=1_{\tau}, & \{0 \}_{\tiny 1}=0, & \{1 \}_{\tiny 1}=1, & \{2 \}_{\tiny 1}=10_{\tau}, \\
\{0 \}_{\tiny 2}=00, &\{1 \}_{\tiny 2}=01,& \{2 \}_{\tiny 2}=10, & \{3 \}_{\tiny 2}=11, & \{4 \}_{\tiny 2}=100_{\tau}.
\end{array}
\]
\end{Exm}

We need the following lemma. However, we do not give the proofs of them because they are easy to see.

\begin{Lem} \label{ZZWHOPN}
For a finite design 
$
\{m\}_n=
          \underbrace{1 \cdots 11}_{\tiny k_0}
          \underbrace{0 \cdots 00}_{\tiny k_1}
          \cdots 
          \underbrace{0 \cdots 00}_{\tiny k_{l-2}}
          \underbrace{1 \cdots 11}_{\tiny k_{l-1}},
$ \\
{\rm (1)} \, $m$ is odd if and only if $k_{l-1} \ge 1$. \\
{\rm (2)} \, $2^{n-1} \le m \le 2^n-1$ if and only if $k_0 \ge 1$. \\
{\rm (3)} \, $m \equiv 1 \, (mod 4)$ if and only if $k_{l-1}=1$, and $ m \equiv 3 \, (mod 4)$ if and only if $k_{l-1} \ge 2$.
\end{Lem}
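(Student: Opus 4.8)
The plan is to read the relevant binary digits of the word $d_1 d_2 \cdots d_n$ directly from the block decomposition, using that $m = \sum_{i=1}^n 2^{n-i} d_i$, so that $d_1$ is the most significant and $d_n$ the least significant digit. Since $l$ is odd, the even-indexed blocks (indices $0, 2, \ldots, l-1$) consist of $1$'s and the odd-indexed blocks (indices $1, 3, \ldots, l-2$) consist of $0$'s; in particular both the leading block (index $0$, length $k_0$) and the trailing block (index $l-1$, length $k_{l-1}$) are blocks of $1$'s. Each of the three assertions then reduces to inspecting at most the two outermost digits of the word.

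For (1), I note that $m$ is odd exactly when $d_n = 1$, and $d_n$ is the final digit of the trailing block of $k_{l-1}$ ones; hence $d_n = 1$ iff $k_{l-1} \ge 1$, while if $k_{l-1} = 0$ the word ends inside the block of $k_{l-2} \ge 1$ zeros (or is empty), forcing $d_n = 0$. For (2), since $\{m\}_n$ is a finite design we always have $0 \le m \le 2^n - 1$, so $2^{n-1} \le m \le 2^n-1$ is equivalent to $d_1 = 1$; and $d_1$, being the first digit of the leading block of $k_0$ ones, equals $1$ iff $k_0 \ge 1$, whereas if $k_0 = 0$ the leading digit lies in the following zero-block, giving $d_1 = 0$ and $m \le 2^{n-1}-1$. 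For (3), assuming $m$ odd so that $k_{l-1} \ge 1$ and $d_n = 1$ by (1), the residue of $m$ modulo $4$ is governed by $2 d_{n-1} + 1$: when $k_{l-1} = 1$ the digit $d_{n-1}$ lies in the preceding zero-block, so $d_{n-1} = 0$ and $m \equiv 1 \pmod{4}$, whereas when $k_{l-1} \ge 2$ the digit $d_{n-1}$ is still a $1$ of the trailing block, so $d_{n-1}=1$ and $m \equiv 3 \pmod{4}$.

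I expect the only delicate point, a matter of bookkeeping rather than of difficulty, to be the degenerate configurations $l = 1$, $k_0 = 0$, or $k_{l-1} = 0$, in which one or both extremal blocks are empty (and in the extreme case $l = 1$, $k_0 = 0$ the word itself is the empty design $\varepsilon$). For these I would check separately that the alternating-block convention still names the correct outermost digit, and that short words such as $n = 1$ are handled by reading any missing higher digit as $0$. Beyond this case-checking there is no real obstacle, since everything reduces to elementary inspection of the binary expansion of $m$.
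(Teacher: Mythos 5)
Your proof is correct, and it is exactly the routine binary-digit inspection the paper has in mind: the paper explicitly omits the proof of Lemma \ref{ZZWHOPN} as ``easy to see,'' so there is no authorial argument to diverge from. Your handling of the degenerate configurations ($l=1$, $k_0=0$, $k_{l-1}=0$, the empty design, and the $n=1$ case in (3) via a missing digit read as $0$) covers precisely the points where care is needed, and the two case implications in (3) combine with (1) to yield both stated equivalences since the cases are mutually exclusive and exhaustive.
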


\begin{Def} \label{HIJJYQWJS}
{\rm
For a finite design 
$
\{m\}_n=
          \underbrace{1 \cdots 11}_{\tiny k_0}
          \underbrace{0 \cdots 00}_{\tiny k_1}
          \cdots 
          \underbrace{0 \cdots 00}_{\tiny k_{l-2}}
          \underbrace{1 \cdots 11}_{\tiny k_{l-1}},
$
we call $\{m\}_n$ a {\color{black}{\it reduced design}} if it satisfies (1) in Lemma \ref{ZZWHOPN} (i.e. $m$ is odd), 
and $\{m\}_n$ a {\color{black}{\it primitive design}} if it satisfies (1) and (2) in Lemma \ref{ZZWHOPN}. 
Additionally, we consider that $\{0\}_0 \, (=\varepsilon)$ and $\{1\}_0 \, (=1_{\tau})$ are reduced designs. 
}
\end{Def}

\begin{Def} \label{SJEFSJQR}
{\rm 
Let $\{m\}_n$ be a design with length $n$.
Then, by using $\{m\}_n$, we denote other designs with length $n$ by the following rules: \\
(1) \, For an integer $m'$ with $0 \le m' \le 2^n-m$, 
\[
\{m\}_n+m'=\{m\}_n+\{m'\}_n=\{m+m'\}_n. 
\]
(2) \, For an integer $m'$ with $0 \le m' \le m$, 
\[
\{m\}_n-m'=\{m\}_n-\{m'\}_n=\{m-m'\}_n. 
\]
(3) \, $2^n-\{m\}_n=\{2^n\}_n-\{m\}_n=\{2^n-m\}_n$, and call it the {\it conjugate design} of $\{m\}_n$. 

For example, if 
$
\{m\}_n=
          \underbrace{1 \cdots 11}_{\tiny k_0}
          \underbrace{0 \cdots 00}_{\tiny k_1}
          \cdots 
          \underbrace{0 \cdots 00}_{\tiny k_{l-2}}
          \underbrace{1 \cdots 11}_{\tiny k_{l-1}}
$
is a finite reduced design with length $n \ge 1$, then we have
\[
2^n-\{m\}_n=
          \underbrace{0 \cdots 00}_{\tiny k_0}
          \underbrace{1 \cdots 11}_{\tiny k_1}
          \cdots 
          \underbrace{1 \cdots 11}_{\tiny k_{l-2}}
          \underbrace{0 \cdots 01}_{\tiny k_{l-1}}.
\]
If $\{2^n\}_n$ is a terminal design with length $n \ge 1$, then we have 
$
2^n-\{2^n\}_n=\{0\}_n= \underbrace{0 \cdots 00}_{\tiny n}.
$
In particular, the conjugate design of $\{1\}_0$ is the empty design, 
and conversely the conjugate design of the empty design is $2^0-\{0\}_0=1_{\tau}.$
}
\end{Def}

\begin{Def} \label{GSIEVGTHPA}
{\rm
Let $a$ and $b$ be two positive coprime integers,
and $ r_0, r_1, r_2, \ldots, r_{t-1} $ the partial quotients of $a$ generated by $b$.
Then we define the following reduced design $D$:

\[
D=
\left\{
\begin{split}
&  \qquad \qquad \qquad \,\,\,\, 1 \qquad \qquad \qquad \,\,\,\,\,\,\,\, (\,(a, b)=(1, 1)\,), \\
&  \underbrace{1 \cdots 11}_{\tiny r_0}
    \underbrace{0 \cdots 00}_{\tiny r_1}
    \cdots
    \underbrace{0 \cdots 00}_{\tiny r_{t-2}}
    \underbrace{1 \cdots 11}_{\tiny r_{t-1}} \,\,\,\,\, (\,(a, b)\neq(1, 1), \,\, t\!:\!odd \,), \\
&  \underbrace{1 \cdots 11}_{\tiny r_0}
    \underbrace{0 \cdots 00}_{\tiny r_1}
    \cdots
    \underbrace{1 \cdots 11}_{\tiny r_{t-2}}
    \underbrace{0 \cdots 01}_{\tiny r_{t-1}} \,\,\,\,\, (\,(a, b)\neq(1, 1), \,\, t\!:\!even \,). 
\end{split}
\right.
\]
We call $D$ the {\it Euclidean design} of $a$ generated by $b$.
If $D=\{m\}_n$, then we call $m$ the {\it design number} of $a$ generated by $b$, where $n=\sum_{i=0}^{t-1}r_i$.
In particular, if $a=b=1$, then $D=\{1\}_1$.
We note that
by Lemma \ref{KRPFHWK} (3), if $(a, b) \neq (1, 1)$, then $r_{t-1} \ge 2$ and  $m$ is an odd integer with $1 \le m \le 2^n-1$.
By Lemma \ref{ZZWHOPN} (3), $m \equiv 3 \, (mod 4)$ if $t$ is odd, and $m \equiv 1 \, (mod 4)$ if $t$ is even. 
}
\end{Def}

\begin{Lem} \label{DEWHTQXXJ}
Under the situation in Definition \ref{GSIEVGTHPA}, we have
\[
m=\sum_{j=0}^{t-1}(-1)^j2^{\sum_{i=j}^{t-1}r_i}+(-1)^t \,\,\,\,\,\, \text{and} \,\,\,\,\,\, 2^n-m=\sum_{j=1}^{t-1}(-1)^{j-1}2^{\sum_{i=j}^{t-1}r_i}+(-1)^{t-1}.
\]
\end{Lem}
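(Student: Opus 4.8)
The plan is to reduce everything to the general design-number formula already recorded in the definition of design, namely that a finite design with block lengths $k_0,\dots,k_{l-1}$ ($l$ odd, first and last blocks being runs of $1$'s) has design number $m=\sum_{j=0}^{l-1}(-1)^j2^{\sum_{i=j}^{l-1}k_i}-1$. I would establish the formula for $m$ first, splitting on the parity of $t$, and then obtain the formula for $2^n-m$ by a one-line algebraic manipulation rather than a fresh induction.

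First, suppose $t$ is odd (this case also absorbs $(a,b)=(1,1)$, where $t=1$ and $D=\{1\}_1$). Then by Definition \ref{GSIEVGTHPA} the Euclidean design $D$ is literally in canonical block form with $l=t$ and $k_i=r_i$, the terminal block $\underbrace{1\cdots11}_{r_{t-1}}$ being a genuine run of $1$'s. Applying the general formula verbatim gives $m=\sum_{j=0}^{t-1}(-1)^j2^{\sum_{i=j}^{t-1}r_i}-1$, and since $(-1)^t=-1$ for odd $t$, the trailing $-1$ is exactly $(-1)^t$, which is the claim.

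The substantive case is $t$ even, where $D$ ends in $\underbrace{0\cdots01}_{r_{t-1}}$ and so is not yet in canonical form. Here I would re-decompose the terminal segment as a run of $r_{t-1}-1$ zeros followed by a single $1$; since $(a,b)\neq(1,1)$ forces $r_{t-1}\ge2$ by Lemma \ref{KRPFHWK} (3), this yields a legitimate canonical form with $l=t+1$ (odd), with $k_i=r_i$ for $0\le i\le t-2$, $k_{t-1}=r_{t-1}-1$, and $k_t=1$, whose total length is still $n=\sum_{i=0}^{t-1}r_i$. The bookkeeping step is to observe that for every $j\le t-1$ the tail sum collapses, $\sum_{i=j}^{t}k_i=\sum_{i=j}^{t-1}r_i$, while the extra $j=t$ term contributes $(-1)^t2^{k_t}=2$; combined with the $-1$ from the general formula this produces $+1=(-1)^t$ for even $t$, again matching the claim. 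I expect this re-decomposition, together with the accompanying collapse of the exponent sums, to be the only non-routine part of the argument.

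Finally, the formula for $2^n-m$ needs no separate induction. In the expression just obtained, the $j=0$ summand is $2^{\sum_{i=0}^{t-1}r_i}=2^n$, so peeling it off gives $m=2^n+\sum_{j=1}^{t-1}(-1)^j2^{\sum_{i=j}^{t-1}r_i}+(-1)^t$. Subtracting from $2^n$ and flipping signs via $(-1)^{j+1}=(-1)^{j-1}$ and $(-1)^{t+1}=(-1)^{t-1}$ yields $2^n-m=\sum_{j=1}^{t-1}(-1)^{j-1}2^{\sum_{i=j}^{t-1}r_i}+(-1)^{t-1}$, as required. Alternatively one could read this second identity off the conjugate-design description in Definition \ref{SJEFSJQR}, but the direct manipulation is cleaner.
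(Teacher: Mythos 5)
Your proposal is correct, and it matches the route the paper intends: the paper states Lemma \ref{DEWHTQXXJ} without proof, evidently viewing it as a direct specialization of the design-number formula $m=\sum_{j=0}^{l-1}(-1)^j2^{\sum_{i=j}^{l-1}k_i}-1$ recorded in the definition of design in Section 3, which is precisely the reduction you carry out. Your two checks --- that for odd $t$ (absorbing $(a,b)=(1,1)$) the Euclidean design is already in canonical block form with $l=t$ and $k_i=r_i$, and that for even $t$ the terminal block $\underbrace{0\cdots01}_{r_{t-1}}$ re-splits into $r_{t-1}-1\ge 1$ zeros plus a single $1$ (legitimate since $r_{t-1}\ge 2$ by Lemma \ref{KRPFHWK} (3)), with the tail sums collapsing to $\sum_{i=j}^{t-1}r_i$ and the extra term $(-1)^t2^{k_t}=2$ merging with the $-1$ into $(-1)^t$ --- together with the sign-flip derivation of the $2^n-m$ identity from the $j=0$ term, supply exactly the verification the paper leaves implicit.
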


The following is the first main theorem of this paper.
Theorem says that the set of pairs of two positive coprime integers has one to one correspondence 
with the set of two positive integers $(n, m)$ such that $m$ is odd and $1 \le m \le 2^n-1$ via design representations.
We call the theorem the Design Representation Theorem.

\begin{Thm}{\rm (Design Representation Theorem)}
\label{Thm_FRC_Order} \\
{\rm (1)} \, Let $a$ and $b$ be positive coprime integers, and $\{m\}_n$ the Euclidean design of $a$ generated by $b$.
Then we have $a=[\{m\}_n]=[2^n:m]$ and $b=[2^n-\{m\}_n]=[2^n:2^n-m]$.
In particular, if $(a, b)=(1, 1)$, then $(m, n)=(1, 1)$. \\
{\rm (2)} \, Let $m$ and $n$ be two positive integers such that $m$ is an odd integer with $1 \le m \le 2^n-1$ and $(m, n) \neq (1, 1)$.
We suppose
$
\{m\}_n=  
    \underbrace{1 \cdots 11}_{\tiny k_0}
    \underbrace{0 \cdots 00}_{\tiny k_1}
    \cdots
    \underbrace{0 \cdots 00}_{\tiny k_{l-2}}
    \underbrace{1 \cdots 11}_{\tiny k_{l-1}},  
$
where $l$ is odd, $k_0 \ge 0$ and $k_i \ge 1 \, (i=1, 2, \ldots, l-1)$. \\
{\rm (i)} \, If $m \equiv 3 \, (mod 4) \, (i.e. \, k_{l-1} \ge 2)$, then by taking $t=l$ and $r_i=k_i \, (i=0, 1, \ldots, t-1)$,
we have the realizing pair $(a, b)$ of $r_0, r_1, \ldots, r_{t-1}$, and $\{m\}_n$ is the Euclidean design of $a$ generated by $b$. \\
{\rm (ii)} \, If $m \equiv 1 \, (mod 4) \, (i.e. \, k_{l-1}=1)$, then by taking $t=l-1$, $r_i=k_i \, (i=0, 1, \ldots, t-2=l-3)$ and $r_{t-1}=k_{l-2}+1$,
we have the realizing pair $(a, b)$ of $r_0, r_1, \ldots, r_{t-1}$, and $\{m\}_n$ is the Euclidean design of $a$ generated by $b$.
\end{Thm}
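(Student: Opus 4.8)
The plan is to obtain both directions from the generalized recursion of Lemma~\ref{Lem_FRC_formula 2}, specialized to the trivial base pair $[2^0:0]=0$ and $[2^0:1]=1$, combined with the explicit expressions for $m$ and $2^n-m$ recorded in Lemma~\ref{DEWHTQXXJ}. For part (2) I would run this correspondence backwards, using the realizing pair from Lemma~\ref{KRPFHWK}(4) and then verifying, block by block, that the Euclidean design of that pair is the prescribed $\{m\}_n$.

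For part (1), let $r_0,\dots,r_{t-1}$ be the partial quotients of $a$ generated by $b$ and $\{m\}_n$ the Euclidean design, so $n=\sum_{i=0}^{t-1}r_i$. First I would apply Lemma~\ref{Lem_FRC_formula 2} with base order and depth both $0$. Since $[2^0:1]=1$ and $[2^0:0]=0$, part (1) of that lemma gives $a=a[2^0:1]+b[2^0:0]=[2^{N_1}:M_1]$ and part (2) gives $b=b[2^0:1]+a[2^0:0]=[2^{N_2}:M_2]$, with $N_1=N_2=\sum_{i=0}^{t-1}r_i=n$. Setting the base order to $0$ collapses $M_1$ and $M_2$ to exactly the two alternating sums in Lemma~\ref{DEWHTQXXJ}, whence $M_1=m$ and $M_2=2^n-m$; this yields $a=[2^n:m]$ and $b=[2^n:2^n-m]$ simultaneously. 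The degenerate case $(a,b)=(1,1)$ has $r_0=1$, $t=1$, so by Definition~\ref{GSIEVGTHPA} the design is $\{1\}_1$, giving $(m,n)=(1,1)$ (equivalently, the same specialization produces $M_1=2^{r_0}-1=1$ and $n=r_0=1$).

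For part (2), given $(m,n)$ with $m$ odd, $1\le m\le 2^n-1$ and $(m,n)\ne(1,1)$, I would write $\{m\}_n$ in its block form with odd $l$, $k_0\ge0$, $k_i\ge1$. As $m$ is odd, Lemma~\ref{ZZWHOPN}(1) gives $k_{l-1}\ge1$, and Lemma~\ref{ZZWHOPN}(3) splits the argument by $m\bmod 4$. In case (i) ($m\equiv3$, i.e. $k_{l-1}\ge2$) I set $t=l$ and $r_i=k_i$; then $r_0\ge0$, $r_i\ge1$ for $1\le i\le t-2$, and $r_{t-1}\ge2$, so Lemma~\ref{KRPFHWK}(4) furnishes a unique realizing pair $(a,b)$ (Definition~\ref{FLAQHETDH}) with $(a,b)\ne(1,1)$. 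Since $t=l$ is odd, the odd-$t$ branch of Definition~\ref{GSIEVGTHPA} writes the Euclidean design of $(a,b)$ with blocks $r_0,\dots,r_{t-1}$ in the pattern $1\cdots0\cdots1$, which is literally $\{m\}_n$. In case (ii) ($m\equiv1$, i.e. $k_{l-1}=1$) the hypothesis $(m,n)\ne(1,1)$ rules out $l=1$, so $l\ge3$ and $k_{l-2}\ge1$; I set $t=l-1$, $r_i=k_i$ for $i\le t-2$ and $r_{t-1}=k_{l-2}+1\ge2$, again obtaining a unique realizing pair via Lemma~\ref{KRPFHWK}(4).

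The step I expect to be the main obstacle is verifying, in case (ii), that the Euclidean design of the realizing pair really coincides with $\{m\}_n$. Here $t=l-1$ is even, so Definition~\ref{GSIEVGTHPA} invokes the even-$t$ branch, whose final block has the special form $\underbrace{0\cdots01}_{r_{t-1}}$ rather than a pure run; the plan is to check the string identity directly. The final block expands to $\underbrace{0\cdots0}_{k_{l-2}}1$ of length $k_{l-2}+1$, which matches the concatenation of the last two blocks $\underbrace{0\cdots0}_{k_{l-2}}\underbrace{1}_{k_{l-1}=1}$ of $\{m\}_n$, while the preceding blocks agree termwise with $r_i=k_i$; a length count $\sum_{i=0}^{t-1}r_i=\sum_{i=0}^{l-2}k_i+1=n$ confirms the depths agree, so the two dyadic words are equal. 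The remaining bookkeeping—the $l=1$, $t=1$ boundary cases and the observation that $r_{t-1}\ge2$ always excludes $(a,b)=(1,1)$—is routine.
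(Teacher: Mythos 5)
Your proposal is correct and takes essentially the same route as the paper: part (1) is exactly the paper's argument (Lemma \ref{Lem_FRC_formula 2} applied to the base pair $[2^0:0]=0$, $[2^0:1]=1$, with Lemma \ref{DEWHTQXXJ} identifying $M_1=m$ and $M_2=2^n-m$), and part (2) rests on the same realizing pair from Lemma \ref{KRPFHWK}(4). The only difference is in the final verification of (2), where the paper cites the alternating-sum identity of Lemma \ref{DEWHTQXXJ} while you check the dyadic-word identity block by block; your explicit matching of the even-$t$ terminal block $\underbrace{0\cdots01}_{k_{l-2}+1}$ against $\underbrace{0\cdots0}_{k_{l-2}}1$ in case (ii), together with the length count, makes precise a step the paper leaves implicit.
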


\begin{proof}
(1) It is easy to see that if $(a, b)=(1, 1)$, then $(m, n)=(1, 1)$. From now on, we suppose $(a, b) \neq (1, 1)$.
Since $[2^0:0]=0$ and $[2^0:1]=1$,
\[
a=a[2^0:1]+b[2^0:0]=[\{m\}_n]=[2^n : m] \\
\]
and
\[
b=b[2^0:1]+a[2^0:0]=[2^n-\{m\}_n]=[2^n : 2^n-m]
\]
by Lemma \ref{Lem_FRC_formula 2} and Lemma \ref{DEWHTQXXJ}. \\
(2) Let $(a, b)$ be the realizing pair of $r_0, r_1, \ldots, r_{t-1}$.
Since $(m, n) \neq (1, 1)$, we have $(a, b) \neq (1, 1)$. Then we have 
\[
m=\sum_{j=0}^{t-1}(-1)^j2^{\sum_{i=j}^{t-1}r_i}+(-1)^t,
\]
and $\{m\}_n$ is the Euclidean design of $a$ generated by $b$ by Lemma \ref{DEWHTQXXJ}.
\end{proof}

Next, we characterize $(a, b)$ corresponding to a primitive design.

\begin{Thm} \label{HQPJJTSKB}
Let $a$ and $b$ be positive coprime integers with $a \ge b$, and $b'=r a+b$ for a positive integer $r$.
Let $D$ and $D'$ be the Euclidean designs of $a$ generated by $b$ and $b'$, respectively.
Then $D$ is a primitive design, $D'= \underbrace{0 \cdots 00}_{\tiny r}D$, and the design numbers of $D$ and $D'$ are the same.
\end{Thm}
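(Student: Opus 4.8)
The plan is to reduce the whole statement to a single structural identity, namely $D' = \underbrace{0\cdots0}_{r}D$, and then read off the other two assertions from it. First I would note that prepending $r$ zeros to a finite dyadic word leaves the binary number it represents unchanged, so if $D=\{m\}_n$ then $\underbrace{0\cdots0}_r D=\{m\}_{n+r}$; this immediately gives that $D$ and $D'$ have the same design number $m$. For primitivity of $D$ I would argue by cases on $a\ge b$: if $(a,b)=(1,1)$ then $D=\{1\}_1=1$ is primitive by inspection, while if $a>b$ then Lemma \ref{KRPFHWK} (1) gives $r_0\ge 1$ and Lemma \ref{KRPFHWK} (3) gives $r_{t-1}\ge 2$, so by Definition \ref{GSIEVGTHPA} the word $D$ begins with a $1$ and ends with a $1$; thus $k_0\ge 1$ and $m$ is odd, and $D$ is primitive by Lemma \ref{ZZWHOPN} (1),(2). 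Everything therefore rests on the structural identity.

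To establish the identity I would compare the partial quotients of $a$ generated by $b'$ with those of $a$ generated by $b$. Let $r_0,r_1,\ldots,r_{t-1}$ be the partial quotients of $a$ generated by $b$, with Euclidean remainders $c_{-2}=a,\ c_{-1}=b,\ c_0,\ldots$. Assume first $a>b$, so $0\le b<a$. Since $b'=ra+b\ge a+1>a$, Lemma \ref{KRPFHWK} (1) forces the first partial quotient of $a$ generated by $b'$ to be $r'_0=0$ with remainder $c'_0=a$; the next Euclidean step reads $b'=ra+b$ with $0\le b<a$, so $r'_1=r$ and $c'_1=b$. Now the remainder pair $(c'_0,c'_1)=(a,b)$ is exactly the starting pair for $a$ generated by $b$, so Lemma \ref{KRPFHWK} (6) shows the tail $r'_2,r'_3,\ldots$ equals the partial-quotient sequence $r_0,r_1,\ldots,r_{t-1}$. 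Hence $t'=t+2$ and $(r'_0,r'_1,r'_2,\ldots,r'_{t+1})=(0,r,r_0,r_1,\ldots,r_{t-1})$; in particular $t'$ has the same parity as $t$.

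Feeding this into Definition \ref{GSIEVGTHPA} finishes the identity. Since $t'=t+2$ has the same parity as $t$, both $D$ and $D'$ are built by the same (odd or even) branch of the definition. Writing out that branch for $D'$ and peeling off its first two blocks — the empty block of length $r'_0=0$ and the $0$-block of length $r'_1=r$ — the surviving blocks, indexed $2$ through $t+1$, have lengths $r_0,\ldots,r_{t-1}$ and (because the shift $j\mapsto j+2$ preserves parity and sends the terminal block of $D$ to the terminal block of $D'$) occupy the same interior-versus-terminal positions as in the defining expression of $D$; hence they coincide with $D$ digit for digit. The first two peeled blocks contribute the empty word and $r$ zeros, so $D'=\underbrace{0\cdots0}_r D$. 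The remaining case $a=b$ forces $a=b=1$ by coprimality, and I would handle it by a direct run: for $a=1$ generated by $b'=r+1$ the algorithm gives partial quotients $(0,r+1)$, so $D'=\underbrace{0\cdots0}_r 1=\underbrace{0\cdots0}_r D$, and the identity persists.

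The only genuine obstacle is the exceptional pair $(1,1)$: there $b$ is not strictly less than $a$, so the clean second-step computation ``$(r'_1,c'_1)=(r,b)$'' breaks (the remainder must be $<a=1$) and the neat pattern $(0,r,r_0,\ldots)$ degenerates to $(0,r+1)$. This is why I would isolate $(1,1)$ and verify the design identity for it directly rather than through the general remainder argument; elsewhere the proof is a straight application of Lemma \ref{KRPFHWK} (6) together with the parity bookkeeping that guarantees digit types are preserved under the index shift $j\mapsto j+2$.
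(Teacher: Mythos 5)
Your proof is correct and follows essentially the same route as the paper's: both rest on the observation that the partial quotients of $a$ generated by $b'$ are $0, r$ followed by the partial quotients of $a$ generated by $b$, after which $D'=\underbrace{0\cdots 0}_{r}D$, the shared design number, and primitivity (via Lemma \ref{KRPFHWK} (1), (3) and Lemma \ref{ZZWHOPN}) all follow from Definition \ref{GSIEVGTHPA}. If anything, your version is slightly more careful than the paper's, since the blanket claim about the quotient sequence fails literally in the degenerate case $(a,b)=(1,1)$ — there the quotients of $1$ generated by $r+1$ are $(0,r+1)$, not $(0,r,1)$, because the remainder condition $c_i<c_{i-1}$ rules out the latter — and you rightly isolate that case and verify the identity $D'=\underbrace{0\cdots 0}_{r}1$ directly.
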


\begin{proof}
We set $D=d_1d_2 \cdots d_n$, where $d_i=0$ or $1 \, (i=0, 1, \ldots, n)$. By Lemma \ref{KRPFHWK} (1) and Lemma \ref{ZZWHOPN} (1), $D$ is primitive.
Let $r_0, r_1, \ldots, r_{t-1}$ be the partial quotients of $a$ generated by $b$.
Then the partial quotients of $a$ generated by $b'$ are $0, r, r_0, r_1, \ldots, r_{t-1}$.
Therefore $D'= \underbrace{0 \cdots 00}_{\tiny r}D$, and the design numbers of $D$ and $D'$ are the same.  
\end{proof}

\begin{Cor}
\label{Cor_The Number of Odd Design}
A positive integer $a$ can be represented $\varphi(a)$ distinct ways exactly by primitive designs, where $\varphi(a)$ is the Euler's function.
\end{Cor}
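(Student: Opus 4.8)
The plan is to combine the bijection furnished by the Design Representation Theorem (Theorem \ref{Thm_FRC_Order}) with the primitivity criterion implicit in Theorem \ref{HQPJJTSKB}, and then reduce the statement to counting coprime residues. First I would record what it means for a primitive design $D$ to ``represent'' $a$: by Theorem \ref{Thm_FRC_Order}(1) the Euclidean design $\{m\}_n$ of a coprime pair $(a,b)$ satisfies $[\{m\}_n]=[2^n:m]=a$, so representing $a$ means $[D]=a$. Conversely, Theorem \ref{Thm_FRC_Order}(2) shows that every design $\{m\}_n$ with $m$ odd and $1\le m\le 2^n-1$ arises as the Euclidean design of a unique realizing pair $(a,b)$. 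Hence such designs are in bijection with coprime pairs $(a,b)$, and under this bijection the value $[D]$ equals the first coordinate $a$.

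Next I would pin down which of these designs are primitive in terms of $(a,b)$. By Definition \ref{HIJJYQWJS} a reduced design is primitive exactly when $k_0\ge 1$ (Lemma \ref{ZZWHOPN}(2)); in the Euclidean design one has $k_0=r_0$, and by Lemma \ref{KRPFHWK}(1) the inequality $r_0\ge 1$ holds if and only if $a\ge b$. Therefore the bijection of the previous step restricts to a bijection between primitive designs and coprime pairs $(a,b)$ with $a\ge b\ge 1$; this is precisely what Theorem \ref{HQPJJTSKB} already guarantees, namely that the Euclidean design of $a$ generated by $b$ is primitive when $a\ge b$. I would also check the degenerate pair $(a,b)=(1,1)$, whose Euclidean design is $\{1\}_1=1$, a primitive design representing $a=1$, so the correspondence extends consistently to this boundary case.

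Finally I would carry out the enumeration. Fixing $a$, the primitive designs representing $a$ correspond bijectively to integers $b$ with $1\le b\le a$ and $\gcd(a,b)=1$. The number of such $b$ is exactly $\varphi(a)$: by definition $\varphi(a)=|\{\,b:1\le b\le a,\ \gcd(a,b)=1\,\}|$, and for $a\ge 2$ the endpoint $b=a$ is automatically excluded since $\gcd(a,a)=a$, while for $a=1$ the single admissible value $b=1$ matches $\varphi(1)=1$. This yields the claimed count.

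I expect no serious obstacle, since the corollary is essentially a repackaging of the two theorems; the only points demanding care are verifying that the primitivity condition $k_0\ge 1$ translates cleanly into the Euclidean-algorithm inequality $a\ge b$ (so that no primitive design is omitted or double-counted) and handling the boundary pair $(1,1)$ in accordance with the convention that $\{1\}_1$ is primitive.
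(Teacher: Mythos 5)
Your proposal is correct and follows essentially the same route as the paper: the paper's one-line proof invokes Theorem \ref{HQPJJTSKB} to characterize primitive design representations of $a$ by integers $b$ coprime to $a$ with $a \ge b$, which is exactly your counting argument. Your write-up merely unpacks the underlying bijection (Theorem \ref{Thm_FRC_Order} together with the equivalence $k_0 \ge 1 \Leftrightarrow a \ge b$ from Lemmas \ref{KRPFHWK}(1) and \ref{ZZWHOPN}(2)) and checks the boundary pair $(1,1)$, details the paper leaves implicit.
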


\begin{proof}
By Theorem \ref{HQPJJTSKB}, a primitive design representation is characterized by a positive integer $b$ coprime to $a$ with $a \ge b$. 
\end{proof}

\begin{Exm} \label{TDKDCW}
{\rm We determine the primitive designs such that $12=[2^n : m]=[\{m\}_n]$.
12 has four natural numbers which are prime to 12 and not greater than 12, namely $1, 5, 7$ and $11$. 
$111111111111=\{4095\}_{12}$, $110011=\{51\}_6$, $101101=\{45\}_6$ and $100000000001=\{2049\}_{12}$
are the Euclidean designs of 12 generated by 1, 5, 7 and 11 respectively. 
Hence, we have $[2^{12}:4095]=12,$ $[2^6:51]=12,\, [2^6:45]=12$ and $[2^{12}:2049]=12$.} 
\end{Exm}

\section{Continued fraction presentation of SDI}

In this section, we give a continued fraction presentation for an SDI
via the design presentation of the SDI (Theorem \ref{Lem_FRC_Product2}).
For continued fractions, see \cite{Hardy}.

\begin{Def}
\label{Def_Special Matrix}
{\rm 
The {\it continuant} $[x_0, x_1, \ldots, x_{l-1}]$ with length $l \, (\ge 1)$ \\ is a polynomial with $l$-variables $x_0, x_1, \ldots, x_{l-1}$
defined by the following recursive relations:
\begin{align}
&[x_0]=x_0, \,\, [x_0, x_1]=x_0x_1+1, \notag \\
&[x_0, x_1, \ldots, x_{l-3}, x_{l-2}, x_{l-1}]=[x_0, x_1, \ldots, x_{l-3}, x_{l-2}]\, x_{l-1}+[x_0, x_1, \ldots, x_{l-3}]. \tag{4:1} \label{SXSAWJTDDJ} 
\end{align}
As a special case, for the length $0$, we define $[\varepsilon]=1$.
Then the recursive relation (\ref{SXSAWJTDDJ}) also holds for the case $l=2$.
}
\end{Def}

%\begin{Exm} 
%\text{} \\
%$[x_0, x_1, x_2]=x_0x_1x_2+x_0+x_2, \,\,\,
%[x_0, x_1, x_2, x_3]=x_0 x_1 x_2x_3+x_0x_1+x_2x_3+x_0x_3+1$.
%\end{Exm}

\begin{Lem}
\label{Lem_Special Matrix}
For the continuant, we have the following formulas.
Basically we assume $l \ge 1$.
\[
\begin{split}
&(1) \,\,\, [x_0, x_1,x_2,\ldots,x_{l-1}]
=\left|
\begin{matrix}
              x_0 \!\!\!\!& 1     \!\!\!\!&        \!\!\!\!&             \!\!\!\!& {\Large \text{$0$}} \\
              -1  \!\!\!\!& x_1  \!\!\!\!& \! 1      \!\!\!\!&             \!\!\!\!&   \\
                  \!\!\!\!& -1    \!\!\!\!& \! x_2    \!\!\!\!& \ddots            &   \\
                  \!\!\!\!&        \!\!\!\!&  \ddots        & \ddots      & 1 \\
{\Large \text{$0$}} \!\!\!\!&      \!\!\!\!&        \!\!\!\!& -1          \!\!\!\!&\! x_{l-1} 
\end{matrix}
\right|. \qquad \qquad \qquad \qquad \qquad \qquad \qquad \qquad 
\end{split}
\]
\[
\begin{split}
&(2) \,\,\, [x_0, x_1, x_2, \ldots, x_{l-1}]=[x_{l-1}, x_{l-2}, x_{l-3}, \ldots, x_0]. \\
&(3) \,\,\, [x_0, x_1, x_2, \ldots, x_{l-1}]=x_0[x_1, x_2, \ldots, x_{l-1}]+[x_2, \ldots, x_{l-1}] \,\,\,\, (l \ge 2).  \qquad \qquad \qquad \qquad \qquad \qquad \\
&(4) \,\,\, [x_0, x_1, x_2, \ldots, x_{l-1}, 1]=[x_0, x_1, x_2, \ldots, x_{l-1}+1]. \\
&(5) \,\,\, [1, x_0, x_1, x_2, \ldots, x_{l-1}]=[1+x_0, x_1, x_2, \ldots, x_{l-1}]. \\
&(6) \,\,\, [x_0, x_1, x_2, \ldots, x_{l-1}, 0]=[x_0, x_1, x_2, \ldots, x_{l-2}]. \\
&(7) \,\,\, [0, x_0, x_1, x_2, \ldots, x_{l-1}]=[x_1, x_2, \ldots, x_{l-1}]. \\
&(8) \,\,\, [x_0, x_1, \ldots, x_{l-2}, x_{l-1}+x]=[x_0, x_1, \ldots, x_{l-2}]x+[x_0, x_1, \ldots, x_{l-2}, x_{l-1}]. 
\end{split}
\]
\end{Lem}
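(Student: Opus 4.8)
The plan is to prove the determinant formula (1) first, since once the continuant is identified with that tridiagonal determinant the remaining seven identities reduce to short consequences of the definition, of the determinant, or of the reversal symmetry (2).

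First I would prove (1) by induction on $l$. The base cases $l=1,2$ are the direct evaluations $\det(x_0)=x_0=[x_0]$ and the $2\times2$ determinant equal to $x_0x_1+1=[x_0,x_1]$, with the convention $[\varepsilon]=1$ serving as the $l=0$ anchor. For the inductive step, write $M_l$ for the tridiagonal matrix and expand $\det M_l$ along its last row, whose only nonzero entries are the sub-diagonal $-1$ at position $(l,l-1)$ and the diagonal entry $x_{l-1}$ at $(l,l)$. The cofactor of $x_{l-1}$ is $\det M_{l-1}$; the cofactor of the $-1$, after deleting row $l$ and column $l-1$ and expanding the remaining matrix along its now single-entry last column (the super-diagonal $1$), contributes $+\det M_{l-2}$. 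Collecting terms gives $\det M_l=x_{l-1}\det M_{l-1}+\det M_{l-2}$, which is exactly the defining recursion (\ref{SXSAWJTDDJ}). The main obstacle is precisely this sign bookkeeping: one must combine the cofactor signs $(-1)^{i+j}$ with the $-1$ sitting on the sub-diagonal so that the $\det M_{l-2}$ term finally appears with a plus sign rather than a minus.

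With (1) in hand I would obtain the reversal symmetry (2) directly from the determinant. The matrix computing $[x_{l-1},\dots,x_0]$ is obtained from $M_l$ by reversing the order of the diagonal entries; conjugating $M_l$ by the reversal permutation matrix reverses the diagonal and interchanges the two off-diagonals, and a further conjugation by $\mathrm{diag}(1,-1,1,\dots)$ restores the sign pattern ($+1$ above, $-1$ below). Since conjugation preserves the determinant, (2) follows. Equivalently, one may note that the value of a tridiagonal determinant depends only on its diagonal entries and on the products of opposite off-diagonal pairs, each of which equals $1\cdot(-1)=-1$ and is unchanged under reversal. For (3) I would instead expand the determinant in (1) along its first row, producing the front recursion $[x_0,\dots,x_{l-1}]=x_0[x_1,\dots,x_{l-1}]+[x_2,\dots,x_{l-1}]$ for $l\ge2$; alternatively, apply the back recursion (\ref{SXSAWJTDDJ}) to the reversed continuant and invoke (2).

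The remaining identities are then immediate. Using only the recursion (\ref{SXSAWJTDDJ}) I would read off (6) as $[x_0,\dots,x_{l-1},0]=[x_0,\dots,x_{l-1}]\cdot0+[x_0,\dots,x_{l-2}]$, and (8) as $[x_0,\dots,x_{l-2},x_{l-1}+x]=[x_0,\dots,x_{l-2}](x_{l-1}+x)+[x_0,\dots,x_{l-3}]$, which regroups to $[x_0,\dots,x_{l-2}]x+[x_0,\dots,x_{l-1}]$; identity (4) is then the case $x=1$, since $[x_0,\dots,x_{l-2}]+[x_0,\dots,x_{l-1}]=[x_0,\dots,x_{l-1},1]$ by the same recursion. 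Finally, (5) and (7) are the front-end mirror images of (4) and (6): applying the reversal symmetry (2) turns $[1,x_0,\dots,x_{l-1}]$ into $[x_{l-1},\dots,x_0,1]$ and $[0,x_0,\dots,x_{l-1}]$ into $[x_{l-1},\dots,x_0,0]$, to which (4) and (6) apply before reversing back. Throughout I would verify the shortest lengths separately against the convention $[\varepsilon]=1$; apart from the sign tracking in (1), every step is a one-line substitution.
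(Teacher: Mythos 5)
Your proposal is correct and follows essentially the same route as the paper: prove the tridiagonal-determinant formula (1) by induction via expansion along the last row, recovering the defining recursion (4:1), deduce the reversal symmetry (2) from (1), and obtain (3)--(8) as short consequences. The paper compresses all of (2)--(8) into ``by (1) and (2)''; you merely supply the details it omits (the cofactor sign bookkeeping, the conjugation by the reversal permutation and $\mathrm{diag}(1,-1,1,\dots)$ for (2), and the direct use of the recursion for (4), (6), (8)), all of which check out, including the boundary cases handled by the convention $[\varepsilon]=1$.
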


\begin{proof}
(1) We prove by induction on $l$. We set the right-hand side as $\Delta_l$.
It is easy to see $\Delta_1=x_0=[x_0]$ and $\Delta_2=x_0x_1+1=[x_0, x_1]$. 
Suppose that $l \ge 2$, 
and that the statement holds for the cases $l-1$ and $l$.
We show the case $l+1$. 
By expanding on the $(l+1)$-th row of $\Delta_{l+1}$, we have $\Delta_{l+1}=\Delta_lx_l+\Delta_{l-1}$. 
By the recursive relation of the continuant and the assumption, we have $\Delta_{l+1}=[x_0, x_1, x_2, \cdots, x_l]$. \\
(2) By (1), we have the result. \\
(3)-(8) By (1) and (2), we have the results.
\end{proof}

We represent SDIs by using the continuant.
In a continued fraction, we allow computations
$1/\infty=0$ and $1/0=\infty$.

\begin{Thm}
\label{Thm_FRC_Matrix}
For any SDI $[2^n : m]$ with $0 \le m \le 2^n$, 
we denote the design of $[2^n : m]$ by
\[
\{m\}_n=
%\left\{
%\begin{split}
%&  
\underbrace{1 \cdots 11}_{\tiny k_0}
    \underbrace{0 \cdots 00}_{\tiny k_1}
    \cdots 
    \underbrace{0 \cdots 00}_{\tiny k_{l-2}}
    \underbrace{1 \cdots 11}_{\tiny k_{l-1}},
%\quad (m \neq 2^n).
%\\
%&  \underbrace{1 \cdots 11}_{\tiny k_0}
%    \underbrace{0 \cdots 00}_{\tiny k_1}
%    \cdots 
%    \underbrace{0 \cdots 00}_{\tiny k_{l-2}}
%    \underbrace{1 \cdots 11}_{\tiny k_{l-1}}{}_{\tau} \quad (m=2^n), 
%\end{split}
%\right.
\]
where $l$ is odd, $k_0 \ge 0, k_i \ge 1 \,\,\,(i=1, 2, \ldots, l-2)$ and $k_{l-1} \ge 0$.
Then we have
\[
[2^n : m]=[k_0, k_1, k_2, \ldots, k_{l-1}]. 
\]
In particular, for the case $k_{l-1}=0$ (i.e. $m$ is even), if $l \ge 3$, we have 
\[
[2^n : m]=[k_0, k_1, k_2, \ldots, k_{l-3}].
\]
%and if $l=1$, then $\{m\}_n=\{0\}_0=\varepsilon$, and $[2^n : m]=[2^0 : 0]=0$. 
\end{Thm}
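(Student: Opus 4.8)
The plan is to deduce the theorem from the Design Representation Theorem (Theorem \ref{Thm_FRC_Order}) together with the recursion for continuants, after isolating the essential case in which $m$ is odd (a reduced design). The conceptual heart is the classical fact that the numerator produced by the Euclidean algorithm is exactly the continuant of its partial quotients; everything else is bookkeeping with the degenerate runs $k_0=0$ and $k_{l-1}=0$, which I will dispose of using the continuant identities of Lemma \ref{Lem_Special Matrix} and the three elementary SDI relations following Definition \ref{Def_SDA}, namely $[2^{n+1}:2m]=[2^n:m]$, $[2^{n+1}:m]=[2^n:m]$, and $[2^n:2m]=[2^n:m]$.

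First I would establish the auxiliary identity: if $r_0,r_1,\dots,r_{t-1}$ are the partial quotients of two positive coprime integers $a$ and $b$ in the sense of Definition \ref{Def_Partial Quotients}, then $a=[r_0,r_1,\dots,r_{t-1}]$ and $b=[r_1,\dots,r_{t-1}]$ as continuants. I would argue by induction on $t$. For $t=1$ we have $b=1$ and $a=r_0=[r_0]$, with $[r_1,\dots,r_{t-1}]=[\varepsilon]=1=b$. For $t\ge 2$, Lemma \ref{KRPFHWK}(5),(6) guarantee that $r_1,\dots,r_{t-1}$ are the partial quotients of $b\,(=c_{-1})$ generated by $c_0$, with $b,c_0$ coprime; the induction hypothesis gives $b=[r_1,\dots,r_{t-1}]$ and $c_0=[r_2,\dots,r_{t-1}]$, and the Euclidean relation $a=r_0 b+c_0$ together with the continuant recursion Lemma \ref{Lem_Special Matrix}(3) yields $a=r_0[r_1,\dots,r_{t-1}]+[r_2,\dots,r_{t-1}]=[r_0,r_1,\dots,r_{t-1}]$. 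Note that this argument is uniform in the value of $r_0$: when $r_0=0$ (the case $a<b$, coming from a leading-zero design $k_0=0$) the identity Lemma \ref{Lem_Special Matrix}(3) still applies verbatim, so no separate treatment of leading zeros is needed.

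Next I would treat the reduced case, i.e.\ $m$ odd with $(m,n)\neq(1,1)$, so $k_{l-1}\ge 1$. By Theorem \ref{Thm_FRC_Order}, $[2^n:m]=a$, where $(a,b)$ is the realizing pair whose partial quotients are read off from the runs: in case (i) ($k_{l-1}\ge 2$) we have $t=l$ and $r_i=k_i$, so the auxiliary identity gives at once $[2^n:m]=a=[k_0,\dots,k_{l-1}]$; in case (ii) ($k_{l-1}=1$) we have $t=l-1$, $r_i=k_i$ for $i\le l-3$ and $r_{t-1}=k_{l-2}+1$, so the auxiliary identity gives $[2^n:m]=a=[k_0,\dots,k_{l-3},k_{l-2}+1]$, which equals $[k_0,\dots,k_{l-2},1]=[k_0,\dots,k_{l-1}]$ by the continuant identity Lemma \ref{Lem_Special Matrix}(4) absorbing the terminal $1$. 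The small exceptional designs — $(m,n)=(1,1)$, the terminal and empty designs, and $m=0$ — I would verify by hand against $[1]=1$ and $[0]=0$, using Lemma \ref{Lem_Special Matrix}(6) to collapse the trailing zeros in their run decompositions.

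Finally, for even $m$ (so $k_{l-1}=0$), writing $m=2^{k_{l-2}}m''$ with $m''$ odd, the SDI relations show $[2^n:m]=[2^{n-k_{l-2}}:m'']$, and $\{m''\}_{n-k_{l-2}}$ is precisely the reduced design with runs $k_0,\dots,k_{l-3}$; the reduced case just proved gives $[2^n:m]=[k_0,\dots,k_{l-3}]$, while Lemma \ref{Lem_Special Matrix}(6) gives $[k_0,\dots,k_{l-2},0]=[k_0,\dots,k_{l-3}]$, so $[2^n:m]=[k_0,\dots,k_{l-1}]$. The ``in particular'' statement is then exactly the same application of Lemma \ref{Lem_Special Matrix}(6). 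The main obstacle I anticipate is not any single deep step but pinning down the run-length/partial-quotient dictionary correctly across the parity cases (i) and (ii), and checking that the continuant identities (4) and (6) mirror the degeneracies $k_{l-1}=1$ and $k_{l-1}=0$ exactly; once that dictionary is verified, the auxiliary identity carries all the real content.
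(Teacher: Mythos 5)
Your proposal is correct, and its overall skeleton matches the paper's: both reduce the odd (reduced) case to the Design Representation Theorem (Theorem \ref{Thm_FRC_Order}), handle $k_{l-1}=1$ by absorbing the terminal $1$ with Lemma \ref{Lem_Special Matrix} (4), handle even $m$ via trailing zeros and Lemma \ref{Lem_Special Matrix} (6), and check the terminal/empty designs by hand. Where you genuinely diverge is at the heart of the argument, the identity $a=[r_0,r_1,\ldots,r_{t-1}]$ for the partial quotients of $a$ generated by $b$: the paper obtains this by writing the Euclidean algorithm $a_i=a_{i+1}r_i+a_{i+2}$ as a tridiagonal linear system and applying Cramer's rule together with the determinant formula of Lemma \ref{Lem_Special Matrix} (1), whereas you prove it (together with the companion statement $b=[r_1,\ldots,r_{t-1}]$, which you correctly strengthen the induction to carry) by induction on the length $t$, using Lemma \ref{KRPFHWK} (5), (6) to identify the tail of the algorithm and the recursion Lemma \ref{Lem_Special Matrix} (3) for the inductive step; your observation that this is uniform in $r_0$, so that $k_0=0$ needs no special treatment, is accurate. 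Your route is more elementary and self-contained (no determinants or Cramer's rule beyond the continuant recursion itself), at the cost of needing the two-statement induction hypothesis; the paper's route gets the identity in one computation and makes visible the link between the continuant determinant and the linear system encoding the Euclidean algorithm. A further small improvement in your version: by proving the reduced case first and then disposing of even $m$ by stripping $k_{l-2}$ trailing zeros with the relation $[2^{n+1}:2m]=[2^n:m]$, you avoid the paper's induction on $s$ (with $l=2s-1$) in its case (1), which as written quietly invokes the odd case for the shorter design $\{2^{-k_{l-2}}m\}_{n-k_{l-2}}$ (whose last run $k_{l-3}$ of $1$s is nonempty) before that case has been established, so your ordering is logically cleaner.
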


\begin{proof} 
Firstly, we show $[2^n : 2^n] \, (n=0, 1, 2, \ldots)$. 
Then $[2^n : 2^n]$ has a terminal design $\{2^n\}_n=1\underbrace{0 \cdots 00}_{\tiny n}{}_{\tau}$ \, (i.e. $k_0=1, k_1=n, k_2=0$).
Hence we have 
\[
[2^n : 2^n]=1=[1, n, 0]=[k_0, k_1, k_2].
\]

Secondly, we show $[2^n : m] \, (0 \le m \le 2^n-1)$. 
Suppose $l=1$. Then we have $k_0=n, m=2^n-1$ and $[2^n : m]=[2^n : 2^n-1]=n=[k_0]$ by Lemma \ref{Lem_FRC-formula1} (1). 
Therefore the statement holds. If $(m, n)=(1, 1)$, then we have $\{m\}_n=\{1\}_1=1$. Hence the case is included by the case $l=1$.
From now on, we assume $(m, n) \neq (1, 1)$. \\
(1) \, The case $k_{l-1}=0$. 
We set $l=2s-1 \, (s=1, 2, \ldots)$. Only this case, we show by induction on $s$.
The case $l=1$ has already shown.
Next, we assume that the statement holds for $l \le 2s-1$, and we set $l=2s+1 \ge 3$.
Then by the assumption, we have $[2^n : m]=[2^{n-k_{l-2}} : 2^{-k_{l-2}}m]=[k_0, k_1, \cdots, k_{l-3}]$.
By Lemma \ref{Lem_Special Matrix} (6), we have $[2^n : m]=[k_0, k_1, \ldots, k_{l-1}]$. 

For the rest cases, we do not use induction. For a design $\{m\}_n$, 
we take a sequence $r_0, r_1, \ldots, r_{t-1}$ and the realizing pair $(a, b)$ of the sequence as in Theorem \ref{Thm_FRC_Order} (2).  
Then we have $a=[2^n : m]$. 
We set the Euclidean algorithm: $a_i=a_{i+1}r_i+a_{i+2}$ \, $(i=0, 1, \ldots, t-1)$,
where $a_0=a, a_1=b, a_t=1$ and $a_{t+1}=0$.
We give a matrix presentation of these relations as follows:
\begin{equation} \label{Euclid's algorithm Matrix 1} \notag
\left(
\begin{matrix}
                          & -1                      &\!  r_0             &\! 1                    &\!               &\!\!              &\!\!\! {\Large \text{0}}         \\
                          &                          &\!  -1              &\! r_1                  &\! 1            &\!\!              &\!\!\!                                 \\
                          &                          &\!                   &\! \ddots             &\! \ddots     &\!\! \ddots    &\!\!\!                                 \\
                          &                          &\!                   &\!                       &\! -1           &\!\! r_{t-3}     &\!\!\! 1                               \\
                          &                          &\!                   &\!                       &\!              &\!\! -1          &\!\!\! r_{t-2}                         \\
                          & {\Large \text{0}}  &\!                   &\!                       &\!              &\!\!               &\!\!\! -1                              \\
\end{matrix}
\right) 
\left(
\begin{matrix}
                   a_0     \\
                   a_1     \\
                   a_2 \\
                   \vdots \\
                   \vdots \\
                   a_{t-1} 
\end{matrix}
\right)\!=\!
\left(
\begin{matrix}
                   0     \\
                  \vdots \\
                  \vdots \\
                   0     \\
                   -1     \\
                   -r_{t-1} 
\end{matrix}
\right). 
\end{equation}
By Cramer's rule and Lemma \ref{Lem_Special Matrix} (1), we have \\
\begin{equation} \notag
\begin{split}
a=a_0&=(-1)^t\left| \!\!\!\!\!
\begin{matrix}
                           & 0         &\!  r_0                    &\! 1                    &                   &\!\!             &\!\!\! {\Large \text{0}}         \\
                           & \vdots &\!  -1                     &\! r_1                  &1                 &\!\!             &\!\!\!          \\
                           & \vdots &\!                          &\! \ddots             & \ddots        &\!\! \ddots   &\!\!\!     \\
                           & 0         &\!                          &\!                       & -1              &\!\! r_{t-3}    &\!\!\! 1        \\
                           & -1       &\!                          &\!                       &                   &\!\! -1         &\!\!\! r_{t-2}  \\
                           & -r_{t-1}&\!{\Large \text{0}}    &\!                       &                   &\!\!             &\!\!\! -1        \\
\end{matrix}
\right| 
=\left| \!\!\!\!\!
\begin{matrix}
                           & r_0                  &\!   1            &\!                      &\!\!                 &\!\!\!              &\!\!\!\! {\Large \text{0}}         \\
                           & -1                   &\! r_1           &\! 1                    &\!\!                 &\!\!\!              &\!\!\!\!          \\
                           &                       &\! \ddots      &\! \ddots            &\!\! \ddots       &\!\!\!              &\!\!\!\!     \\
                           &                       &\!                &\! -1                  &\!\! r_{t-3}        &\!\!\! 1            &\!\!\!\!         \\
                           &                       &\!                &\!                      &\!\! -1             &\!\!\!r_{t-2}       &\!\!\!\! 1  \\
                           & {\Large \text{0}}&\!               &\!                      &\!\!                  &\!\!\!-1            &\!\!\!\!r_{t-1}        \\
\end{matrix}
\right| \\
&=[r_0, r_1, \ldots, r_{t-1}].
\end{split}
\end{equation}
(2) \, The case $k_{l-1}=1$. 
By taking $r_0, r_1, \ldots, r_{t-1}$ as in Theorem \ref{Thm_FRC_Order} (2) (ii), 
we have $t=l-1, r_i=k_i$ \, $(i=0, 1, \ldots, t-2=l-3)$ and $r_{t-1}=k_{l-2}+1$.
By Lemma \ref{Lem_Special Matrix} (4), we have
\[
a=[k_0, k_1, \ldots, k_{l-3}, k_{l-2}+1]=[k_0, k_1, \cdots, k_{l-3}, k_{l-2}, 1] \,\,\,(k_{l-1}=1).
\]
(3) \, The case $k_{l-1} \ge 2$. 
By taking $r_0, r_1, \ldots, r_{t-1}$ as in Theorem \ref{Thm_FRC_Order} (2) (i), 
we have $t=l, r_i=k_i$ \, $(i=0, 1, \ldots, t-1=l-1)$, and $a=[k_0, k_1, \ldots, k_{l-1}]$.
\end{proof}

\begin{Thm}
\label{Lem_4FRC_Matrix}
For any SDI $[2^n : m] \, (0 \le m \le 2^n-1)$, let $\{m\}_n$ be the design of $[2^n : m]$ such that
\[
\{m\}_n=
  \underbrace{1 \cdots 11}_{\tiny k_0}
  \underbrace{0 \cdots 00}_{\tiny k_1}
  \cdots
  \underbrace{0 \cdots 00}_{\tiny k_{l-2}}
  \underbrace{1 \cdots 11}_{\tiny k_{l-1}},
\]
where $l$ is odd, $k_0 \ge 0, k_i \ge 1\, (i=1, 2, \ldots, l-2)$ and $k_{l-1} \ge 0$. 
Then for $l \ge 3$, we have 
\[
\begin{split}
&(1)\,\,[2^n:m]=[k_0, k_1, \ldots,k_{l-1}]. \qquad\,\,\,\,\,\,(2)\,\, [2^n:2^n-m]=[k_1, k_2, \ldots,k_{l-1}]. \\
&(3)\,\,[2^n:m+1]=[k_0, k_1, \ldots,k_{l-2}]. \,\,\,\,\,\,\,\,(4)\,\,[2^n:2^n-(m+1)]=[k_1, k_2, \ldots,k_{l-2}].
\end{split}
\]
For $l=1$, we have $n=k_0, m=2^n-1. 
Then \, [2^n : m]=[2^n : 2^n-1]=n, [2^n : 2^n-m]=[2^n : 1]=1, [2^n : m+1]=[2^n : 2^n]=1$ and $[2^n : 2^n-(m+1)]=[2^n : 0]=0$.
\end{Thm}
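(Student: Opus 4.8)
The plan is to reduce everything to Theorem \ref{Thm_FRC_Matrix} together with the continuant identities of Lemma \ref{Lem_Special Matrix}. Part (1) is literally the content of Theorem \ref{Thm_FRC_Matrix}, so there is nothing new to prove there, and the case $l=1$ (where $n=k_0$ and $m=2^n-1$) is immediate from Lemma \ref{Lem_FRC-formula1}~(1) together with the relations $[2^n:1]=1$, $[2^n:2^n]=1$, $[2^n:0]=0$ recorded after Definition \ref{Def_SDA}. From now on I would assume $l\ge 3$ and treat (2), (3), (4); the only substantive work is to write the design of the relevant SDI in canonical run form and then collapse the resulting continuant.

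For (2) I would start from the conjugate design. By Definition \ref{SJEFSJQR}, when $\{m\}_n$ is reduced the conjugate $\{2^n-m\}_n$ equals $\underbrace{0\cdots0}_{k_0}\underbrace{1\cdots1}_{k_1}\cdots\underbrace{1\cdots1}_{k_{l-2}}\underbrace{0\cdots01}_{k_{l-1}}$, whose ones-first canonical reading has run vector $(0,k_0,k_1,\dots,k_{l-2},k_{l-1}-1,1)$. Applying Theorem \ref{Thm_FRC_Matrix} to this conjugate, then using Lemma \ref{Lem_Special Matrix}~(7) to erase the leading $0$ (which also deletes $k_0$) and Lemma \ref{Lem_Special Matrix}~(4) to merge the trailing $k_{l-1}-1,\,1$ back into $k_{l-1}$, I would obtain $[2^n:2^n-m]=[k_1,k_2,\dots,k_{l-1}]$, as claimed.

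For (3) I would compute the design of $m+1$ by binary increment. Adding $1$ to $m=\underbrace{1\cdots1}_{k_0}\cdots\underbrace{0\cdots0}_{k_{l-2}}\underbrace{1\cdots1}_{k_{l-1}}$ flips the final block of $k_{l-1}$ ones to zeros and turns the last $0$ of the $k_{l-2}$-block into a $1$, producing the run vector $(k_0,\dots,k_{l-3},k_{l-2}-1,1,k_{l-1})$ followed by an empty ones-run (since $m+1$ is then even). The even clause of Theorem \ref{Thm_FRC_Matrix} deletes the trailing zero-run and empty ones-run, leaving $[k_0,\dots,k_{l-3},k_{l-2}-1,1]$, and Lemma \ref{Lem_Special Matrix}~(4) merges $k_{l-2}-1,\,1$ into $k_{l-2}$, giving $[2^n:m+1]=[k_0,k_1,\dots,k_{l-2}]$.

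Finally I would obtain (4) for free from the determinant relation. Writing $P=[2^n:m]$, $Q=[2^n:2^n-m]$, $P'=[2^n:m+1]$ and $Q'=[2^n:2^n-(m+1)]$, Theorem \ref{Thm_det=1} gives $P'Q-PQ'=1$, while the Euler continuant identity (a standard identity following from the determinant form Lemma \ref{Lem_Special Matrix}~(1) by a short induction) gives, since $l$ is odd, $[k_0,\dots,k_{l-1}]\,[k_1,\dots,k_{l-2}]-[k_0,\dots,k_{l-2}]\,[k_1,\dots,k_{l-1}]=-1$; that is, $P\,[k_1,\dots,k_{l-2}]=P'Q-1=PQ'$, and since $P\neq 0$ this forces $Q'=[k_1,\dots,k_{l-2}]$. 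The main obstacle throughout is the boundary bookkeeping rather than the algebra: one must verify the degenerate configurations ($k_0=0$, $k_{l-1}=1$ versus $k_{l-1}\ge 2$, $k_{l-2}=1$, and the even case $k_{l-1}=0$ in which $m$ itself is even), checking in each that the canonical run form is read off correctly and that the collapsing identities (4), (6), (7) of Lemma \ref{Lem_Special Matrix} genuinely apply; once the run vectors are pinned down, every collapse is routine.
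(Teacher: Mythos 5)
Your proposal is correct in substance, and for parts (1)--(3) it is essentially the paper's own route: the paper likewise writes the designs of $2^n-m$ and $m+1$ explicitly in run form (splitting into the cases $k_{l-1}=0$ and $k_{l-1}\ge 1$) and then reruns the argument of Theorem \ref{Thm_FRC_Matrix} on those designs. Your variant --- cite Theorem \ref{Thm_FRC_Matrix} once as a black box and collapse the perturbed run vectors with Lemma \ref{Lem_Special Matrix} (4), (6), (7) --- is if anything more explicit than the paper's ``by the same way as the proof of Theorem \ref{Thm_FRC_Matrix}'', and the degenerate configurations you list ($k_0=0$, $k_{l-1}\in\{0,1\}$, $k_{l-2}=1$) do work out routinely as you claim. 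Part (4) is where you genuinely diverge: the paper obtains (4) by feeding the incremented design from (3) back into (2) (conjugate $\{m+1\}_n$ and collapse a third time), whereas you combine the determinant relation $P'Q-PQ'=1$ of Theorem \ref{Thm_det=1} with the Euler continuant identity $[k_0,\ldots,k_{l-1}]\,[k_1,\ldots,k_{l-2}]-[k_0,\ldots,k_{l-2}]\,[k_1,\ldots,k_{l-1}]=(-1)^l=-1$. The trade is real: it eliminates a third round of run-vector bookkeeping at the price of one standard identity, which does follow from Lemma \ref{Lem_Special Matrix} (1) by a short induction (or from the usual $2\times 2$ matrix-product representation of continuants), and your sign $(-1)^l=-1$ for $l$ odd is right, including the degenerate values $k_0=0$ and $k_{l-1}=0$.

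One boundary case of your (4) fails as written: $m=0$ is allowed by the hypotheses (its design $\{0\}_n$ has run vector $(0,n,0)$, so $l=3$ is in scope), and there $P=[2^n:0]=0$, so the concluding step ``since $P\neq 0$'' breaks down --- the identity $P\,[k_1,\ldots,k_{l-2}]=PQ'$ reads $0=0$ and determines nothing. The case is checked directly in one line, since $Q'=[2^n:2^n-1]=n=[k_1]$ by Lemma \ref{Lem_FRC-formula1} (1), but it must be split off explicitly; for every $m\ge 1$ one has $P=[2^n:m]\ge 1$ and your division is then legitimate. With that one-line patch the proof is complete.
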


\begin{proof} 
We only show the case $l \ge 3$. \\
(1) It is Theorem \ref{Thm_FRC_Matrix}. \\
(2) We have
\[
\{2^n-m\}_n=
\left\{
\begin{split}
&\underbrace{0 \cdots 00}_{\tiny k_0}
  \underbrace{1 \cdots 11}_{\tiny k_1}
  \cdots
  \underbrace{0 \cdots 00}_{\tiny k_{l-3}-1}
1
  \underbrace{0 \cdots 0}_{\tiny k_{l-2}}\,\,\, (k_{l-1}=0), \\
&\underbrace{0 \cdots 00}_{\tiny k_0}
  \underbrace{1 \cdots 11}_{\tiny k_1}
  \cdots
  \underbrace{1 \cdots 11}_{\tiny k_{l-2}}
  \underbrace{0 \cdots 0}_{\tiny k_{l-1}-1}1\,\,\, (k_{l-1} \ge 1). 
\end{split}
\right.
\]
By the same way as the proof of Theorem \ref{Thm_FRC_Matrix}, we have $[2^n:2^n-m]=[k_1, k_2, \cdots,k_{l-1}]$. \\
(3) We have
\[
\{m+1\}_n=
\left\{
\begin{split}
&\underbrace{1 \cdots 11}_{\tiny k_0}
  \underbrace{0 \cdots 00}_{\tiny k_1}
  \cdots
  \underbrace{1 \cdots 11}_{\tiny k_{l-3}}
  \underbrace{0 \cdots 0}_{\tiny k_{l-2}-1}1\,\,\, (k_{l-1}=0), \\
&\underbrace{1 \cdots 11}_{\tiny k_0}
  \underbrace{0 \cdots 00}_{\tiny k_1}
  \cdots
  \underbrace{0 \cdots 0}_{\tiny k_{l-2}-1}1
  \underbrace{0 \cdots 00}_{\tiny k_{l-1}}\,\,\, (k_{l-1} \ge 1). 
\end{split}
\right.
\]
By the same way as the proof of Theorem \ref{Thm_FRC_Matrix}, we have $[2^n:m+1]=[k_0, k_1, \ldots,k_{l-2}]$. \\
(4) By (2) and (3), we have the result.
\end{proof}

\begin{Def} \label{ASWQHOPM}
For a finite design 
$D=
  \underbrace{1 \cdots 11}_{\tiny k_0}
  \underbrace{0 \cdots 00}_{\tiny k_1}
  \cdots
  \underbrace{0 \cdots 00}_{\tiny k_{l-2}}
  \underbrace{1 \cdots 11}_{\tiny k_{l-1}},
$
where $l$ is odd, $k_0 \ge 0, k_i \ge 1\, (i=1, 2, \ldots, l-2)$ and $k_{l-1} \ge 0$,
the {\it inverse design} of $D$ is 
\[
D^*=
  \underbrace{1 \cdots 11}_{\tiny k_{l-1}}
  \underbrace{0 \cdots 00}_{\tiny k_{l-2}}
  \cdots
  \underbrace{0 \cdots 00}_{\tiny k_1}
  \underbrace{1 \cdots 11}_{\tiny k_0}.
\]
If $D=\{m\}_n$, then we denote by $D^*=\{m^*\}_n$.
We note that $(D^*)^*=D$.
\end{Def}

\begin{Thm}
\label{Thm_m^*}
For a finite design $\{m\}_n$ and its inverse design $\{m^*\}_n$, we have the following: 
\[
\begin{split}
&(1) \,\,\, [2^n:m]=[2^n:m^*]. \qquad \qquad \quad \,\,\,\,\,\, (2) \,\,\, [2^n:2^n-m]=[2^n:m^*+1]. \\
&(3) \,\,\, [2^n:m+1]=[2^n:2^n-m^*]. \qquad \,\, (4) \,\,\, [2^n:2^n-(m+1)]=[2^n:2^n-(m^*+1)].
\end{split}
\]
\end{Thm}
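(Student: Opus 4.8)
The plan is to reduce all four identities to the continuant representations already recorded in Theorem \ref{Lem_4FRC_Matrix}, and then to exploit the palindrome symmetry of the continuant from Lemma \ref{Lem_Special Matrix} (2), namely $[x_0, x_1, \ldots, x_{l-1}] = [x_{l-1}, x_{l-2}, \ldots, x_0]$. First I would dispose of the degenerate case $l=1$: here $\{m\}_n = \underbrace{1 \cdots 1}_{\tiny k_0}$ with $n = k_0$ and $m = 2^n - 1$, and the inverse design $\{m^*\}_n$ is the same word, so $m^* = m$ and all four identities are trivial.

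For $l \ge 3$ the key preliminary step is to write down the run-length decomposition of $\{m^*\}_n$. Since forming the inverse design reverses the order of the maximal runs, if $\{m\}_n$ has run-lengths $k_0, k_1, \ldots, k_{l-1}$, then $\{m^*\}_n$ has run-lengths $k_i' = k_{l-1-i}$, that is $k_0' = k_{l-1}, \, k_1' = k_{l-2}, \, \ldots, \, k_{l-1}' = k_0$. This reversal preserves the admissibility constraints ($k_0' \ge 0$, the middle lengths $\ge 1$, and $k_{l-1}' \ge 0$) and the oddness of $l$, so Theorem \ref{Lem_4FRC_Matrix} applies verbatim to $\{m^*\}_n$ as well.

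The remainder is then a direct matching of continuants. Applying Theorem \ref{Lem_4FRC_Matrix} to both designs, identity (1) pairs $[2^n:m] = [k_0, \ldots, k_{l-1}]$ against $[2^n:m^*] = [k_{l-1}, \ldots, k_0]$; identity (2) pairs $[2^n:2^n-m] = [k_1, \ldots, k_{l-1}]$ against $[2^n:m^*+1] = [k_{l-1}, \ldots, k_1]$; identity (3) pairs $[2^n:m+1] = [k_0, \ldots, k_{l-2}]$ against $[2^n:2^n-m^*] = [k_{l-2}, \ldots, k_0]$; and identity (4) pairs $[2^n:2^n-(m+1)] = [k_1, \ldots, k_{l-2}]$ against $[2^n:2^n-(m^*+1)] = [k_{l-2}, \ldots, k_1]$. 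In each case the two sides are a continuant and its reversal, hence equal by Lemma \ref{Lem_Special Matrix} (2).

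I do not expect any serious obstacle here; the argument is essentially bookkeeping once the dictionary $k_i' = k_{l-1-i}$ is in place. The only point that requires care is selecting the correct clause of Theorem \ref{Lem_4FRC_Matrix} for $\{m^*\}_n$, since the index shift under reversal means that the ``drop the leading run'' operation on $\{m^*\}_n$ (giving $2^n - m^*$) is what matches the ``drop the trailing run'' operation on $\{m\}_n$ (giving $m+1$), and analogously for the other crossed pairings; keeping these correspondences straight is what makes identities (2) and (3) come out with the stated mixed forms.
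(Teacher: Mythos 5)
Your proposal is correct and follows essentially the same route as the paper, which proves all four identities by combining the continuant formulas of Theorem \ref{Lem_4FRC_Matrix} with the reversal symmetry of Lemma \ref{Lem_Special Matrix} (2). You merely make explicit what the paper leaves implicit --- the run-length dictionary $k_i' = k_{l-1-i}$ for $\{m^*\}_n$, the degenerate case $l=1$, and the crossed pairing of items (2) and (3) --- all of which you handle correctly.
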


\begin{proof}
(1) By Lemma \ref{Lem_Special Matrix} (2) and Theorem \ref{Lem_4FRC_Matrix} (1), we have the result. \\
(2), (3) By Lemma \ref{Lem_Special Matrix} (2), Theorem \ref{Lem_4FRC_Matrix} (2) and (3), we have the result. \\
(4) By Lemma \ref{Lem_Special Matrix} (2) and Theorem \ref{Lem_4FRC_Matrix} (4), we have the result.
\end{proof}

\begin{Def}
\label{Def_Continued Fraction}
{\rm 
Let $k_0, k_1, k_2, \ldots, k_{l-1}$ be integers such that $l \ge 1, k_0 \ge 0$, and $ k_i \ge 1$ $(i=1, 2, \ldots, l-1)$.
We denote the continued fraction of the sequence by
\[
CF(k_0, k_1, \ldots, k_{l-2}, k_{l-1})
=k_0+\frac{1}{k_1} {\genfrac{}{}{0pt}{}{}{+ \dotsb \dotsb +}}
        \frac{1}{k_{l-2}} {\genfrac{}{}{0pt}{}{}{+}}
        \frac{1}{k_{l-1}} {\genfrac{}{}{0pt}{}{}{}}.
\]
That is, $CF(k_0, k_1, \ldots, k_{l-1})$ is obtained by computing the following recursive sequence
\[
CF(k_j, k_{j+1}, \ldots, k_{l-1})=k_j+\frac{1}{CF(k_{j+1}, \ldots, k_{l-1})} \,\,\,\,\, \text{from} \,\, j=l-2 \,\, \text{to} \,\, 0. 
\]
We note that
\[
\begin{split}
&CF(k_0, k_1, \ldots, k_{l-1})=1+CF(k_0-1, k_1, \ldots, k_{l-1})\,\,\,\,\,\,\,(k_0 \ge 1), \qquad \\
&CF(k_0, k_1, \ldots, k_{l-1})=CF(k_0, k_1, \ldots, k_{l-1}-1, 1)\,\,\,\,\,\,\,(k_{l-1} \ge 2). \qquad
\end{split}
\]
In computing $CF(k_0, k_1, \ldots, k_{l-1})$, if we allow computations such as $1/0=\infty$ and $1/\infty=0$, then we have 
$
CF(k_0, k_1, \ldots, k_{l-2}, 0)=CF(k_0, k_1, \ldots, k_{l-3})\,\,(l \ge 3).
$
If $k_{l-1}=0$ and $l=1$, then $CF(k_0, k_1, \ldots, k_{l-1})=CF(0)=0$.
If $k_{l-1}=0$ and $l=2$, then $CF(k_0, k_1, \ldots, k_{l-1})=CF(k_0, 0)=\infty$.
Hence we may assume that $k_{l-1} \ge 0$. \\
}
\end{Def}

The following is a relationship between the continuant and the continued fractions.

\begin{Lem}
\label{Thm_FRC_Product1}
Let $k_0, k_1, k_2, \ldots, k_{l-1}$ be integers such that $l \ge 1, k_0 \ge 0, k_i \ge 1$ $(i=1, 2, \ldots, l-2)$ and $k_{l-1} \ge 0$.
Then we have the following:
\[
\begin{split}
(1) \,\, &{\rm(i)} \,\, \text{If} \,\, k_{\tiny l-1} \ge 1, \text{then we have} \,\,
[k_0, k_1, \ldots, k_{l-1}]=\prod_{j=0}^{l-1}CF(k_j, k_{j+1}, \ldots, k_{l-1}). \\
&{\rm (ii)} \,\, \text{If} \,\, k_{\tiny l-1}=0, \text{then we have} \,\,
[k_0, k_1, \ldots, k_{l-1}]=\prod_{j=0}^{l-3}CF(k_j, k_{j+1}, \ldots, k_{l-3}) \,\,\,\,\,(l \ge 3). \\
(2) \,\, &{\rm (i)} \,\, \text{If} \,\, k_0 \ge 1, \text{then we have} \,\,
[k_0, k_1, \ldots, k_{l-1}]=\prod_{j=0}^{l-1}CF(k_j, k_{j-1}, \ldots, k_0). \\
&{\rm (ii)} \,\, \text{If} \,\, k_0=0, \text{then we have} \,\,
[k_0, k_1, \ldots, k_{l-1}]=\prod_{j=2}^{l-1}CF(k_j, k_{j-1}, \ldots, k_2) \,\,\,\,\,(l \ge 3). 
\end{split}
\]
\end{Lem}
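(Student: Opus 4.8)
The plan is to reduce the whole statement to a single identity expressing a continued fraction as a ratio of consecutive continuants, and then to telescope. First I would establish the auxiliary identity
\[
CF(k_j, k_{j+1}, \ldots, k_{l-1})=\frac{[k_j, k_{j+1}, \ldots, k_{l-1}]}{[k_{j+1}, k_{j+2}, \ldots, k_{l-1}]}
\]
by downward induction on $j$ from $j=l-1$ to $j=0$. The base case $j=l-1$ reads $CF(k_{l-1})=k_{l-1}=[k_{l-1}]/[\varepsilon]$, using the convention $[\varepsilon]=1$. For the inductive step I would substitute the defining relation $CF(k_j,\ldots,k_{l-1})=k_j+1/CF(k_{j+1},\ldots,k_{l-1})$, insert the inductive hypothesis, clear the common denominator, and recognize the resulting numerator $k_j[k_{j+1},\ldots,k_{l-1}]+[k_{j+2},\ldots,k_{l-1}]$ as $[k_j,\ldots,k_{l-1}]$ via Lemma \ref{Lem_Special Matrix} (3) (whose hypothesis $l\ge 2$ is met since the continuant $[k_j,\ldots,k_{l-1}]$ has length $\ge 2$ throughout the induction).

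Granting this identity, part (1)(i) (the case $k_{l-1}\ge 1$) follows by telescoping: the product $\prod_{j=0}^{l-1}CF(k_j,\ldots,k_{l-1})$ becomes $\prod_{j=0}^{l-1}\frac{[k_j,\ldots,k_{l-1}]}{[k_{j+1},\ldots,k_{l-1}]}$, in which each denominator cancels the numerator of the next factor, leaving $[k_0,\ldots,k_{l-1}]/[\varepsilon]=[k_0,\ldots,k_{l-1}]$. Here the assumption $k_{l-1}\ge 1$ together with $k_i\ge 1$ for interior $i$ guarantees every continuant appearing in a denominator is positive, so no division by zero intrudes.

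For part (1)(ii) (the case $k_{l-1}=0$, $l\ge 3$) I would first strip the trailing $0$ using the continuant recursion (\ref{SXSAWJTDDJ}), equivalently Lemma \ref{Lem_Special Matrix} (6): since the last entry vanishes, $[k_0,\ldots,k_{l-2},0]=[k_0,\ldots,k_{l-3}]$. The truncated sequence $k_0,\ldots,k_{l-3}$ has last entry $k_{l-3}\ge 1$ when $l\ge 4$, and reduces to the single term $[k_0]=CF(k_0)$ when $l=3$; in either case part (1)(i) applies to it and produces exactly $\prod_{j=0}^{l-3}CF(k_j,\ldots,k_{l-3})$.

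Finally, parts (2)(i) and (2)(ii) follow from (1)(i) and (1)(ii) through the reversal symmetry $[k_0,\ldots,k_{l-1}]=[k_{l-1},\ldots,k_0]$ of Lemma \ref{Lem_Special Matrix} (2): the reversed word $k_{l-1},\ldots,k_0$ again satisfies the hypotheses (interior entries $k_{l-2},\ldots,k_1\ge 1$, outer entries $\ge 0$), so applying part (1) to it and relabelling the product index converts each $CF$ of a suffix of the reversed word into a $CF$ of a prefix of the original word, yielding the stated forms. The case split matches because the reversed sequence ends in $k_0$, so $k_0\ge 1$ triggers (1)(i) and $k_0=0$ triggers (1)(ii). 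I expect the main obstacle to be purely bookkeeping: pinning down the base case and the empty-continuant convention in the ratio identity, and tracking the reversal in part (2) so that the product ranges and the arguments of each $CF$ line up; the substantive content is carried entirely by Lemma \ref{Lem_Special Matrix} (2), (3), and (6).
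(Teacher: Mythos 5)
Your proof is correct, but it takes a different route from the paper's on the main part (1)(i). The paper works entirely inside the determinant presentation of Lemma \ref{Lem_Special Matrix} (1): starting from the tridiagonal determinant it performs elementary row operations that successively replace the diagonal entries $k_{l-2}, k_{l-3}, \ldots$ by the tail values $k_{l-2}+1/k_{l-1}$, $k_{l-3}+1/(k_{l-2}+1/k_{l-1})$, etc., while zeroing out the subdiagonal, so that the determinant collapses to the product of the diagonal entries $\prod_{j=0}^{l-1}CF(k_j,\ldots,k_{l-1})$ directly. You instead isolate the classical convergent identity $CF(k_j,\ldots,k_{l-1})=[k_j,\ldots,k_{l-1}]/[k_{j+1},\ldots,k_{l-1}]$, prove it by downward induction via the recursion of Lemma \ref{Lem_Special Matrix} (3) (with the convention $[\varepsilon]=1$ handling the base and the length-two step), and then telescope; this is the same cancellation the paper's determinant manipulation performs implicitly, but made explicit as a standalone lemma. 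Your version is arguably the more transparent one, and the ratio identity you establish is exactly what the paper later invokes anyway (in the proof of Lemma \ref{DDUGEADXC} it uses the generalized form $CF(x_0,\ldots,x_{l-2},x_{l-1}+x)=[x_0,\ldots,x_{l-2},x_{l-1}+x]/[x_1,\ldots,x_{l-2},x_{l-1}+x]$ without separate proof), so your approach buys a reusable statement at the cost of an extra induction; the paper's buys brevity by staying inside one determinant computation. Your treatments of (1)(ii) via Lemma \ref{Lem_Special Matrix} (6) and of (2) via the reversal symmetry of Lemma \ref{Lem_Special Matrix} (2) coincide with the paper's, and your bookkeeping there (the $l=3$ degenerate case, the reindexing $j=l-1-j'$, positivity of the denominators when the interior $k_i\ge 1$) is sound.
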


\begin{proof}
(1) (i) \, By Lemma \ref{Lem_Special Matrix} (1) and elementary transformations of matrices, we have
\[
\begin{split}
&[k_0, k_1, \cdots, k_{l-1}] 
=\left|
\begin{matrix}
              k_0 & 1                           &        &                 &                    & {\Large \text{0}} \\
              -1  &k_1                          &1        &             &                        &   \\
                   & \ddots                                  &\ddots     & \ddots               &                     &   \\
                  &                                   &\!\!\!\!\!-1        &\!\!\!\!\!k_{l-3}                &\!\!\!\!\! 1                    &   \\
                        &                             &       &\!\!\!\!\!\!\!\!\!\!-1         &\!\!\!\!\!k_{l-2}                      &\!\!\!\!\! 1 \\
{\Large \text{0}} &                             &        &         &\!\!\!\!\!\!\!\!\!\!-1                          &\!\!\!\!\! k_{l-1} 
\end{matrix}
\right|
=\left|
\begin{matrix}
              k_0 & 1                           &        &                 &                    & {\Large \text{0}} \\
              -1  &k_1                          &1        &             &                        &   \\
                   & \ddots                                  &\ddots     & \ddots               &                     &   \\
                  &                                   &\!\!\!\!\!-1        &\!\!\!\!\!k_{l-3}                &\!\!\!\!\!\!\!\! 1                    &   \\
                        &                             &       &\!\!\!\!\!\!\!\!-1         &\!\!\!\!\!\!\!\!k_{l-2}\!+\!\cfrac{\!1}{k_{l-1}}                      &\!\!\!\!\! 1 \\
{\Large \text{0}} &                             &        &         &\!\!\!\!\!\!\!\!\!\!0                          &\!\!\!\!\! k_{l-1} 
\end{matrix}
\right| \\
&\,\,=\left|
\begin{matrix}
              k_0 & 1                           &        &                 &                    & {\Large \text{0}} \\
              -1  &k_1                          &1        &             &                        &   \\
                   & \ddots                                  &\ddots     &\!\!\!\!\!\!\!\!\!\!\!\!\!\!\!\!\!\!\!\!\!\!\!\!\!\!\! \ddots               &                     &   \\
                  &                                   &\!\!\!\!\!-1        &\!\!\!\!\! k_{l-3}\!+\!\cfrac{\!1}{k_{l-2}\!+\!\cfrac{\!1}{k_{l-1}}}     &\!\!\!\!\!\!\!\!\! 1                    &   \\
                        &                             &       &\,\,\,\,\,\,\,\,0         &\!\!\!\!\!\!\!\!k_{l-2}\!+\!\cfrac{\!1}{k_{l-1}}          &\!\! 1 \\
{\Large \text{0}} &                             &        &         &\!\!\!\!\!\!\!\!\!\!0                          &\!\!\!\!\! k_{l-1} 
\end{matrix}
\right| 
=\cdots \cdots=\prod_{j=0}^{l-1}CF(k_j, k_{j+1}, \ldots, k_{l-1}).
\end{split}
\]
(ii) \, Since $[k_0, k_1, \ldots, k_{l-2},0]=[k_0, k_1, \ldots, k_{l-3}]$ by Lemma \ref{Lem_Special Matrix} (6), we have the result. \\
(2) \, By (1) and Lemma \ref{Lem_Special Matrix} (2), we have the result.
\end{proof}

The following is a relationship between SDIs and the continued fractions.

\begin{Thm}
\label{Lem_FRC_Product2}
Let
$
\{m\}_n=
           \underbrace{1 \cdots 11}_{\tiny k_0}
           \underbrace{0 \cdots 00}_{\tiny k_1}
           \cdots
           \underbrace{0 \cdots 00}_{\tiny k_{l-2}}
           \underbrace{1 \cdots 11}_{\tiny k_{l-1}}
$
be a finite design, where $l$ is odd, $k_0 \ge 0, k_i \ge 1\,\,(i=1, 2, \ldots, l-2)$ and $k_{l-1} \ge 0$.
Then we have the following:
\[
\begin{split}
(1) \,\, &{\rm (i)} \,\, \text{If} \,\, k_{\tiny l-1} \ge 1, \text{then we have} \,\,
[2^n : m]=\prod_{j=0}^{l-1}CF(k_j, k_{j+1}, \ldots, k_{l-1}). \\
&{\rm (ii)} \,\, \text{If} \,\, k_{\tiny l-1}=0, \text{then we have} \,\,
[2^n : m]=\prod_{j=0}^{l-3}CF(k_j, k_{j+1}, \ldots, k_{l-3}) \,\,\,\,\,(l \ge 3). \qquad \qquad \\
(2) \,\, &{\rm (i)} \,\, \text{If} \,\, k_0 \ge 1, \text{then we have} \,\,
[2^n : m]=\prod_{j=0}^{l-1}CF(k_j, k_{j-1}, \ldots, k_0). \qquad \qquad \\
&{\rm (ii)} \,\, \text{If} \,\, k_0=0, \text{then we have} \,\,
[2^n : m]=\prod_{j=2}^{l-1}CF(k_j, k_{j-1}, \ldots, k_2) \,\,\,\,\,(l \ge 3). \qquad \\
(3) \,\, &{\rm (i)} \,\, \frac{[2^n : m]}{[2^n : 2^n-m]}=CF(k_0, k_1, \ldots, k_{l-1}). \quad
{\rm (ii)} \,\, \frac{[2^n : m]}{[2^n : m+1]}=CF(k_{l-1}, k_{l-2}, \cdots, k_0). 
\end{split}
\]
\end{Thm}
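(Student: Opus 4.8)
The plan is to derive every part from the continuant identifications already established in Theorem \ref{Thm_FRC_Matrix} and Theorem \ref{Lem_4FRC_Matrix}, combined with the continuant--continued-fraction dictionary of Lemma \ref{Thm_FRC_Product1}. Parts (1) and (2) require essentially no new work: by Theorem \ref{Thm_FRC_Matrix} we have $[2^n:m]=[k_0,k_1,\ldots,k_{l-1}]$, and substituting this continuant into the product expansions of Lemma \ref{Thm_FRC_Product1}(1) and (2) yields (1)(i),(ii) and (2)(i),(ii) directly, according to whether $k_{l-1}\ge 1$ or $k_{l-1}=0$ (respectively $k_0\ge 1$ or $k_0=0$).

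For part (3) the idea is to realize the continued fraction as a quotient of two \emph{telescoping} products. First I would treat the generic case $l\ge 3$ with $k_0\ge 1$ and $k_{l-1}\ge 1$. For (3)(i), Theorem \ref{Lem_4FRC_Matrix}(1),(2) give $[2^n:m]=[k_0,k_1,\ldots,k_{l-1}]$ and $[2^n:2^n-m]=[k_1,k_2,\ldots,k_{l-1}]$; applying Lemma \ref{Thm_FRC_Product1}(1)(i) to each (the denominator being the product over the shifted sequence $k_1,\ldots,k_{l-1}$) gives
\[
\frac{[2^n:m]}{[2^n:2^n-m]}=\frac{\prod_{j=0}^{l-1}CF(k_j,\ldots,k_{l-1})}{\prod_{j=1}^{l-1}CF(k_j,\ldots,k_{l-1})}=CF(k_0,k_1,\ldots,k_{l-1}),
\]
since every factor with $j\ge 1$ cancels. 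For (3)(ii), I would instead use Theorem \ref{Lem_4FRC_Matrix}(1),(3), namely $[2^n:m]=[k_0,\ldots,k_{l-1}]$ and $[2^n:m+1]=[k_0,\ldots,k_{l-2}]$, and expand both by the reversed product formula Lemma \ref{Thm_FRC_Product1}(2)(i); the quotient telescopes to the single surviving factor $CF(k_{l-1},k_{l-2},\ldots,k_0)$.

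As an alternative route, part (3) can be obtained directly from the continuant recursion: one shows by induction on the length, using Lemma \ref{Lem_Special Matrix}(3) and the defining recursion of $CF$, that $[x_0,\ldots,x_{s-1}]/[x_1,\ldots,x_{s-1}]=CF(x_0,\ldots,x_{s-1})$; then (3)(i) follows from Theorem \ref{Lem_4FRC_Matrix}(1),(2), and (3)(ii) from Theorem \ref{Lem_4FRC_Matrix}(1),(3) after reversing the entries with Lemma \ref{Lem_Special Matrix}(2).

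The hard part is not the core argument but the bookkeeping at the boundary. When $k_{l-1}=0$ (so $m$ is even) the numerator must be expanded by Lemma \ref{Thm_FRC_Product1}(1)(ii), while the denominator $[k_1,\ldots,k_{l-1}]$ must be collapsed via $[k_1,\ldots,k_{l-2},0]=[k_1,\ldots,k_{l-3}]$ (Lemma \ref{Lem_Special Matrix}(6)); one then has to check that $CF(k_0,\ldots,k_{l-2},0)=CF(k_0,\ldots,k_{l-3})$ under the conventions $1/0=\infty$ and $1/\infty=0$, so that the telescoped identity still reads $CF(k_0,\ldots,k_{l-1})$. The symmetric situation $k_0=0$ arises for (3)(ii). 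Finally the degenerate length $l=1$ (where $m=2^n-1$, $[2^n:2^n-m]=1$ and $[2^n:m+1]=1$) must be verified by hand against Theorem \ref{Lem_4FRC_Matrix}; there both quotients collapse to $CF(k_0)=k_0=n$. Confirming that the $1/0$, $1/\infty$ conventions are applied consistently across all these degenerate cases is the only genuinely delicate point.
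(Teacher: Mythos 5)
Your proposal is correct and takes essentially the same route as the paper: parts (1) and (2) by combining Theorem \ref{Thm_FRC_Matrix} with Lemma \ref{Thm_FRC_Product1}, and part (3) by telescoping the product expansions of the numerator and denominator supplied by Theorem \ref{Lem_4FRC_Matrix} (2) and (3). Your explicit bookkeeping for the degenerate cases ($k_{l-1}=0$, $k_0=0$, $l=1$) merely spells out what the paper delegates to the $1/0$ and $1/\infty$ conventions of Definition \ref{Def_Continued Fraction} and Remark \ref{AYJWDSAS}.
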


\begin{proof}
(1) (i) By Theorem \ref{Thm_FRC_Matrix} and Lemma \ref{Thm_FRC_Product1} (1) (i), we have the result. \\
(ii) By Theorem \ref{Thm_FRC_Matrix} and Lemma \ref{Thm_FRC_Product1} (1) (ii), we have the result. \\
(2) (i) By Theorem \ref{Thm_FRC_Matrix} and Lemma \ref{Thm_FRC_Product1} (2) (i), we have the result. \\
(ii) By Theorem \ref{Thm_FRC_Matrix} and Lemma \ref{Thm_FRC_Product1} (2) (ii), we have the result. \\
(3) (i) By (1) and Theorem \ref{Lem_4FRC_Matrix} (2), we have the result. \\
(ii) By (2) and Theorem \ref{Lem_4FRC_Matrix} (3), we have the result. 
\end{proof}

\begin{Rem} \label{AYJWDSAS}
{\rm
(1) By Definition \ref{Def_SDA} (2), in the left-hand side of the equation in Theorem \ref{Lem_FRC_Product2} (3) (i), we may assume that $m$ is odd. \\
(2) In Theorem \ref{Lem_FRC_Product2} (3), we may assume $k_0=0$ and/or $k_{l-1}=0$. 
}
\end{Rem}

\begin{Cor} \label{BESHSWQA}
Under the setting in Theorem \ref{Lem_FRC_Product2}, we have the following: \\
$(1)$ \, $k_0 \ge 1$ if and only if \, $[2^n : m] \ge [2^n : 2^n-m]$. \\
$(2)$ \, $k_{l-1} \ge 1$ if and only if \, $[2^n : m] \ge [2^n : m+1]$. 
\end{Cor}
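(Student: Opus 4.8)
The plan is to read off both equivalences directly from the continued-fraction identities in Theorem \ref{Lem_FRC_Product2} (3). Since $0 \le m \le 2^n-1$ gives $1 \le 2^n-m \le 2^n$ and $1 \le m+1 \le 2^n$, the integers $[2^n:2^n-m]$ and $[2^n:m+1]$ are strictly positive, so the inequality $[2^n:m] \ge [2^n:2^n-m]$ is equivalent to $[2^n:m]/[2^n:2^n-m] \ge 1$, that is to $CF(k_0,k_1,\ldots,k_{l-1}) \ge 1$, and likewise $[2^n:m] \ge [2^n:m+1]$ is equivalent to $CF(k_{l-1},k_{l-2},\ldots,k_0) \ge 1$. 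Both (1) and (2) therefore follow from a single elementary fact: for a block sequence of the present shape the continued fraction is $\ge 1$ if and only if its leading partial quotient is $\ge 1$ (the leading quotient being $k_0$ for (1) and $k_{l-1}$ for (2)).

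To prove this fact I would argue from the defining recursion $CF(c_0,c_1,\ldots,c_{s-1}) = c_0 + 1/CF(c_1,\ldots,c_{s-1})$ of Definition \ref{Def_Continued Fraction}, together with the observation that any tail $CF(c_1,\ldots,c_{s-1})$ formed from positive integers is $\ge 1$, so its reciprocal lies in $(0,1]$. If the leading quotient is $\ge 1$, then $CF=(\text{leading})+(\text{positive})\ge 1$ at once. If the leading quotient is $0$, then $CF=1/T$ with $T=CF(\text{tail})$, and it remains to see that $T>1$ strictly, which yields $CF<1$ and closes the equivalence. Alternatively, the same reduction can be carried out purely through continuants: by Theorem \ref{Lem_4FRC_Matrix} and Lemma \ref{Lem_Special Matrix} (3) one has
\[
[2^n:m]-[2^n:2^n-m]=[k_0,k_1,\ldots,k_{l-1}]-[k_1,\ldots,k_{l-1}]=(k_0-1)[k_1,\ldots,k_{l-1}]+[k_2,\ldots,k_{l-1}],
\]
whose sign is governed precisely by whether $k_0\ge 1$ or $k_0=0$; an identical computation applied to the reversed sequence (via Lemma \ref{Lem_Special Matrix} (2) and Theorem \ref{Lem_FRC_Product2} (3)(ii)) handles (2).

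The one delicate point, and the step I expect to be the main obstacle, is the strict inequality $T>1$ when the leading quotient vanishes. Here the hypothesis that $l$ is odd is decisive: if $k_0=0$ (resp. $k_{l-1}=0$) and $l\ge 3$, the remaining sequence $k_1,\ldots,k_{l-1}$ (resp. $k_{l-2},\ldots,k_0$) has at least two terms and begins with a quotient $\ge 1$, so $T=(\ge 1)+(\text{strictly positive reciprocal})>1$; in continuant language this is exactly the strict inequality $[k_1,\ldots,k_{l-1}]>[k_2,\ldots,k_{l-1}]$, which holds because the displayed difference collapses to $(k_1-1)[k_2,\ldots,k_{l-1}]+[k_3,\ldots,k_{l-1}]$ with $k_1\ge 1$ and a continuant that is at least $[\varepsilon]=1$. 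Finally I would dispatch the degenerate case $l=1$ by hand: then $\{m\}_n=\underbrace{1\cdots 1}_{k_0}$ with $k_0=n$ and $m=2^n-1$, so $[2^n:m]=n$ and $[2^n:2^n-m]=[2^n:m+1]=1$, and both equivalences reduce to $n\ge 1 \iff k_0=k_{l-1}\ge 1$; the empty design $(n=0,\,m=0)$ and the even-order case $k_{l-1}=0$ are verified the same way.
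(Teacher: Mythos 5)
Your proposal is correct and follows essentially the same route as the paper, which likewise deduces both parts from Theorem \ref{Lem_FRC_Product2} (3) via the observation that $CF(k_0,k_1,\ldots,k_{l-1})\ge 1$ exactly when the leading quotient is $\ge 1$ (the paper states this in one line, leaving implicit the strict inequality $CF<1$ when the leading quotient vanishes and the degenerate cases $l=1$, trailing zero, and the empty design, all of which you verify explicitly). Your supplementary continuant computation via Lemma \ref{Lem_Special Matrix} (3) is a sound, more explicit variant of the same idea and correctly pins down the one delicate point, the strictness when $k_0=0$ or $k_{l-1}=0$.
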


\begin{proof}
(1) \, Since $CF(k_0, k_1, \ldots, k_{l-1}) \ge 1$, we have the result by Theorem \ref{Lem_FRC_Product2} (3) (i). \\
(2) \, Since $CF(k_{l-1}, \ldots, k_1, k_0) \ge 1$, we have the result by Theorem \ref{Lem_FRC_Product2} (3) (ii). \\
\end{proof}

The following statement is the basis to define the {\it assembly function} in Section 6.

\begin{Cor}
\label{Thm_RRC_Continued Fraction}
Let $a$ and $b$ be positive coprime integers. 
Then there exists a unique pair of two positive integers $(m, n)$ such that $m$ is odd, $1 \le m \le 2^n-1$ and
\[
\frac{a}{\,b\,}=\frac{[2^n:m]}{[2^n:2^n-m]}.
\]
\end{Cor}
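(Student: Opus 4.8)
The plan is to read this corollary as the statement that the Euclidean-design construction sets up a bijection between positive coprime pairs $(a,b)$ and admissible pairs $(m,n)$ (those with $m$ odd and $1 \le m \le 2^n-1$), and to deduce existence and uniqueness separately from the Design Representation Theorem (Theorem \ref{Thm_FRC_Order}).

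For existence I would simply invoke Theorem \ref{Thm_FRC_Order}(1): given positive coprime $a$ and $b$, form the Euclidean design $\{m\}_n$ of $a$ generated by $b$ (Definition \ref{GSIEVGTHPA}). This is a reduced design, so $m$ is odd, and as recorded in that definition one has $1 \le m \le 2^n-1$ with $n \ge 1$ (the boundary instance being $(a,b)=(1,1)\mapsto(m,n)=(1,1)$). Part (1) then gives $[2^n:m]=a$ and $[2^n:2^n-m]=b$, which yields the displayed equality.

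For uniqueness, suppose $(m,n)$ and $(m',n')$ both satisfy the conditions. The first step is to pin down the SDI values: by Corollary \ref{CorCoprime}(2) the integers $[2^n:m]$ and $[2^n:2^n-m]$ are coprime, and both are positive since $1 \le m \le 2^n-1$; the same holds for the primed pair. Hence each side expresses $a/b$ as a fraction of positive integers in lowest terms, and since $a,b$ are coprime such an expression is unique, forcing $[2^n:m]=[2^{n'}:m']=a$ and $[2^n:2^n-m]=[2^{n'}:2^{n'}-m']=b$. The second step reduces everything to the uniqueness of the Euclidean data: by Theorem \ref{Thm_FRC_Order}(2) the design $\{m\}_n$ is the Euclidean design of its realizing pair, and combined with the equalities just obtained (through part (1)) that realizing pair must be $(a,b)$ itself; likewise $\{m'\}_{n'}$ is the Euclidean design of $a$ generated by $b$. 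Since the partial quotients of $a$ generated by $b$ are uniquely determined by the Euclidean algorithm (Definition \ref{Def_Partial Quotients}) and Definition \ref{GSIEVGTHPA} builds the Euclidean design from them by an explicit deterministic rule, the two designs coincide as words; and because a design $\{m\}_n = d_1\cdots d_n$ recovers both its length $n$ and its design number $m=\sum_{i=1}^n 2^{n-i}d_i$, equality of the words gives $(m,n)=(m',n')$.

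The main obstacle is the uniqueness half. The equality-of-fractions argument only recovers the SDI values $a$ and $b$; one must then rule out the possibility that these values arise from two distinct admissible pairs $(m,n)$. This is exactly the injectivity of the Euclidean-design map, and I expect the cleanest route is to factor it through the uniqueness of the partial quotients (together with the deterministic passage from partial quotients to design in Definition \ref{GSIEVGTHPA}), rather than attempting to invert the defining recursion of $[2^n:m]$ directly.
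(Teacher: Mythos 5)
Your proposal is correct in substance, and it is in fact exactly the alternative route the paper itself flags in the sentence following its proof (``We can also prove Corollary \ref{Thm_RRC_Continued Fraction} by Theorem \ref{Thm_FRC_Order} directly''). The paper's printed proof is different and shorter: it obtains existence and uniqueness in one stroke from continued fractions, using the unique expansion $a/b=CF(k_0,k_1,\ldots,k_{l-1})$ with $l$ odd, $k_0\ge 0$ and $k_i\ge 1$ $(i\ge 1)$, together with Theorem \ref{Lem_FRC_Product2} (3) (i), which identifies $[2^n:m]/[2^n:2^n-m]$ with the continued fraction of the block exponents of $\{m\}_n$; since an odd design is recoverable from its block sequence, uniqueness of the continued fraction expansion transfers directly to uniqueness of $(m,n)$, with no need for your lowest-terms step. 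Your route via the Design Representation Theorem buys independence from the Section 4 machinery, at the price of two extra ingredients that you correctly supply: Corollary \ref{CorCoprime} (2) plus positivity to pin down the SDI values from the fraction in lowest terms, and the injectivity of the Euclidean-design map, which you rightly factor through the determinism of the Euclidean algorithm rather than trying to invert the recursion defining $[2^n:m]$. One small patch is needed in your uniqueness half: Theorem \ref{Thm_FRC_Order} (2) carries the hypothesis $(m,n)\neq(1,1)$, so the corner case must be disposed of separately. If, say, $(m,n)=(1,1)$, then $a=b=1$, and any competing admissible pair $(m',n')\neq(1,1)$ would, by your own argument, exhibit $(1,1)$ as the realizing pair of a sequence with last quotient $r_{t-1}\ge 2$, contradicting Lemma \ref{KRPFHWK} (3), (4); hence $(m',n')=(1,1)$ as well. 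With that one-line addition your argument is complete.
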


\begin{proof}
There exists a unique sequence of integers $k_0, k_1, \ldots, k_{l-1}$ such that $l$ is odd, $k_0 \ge 0, k_i \ge 1 \, (i=1, 2, \ldots, l-1)$ and
\[
\frac{a}{\,b\,}=CF(k_0, k_1, \ldots, k_{l-1}).
\]
The sequence uniquely determines $\{m\}_n$ and a pair $(m, n)$ such that $m$ is odd, $1 \le m \le 2^n-1$ as in Theorem \ref{Lem_FRC_Product2}. 
\end{proof}

We can also prove Corollary \ref{Thm_RRC_Continued Fraction} by Theorem \ref{Thm_FRC_Order} directly.

\section{Stern's diatomic matrix}

In this section, we define the {\it Stern's diatomic matrix}.

\begin{Def}
\label{Def_Modular Matrix PPP}
{\rm For two non-negative integers $m$ and $n$ with $0 \le m \le 2^n-1$, we define a {\it Stern's diatomic matrix} (SDM, for short) by
\[
U (2^n : m)
=\left(
\begin{matrix}
[2^n:m+1]& [2^n:m] \\
[2^n : 2^n-(m+1)]& [2^n : 2^n-m]
\end{matrix}
\right).
\]
We also denote it by
$
U (\{m\}_n)$,
where $\{m\}_n$ is the design of $[2^n : m]$.
We call $\{m\}_n$ the {\it design} of $U (2^n : m)$.
Then by Theorem \ref{Thm_det=1}, the determinant $|U (2^n : m)|=1$, and for the empty design $\varepsilon=\{0\}_0$, we have
$
U(\varepsilon)
=
\left(
\begin{matrix}
1& 0 \\
0& 1 
\end{matrix}
\right).
$
}
\end{Def}

\begin{Lem} \label{HFNJWTTN}
For an integer $n \ge 0$, we have 
\[
U(\{0\}_n)=U( \underbrace{0 \cdots 00}_{\tiny n})=
\left(
\begin{matrix}
1& 0 \\
n& 1 
\end{matrix}
\right) 
\,\,\,\,\, \text{and} \,\,\,\,\, 
U(\{2^n-1\}_n)=U( \underbrace{1 \cdots 11}_{\tiny n})=
\left(
\begin{matrix}
1& n \\
0& 1 
\end{matrix}
\right).
\]
\end{Lem}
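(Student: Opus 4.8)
The plan is to evaluate both matrices by direct substitution into Definition \ref{Def_Modular Matrix PPP}, reducing everything to four boundary values of the SDT. For the first matrix, the design $\{0\}_n=\underbrace{0\cdots0}_n$ has order $m=0$, so $U(\{0\}_n)=U(2^n:0)$ and its four entries are $[2^n:1]$, $[2^n:0]$, $[2^n:2^n-1]$, and $[2^n:2^n]$. I would read off $[2^n:0]=0$ and $[2^n:2^n]=1$ as the two boundary columns of the SDT: both follow by an immediate induction on $n$ from Definition \ref{Def_SDA}, since rule (2) gives $[2^{n+1}:0]=[2^n:0]$ and $[2^{n+1}:2^{n+1}]=[2^n:2^n]$, with base values $[2^0:0]=0$ and $[2^0:1]=1$. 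Next, $[2^n:2^n-1]=n$ is exactly the ``in particular'' clause of Lemma \ref{Lem_FRC-formula1} (1), while $[2^n:1]=1$ follows either from relation (2) listed below Fig. 2-1 (so $[2^n:1]=[2^{n-1}:1]=\cdots=[2^1:1]=1$) or from Lemma \ref{Lem_FRC-formula1} (2) with $(m,n,a)=(0,0,n)$. Assembling these four values yields $\begin{pmatrix}1&0\\n&1\end{pmatrix}$.

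For the second matrix, the design $\{2^n-1\}_n=\underbrace{1\cdots1}_n$ has order $m=2^n-1$, and its four entries are precisely the conjugates of the previous ones: $[2^n:m+1]=[2^n:2^n]=1$, $\ [2^n:m]=[2^n:2^n-1]=n$, $\ [2^n:2^n-(m+1)]=[2^n:0]=0$, and $[2^n:2^n-m]=[2^n:1]=1$. Substituting the same four boundary values then gives $\begin{pmatrix}1&n\\0&1\end{pmatrix}$, as claimed.

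There is no substantive obstacle here; the lemma is a routine computation once the relevant SDI values are identified. The only points requiring (trivial) care are the two boundary identities $[2^n:0]=0$ and $[2^n:2^n]=1$, which need a one-line induction rather than a single appeal to an earlier result, and the degenerate case $n=0$, where both designs collapse to the empty design $\varepsilon$ and both formulas correctly return $U(\varepsilon)=\begin{pmatrix}1&0\\0&1\end{pmatrix}$.
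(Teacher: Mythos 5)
Your proposal is correct and follows essentially the same route as the paper, whose entire proof is a one-line appeal to Lemma \ref{Lem_FRC-formula1}: you simply spell out the four boundary values $[2^n:0]=0$, $[2^n:1]=1$, $[2^n:2^n-1]=n$, $[2^n:2^n]=1$ that the paper leaves implicit, including the correct handling of the degenerate case $n=0$.
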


\begin{proof}
By Lemma \ref{Lem_FRC-formula1} (1), we have the result. 
\end{proof}

\begin{Lem}
\label{Lem_Modular Matrix}
Let $a, b, c$ and $d$ be non-negative integers such that $ad-b c=1$ and $(a, b, c, d) \neq (1, 0, 0, 1)$. Then we have the following: \\
$(1)$ \, $a \ge 1$ and $d \ge 1$. \\
$(2)$ \, If $b=0$ or $c=0$, then $a=d=1$. \\
$(3)$ \, {\rm  (i)} \, If $b \neq 0$, then $a/b >c/d$. \\
{\rm  (ii)} \, If $c \neq 0$, then $a/c >b/d$. \\
$(4)$ \, $a \ge b, c \ge d$ or $a \le b, c \le d$. \\
\end{Lem}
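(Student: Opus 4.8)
The plan is to derive all four parts from the single relation $ad-bc=1$ together with the non-negativity of the entries, treating the parts in the order given. For (1) I would argue by contradiction: if $a=0$ then $ad-bc=-bc=1$, forcing $bc=-1$, impossible for non-negative integers; the identical computation with $d=0$ again yields $-bc=1$. Hence $a\ge1$ and $d\ge1$. For (2), if $b=0$ or $c=0$ then $bc=0$, so $ad=1$, and since $a,d$ are non-negative integers this forces $a=d=1$.

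For (3) I would start from $ad-bc=1>0$, i.e. $ad>bc$. In (i), $b\neq0$ together with $d\ge1$ from part (1) gives $bd>0$, so dividing $ad>bc$ through by $bd$ yields $a/b>c/d$. Symmetrically, in (ii), $c\neq0$ and $d\ge1$ give $cd>0$, and dividing $ad>bc$ by $cd$ yields $a/c>b/d$.

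The substantive part is (4), which I would prove by ruling out the two ``mixed'' configurations. The asserted disjunction $(a\ge b,\,c\ge d)$ or $(a\le b,\,c\le d)$ fails only if either $a>b$ and $c<d$, or $a<b$ and $c>d$. In the first case the integrality gives $a\ge b+1$ and $d\ge c+1$, so $ad\ge(b+1)(c+1)=bc+b+c+1$, whence $1=ad-bc\ge b+c+1$; this forces $b=c=0$, and then $ad=1$ gives $(a,b,c,d)=(1,0,0,1)$, the excluded matrix. In the second case $b\ge a+1$ and $c\ge d+1$, so $bc\ge(a+1)(d+1)=ad+a+d+1$, giving $bc-ad\ge a+d+1\ge3$ by part (1), which contradicts $bc-ad=-1$. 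Since both mixed cases are impossible, the disjunction holds.

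The only delicate point is the bookkeeping in (4): one must notice that the offending configuration $a>b,\,c<d$ is exactly the boundary case that the hypothesis $(a,b,c,d)\neq(1,0,0,1)$ is present to exclude, so the excluded identity matrix is not an oversight but precisely the equality case of the inequality $ad-bc\ge b+c+1$. Everything else is routine manipulation of the determinant relation, so I expect no further obstacle.
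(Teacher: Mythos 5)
Your proof is correct. Parts (1)--(3) coincide with the paper's argument up to cosmetic rearrangement: the paper gets (1) directly from $ad=bc+1\ge 1$ rather than by contradiction, and proves (3) by computing the difference $a/b-c/d=1/(bd)$, which is the same computation as your division of $ad>bc$ by $bd$ (resp.\ $cd$). In (4), however, you take a genuinely different route. You negate the disjunction, reduce to the two mixed orderings $a>b,\ c<d$ and $a<b,\ c>d$, and kill both by integrality: $a\ge b+1,\ d\ge c+1$ gives $1=ad-bc\ge b+c+1$, forcing $b=c=0$ and hence the excluded tuple $(1,0,0,1)$, while $b\ge a+1,\ c\ge d+1$ gives $bc-ad\ge a+d+1\ge 3$, contradicting $bc-ad=-1$. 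The paper instead leans on part (3): it treats $b=0$ separately via (2), and for $b>0$ splits according to the position of $1$ relative to the fractions $a/b>c/d$; in the borderline case $a/b\ge 1\ge c/d$ it squeezes $1=ad-bc\ge a$ to force $a=b=1$, $d=c+1$ (and symmetrically in the other subcase). Your version buys a shorter, division-free argument with no separate $b=0$ case, and it pinpoints the role of the hypothesis $(a,b,c,d)\neq(1,0,0,1)$ as exactly the equality case of $ad-bc\ge b+c+1$; the paper's version yields a bit more structural information in passing (the borderline matrices are explicitly determined), but the two arguments are otherwise of equal strength and both complete.
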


\begin{proof}
(1) Since $ad=bc+1 \ge 1$, we have the result. \\
(2) If $b=0$ or $c=0$, then $ad=1$ and $a=d=1$. \\
(3) (i) By (1), $d > 0$. Suppose $b > 0$. Since $\cfrac{a}{\,b\,}-\cfrac{c}{\,d\,}=\cfrac{ad-bc}{bd}=\cfrac{1}{bd} > 0$, we have the result. \\
(ii) By the similar way as (i), we have the result. \\
(4) Suppose $b=0$. Then by (2) and the assumption $(a, b, c, d) \neq (1, 0, 0, 1)$, we have $a=d=1$ and $c \ge 1$.
That is, $a \ge b, c \ge d$ hold. 

Suppose $b > 0$. If $1 \ge a/b > c/d$ or $ a/b > c/d \ge 1$, then we have the result.
Hence we suppose $a/b \ge 1 \ge c/d$. Then by (3) (i), we have $a \ge b, d \ge c+1$ or $a \ge b+1, d \ge c$. \\
Suppose $a \ge b, d \ge c+1$. Then $1=ad-bc \ge a(c+1)-ac=a >0$.
Hence we have $a=b=1$ and $d=c+1$. That is, $a \le b, c \le d$ hold. The case $a \ge b+1, d \ge c$ can be shown in the same way. 
\end{proof}

The following theorem states that the set of finite designs has one to one correspondence with the set of unimodular matrices with non-negative entries.
We call it the {\it Matrix Representation Theorem}.

\begin{Thm} {\rm (Matrix Representation Theorem)} \\
\label{Thm_Modular Matrix_Depth Order}
For four non-negative integers $a, b, c$ and $d$ with $ad-bc=1$, 
there exists a unique pair of two non-negative integers $(m, n)$ such that $0 \le m \le 2^n-1$, and 
\[
U(2^n : m)=U(\{m\}_n)=
\left(
\begin{matrix}
a &b\\
c &d
\end{matrix}
\right).
\]
\end{Thm}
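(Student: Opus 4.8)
The plan is to prove existence and uniqueness separately, with the bulk of the work being existence, which I would establish by strong induction on the entry-sum $a+b+c+d$ (equivalently on the size of the matrix). The key structural fact I would exploit is Lemma \ref{Lem_Modular Matrix} (4), which says any admissible matrix satisfies either $a \ge b,\ c \ge d$ or $a \le b,\ c \le d$. This dichotomy is exactly what lets me peel off a generator: in the first case I would subtract the right column from the left column, and in the second case subtract the left column from the right column, each step reducing the entry-sum while preserving both the determinant condition $ad-bc=1$ and non-negativity of all entries.

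For the induction, the base case is the identity matrix $(a,b,c,d)=(1,0,0,1)$, which corresponds to the empty design $\varepsilon=\{0\}_0$ by Definition \ref{Def_Modular Matrix PPP}. For the inductive step, suppose the matrix is not the identity. First I would use Lemma \ref{Lem_Modular Matrix} to locate the appropriate column operation. Concretely, the recursion in Definition \ref{Def_SDA} together with the SDM definition suggests that prepending a digit to a design corresponds to left-multiplication (or, read from the design side, right-multiplication) by one of the two generator matrices $U(0)=\left(\begin{smallmatrix}1&0\\1&1\end{smallmatrix}\right)$ and $U(1)=\left(\begin{smallmatrix}1&1\\0&1\end{smallmatrix}\right)$ from Lemma \ref{HFNJWTTN} with $n=1$. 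So I would show that any admissible non-identity matrix $M$ factors as $M=G\,M'$ where $G$ is one of these two generators and $M'$ is again admissible with strictly smaller entry-sum; by the inductive hypothesis $M'=U(\{m'\}_{n'})$, and I would then verify that $G\cdot U(\{m'\}_{n'})=U(\{m\}_n)$ for the design $\{m\}_n$ obtained by prepending the corresponding digit. This reduction-to-a-design bookkeeping is precisely the content already developed through Lemma \ref{HFNJWTTN} and the continued-fraction machinery of Section 4, so the digit-prepending identity should follow from the recursions in Definition \ref{Def_SDA}.

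For uniqueness, I would argue that the map $\{m\}_n \mapsto U(\{m\}_n)$ is injective. The cleanest route is to observe that the $(1,2)$-entry is $[2^n:m]$ and the $(2,2)$-entry is $[2^n:2^n-m]$, so recovering the design amounts to recovering the pair $(m,n)$ from the ratio $[2^n:m]/[2^n:2^n-m]$. By Corollary \ref{Thm_RRC_Continued Fraction}, for coprime $a,b$ this ratio determines a \emph{unique} pair $(m,n)$ with $m$ odd and $1 \le m \le 2^n-1$; combined with the reduction relations $[2^n:2m]=[2^n:m]$ that normalize even orders, and Corollary \ref{CorCoprime} guaranteeing the relevant entries are coprime, the full matrix pins down $(m,n)$ uniquely. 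Alternatively, uniqueness falls out of the induction itself: the dichotomy in Lemma \ref{Lem_Modular Matrix} (4) is mutually exclusive except at the identity (where both $a\ge b$ and $a \le b$ force $b=0$), so the generator $G$ peeled off at each stage is uniquely determined, giving a unique factorization into generators and hence a unique design.

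The main obstacle I anticipate is the inductive step's bookkeeping: verifying that the column operation dictated by Lemma \ref{Lem_Modular Matrix} (4) corresponds \emph{exactly} to prepending a single digit at the design level, and that the resulting smaller matrix $M'$ still has the form $U(\{m'\}_{n'})$ with all four entries matching the SDI values at the reduced depth and order. The subtlety is that a single column subtraction should correspond to one digit, but the design presentation groups equal consecutive digits into blocks $\underbrace{1\cdots 1}_{k_0}\underbrace{0\cdots 0}_{k_1}\cdots$, so I must confirm the per-digit reduction is consistent with the block structure and with the parity/terminal-design conventions at the boundary (the transition from $m$ even to $m$ odd, handled via $[2^n:2m]=[2^n:m]$). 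Once that correspondence is pinned down, both existence and uniqueness follow, and the theorem is the matrix-side mirror of the Design Representation Theorem \ref{Thm_FRC_Order}.
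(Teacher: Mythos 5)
Your proposal is correct, but it proves the theorem by a genuinely different route than the paper. The paper's proof is not inductive: it applies the Design Representation Theorem \ref{Thm_FRC_Order} to the coprime second column $(b,d)$ to obtain the unique odd pair $(m_0,n_0)$ with $[2^{n_0}:m_0]=b$ and $[2^{n_0}:2^{n_0}-m_0]=d$, identifies $(x_0,y_0)=([2^{n_0}:m_0+1],[2^{n_0}:2^{n_0}-(m_0+1)])$ as the minimal non-negative solution of $dx-by=1$, chooses the unique $k\ge 0$ with $(a,c)=(x_0+kb,\,y_0+kd)$, and concludes $m=2^km_0$, $n=n_0+k$ via Lemma \ref{Lem_FRC-formula1}, the integer $k$ accounting exactly for the trailing zeros of the design; the case $b=0$ is treated separately. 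Your Euclidean descent on the entry sum, peeling one generator per digit, instead factors the matrix in the monoid generated by $U(\{0\}_1)$ and $U(\{1\}_1)$. What your route buys: a unified treatment of $b=0$, uniqueness falling out of the fact that the two cases of Lemma \ref{Lem_Modular Matrix} (4) cannot hold simultaneously (both would force $a=b$ and $c=d$, contradicting $ad-bc=1$), and as a by-product a proof of the freeness of $\varGamma$ that the paper only asserts in Definition \ref{Def_Design of Matrix}. What the paper's route buys: the design and $(m,n)$ in closed form from one run of the Euclidean algorithm, rather than digit by digit.

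Two points need care in your write-up. First, orientation: Lemma \ref{Lem_Modular Matrix} (4) compares \emph{columns}, and the column subtractions you describe correspond to right multiplication, i.e.\ $M=M'\,U(d)$, which peels the \emph{last} digit of the design (appending a digit is right multiplication); your displayed factorization $M=G\,M'$ is left multiplication, which peels the \emph{first} digit and needs the row dichotomy, obtained by applying Lemma \ref{Lem_Modular Matrix} (4) to the transpose. Either orientation works, but you must match the dichotomy to the factorization consistently. Second, you should not invoke Design Composition Theorem II (Theorem \ref{Thm_The Deviding Formula of Matrix}) for the one-digit step, since it is proved only in Section 6; instead verify directly from Definition \ref{Def_SDA} that $U(\{2m\}_{n+1})=U(\{m\}_n)U(\{0\}_1)$ and $U(\{2m+1\}_{n+1})=U(\{m\}_n)U(\{1\}_1)$, a two-line computation with the recursions, which keeps the argument self-contained at this point of the paper. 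Finally, your first uniqueness route is incomplete as stated: the ratio $b/d$ via Corollary \ref{Thm_RRC_Continued Fraction} determines only the odd part $(m_0,n_0)$, and the trailing-zero count must still be recovered from the first column; your second route, uniqueness of the generator word, is complete and should be the one you keep.
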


\begin{proof}
By Lemma \ref{Lem_Modular Matrix} (1), we have $d > 0$. \\
(1) Suppose $b > 0$. 
Then there exists a unique pair of two non-negative integers $(x_0, y_0)$ such that $dx_0-by_0=1, 0 \le x_0 \le b$ and $0 \le y_0 \le d$.
Since $b$ and $d$ are coprime, any non-negative integral solution of $dx-by=1$ is of the form $(x, y)=(x_0+sb, y_0+sd)$,
where $s$ is a non-negative integer. 
In particular, there exists a unique non-negative integer $k$ such that $(a, c)=(x_0+kb, y_0+kd)$.
By Theorem \ref{Thm_FRC_Order} (1), there exists a unique pair of two non-negative integers $(m_0, n_0)$
such that $m_0$ is odd, $1 \le m_0 \le 2^{n_0}-1, [2^{n_0} : m_0]=b$ and $[2^{n_0} : 2^{n_0}-m_0]=d$.
Here, we set
$
\{m_0\}_{n_0}=
  \underbrace{1 \cdots 11}_{\tiny k_0}
  \underbrace{0 \cdots 00}_{\tiny k_1}
  \cdots
  \underbrace{0 \cdots 00}_{\tiny k_{l-2}}
  \underbrace{1 \cdots 11}_{\tiny k_{l-1}},
$
where $l$ is odd and $k_{l-1} \ge 1$.
By Theorem \ref{Thm_det=1} and Corollary \ref{BESHSWQA} (2), 
we have $(x_0, y_0)=([2^{n_0} : m_0+1], [2^{n_0} : 2^{n_0}-(m_0+1)])$.
Hence, by Lemma \ref{Lem_FRC-formula1} (1) and (2), we have
\[
\begin{split}
&a=x_0+kb=[2^{n_0} : m_0+1]+k[2^{n_0} : m_0]=[2^{n_0+k} : 2^km_0+1], \\
&c=y_0+kd=[2^{n_0} : 2^{n_0}-(m_0+1)]+k[2^{n_0} : 2^{n_0}-m_0]=[2^{n_0+k} : 2^{n_0+k}-(2^km_0+1)], \\
&b=[2^{n_0} : m_0]=[2^{n_0+k} : 2^km_0], \\
&d=[2^{n_0} : 2^{n_0}-m_0]=[2^{n_0+k} : 2^{n_0+k}-2^km_0].
\end{split}
\]
Therefore, by setting $n=n_0+k$ and $m=2^km_0$, we have
\[
U(2^n : m)=
\left(
\begin{matrix}
a &b\\
c &d
\end{matrix}
\right).
\]
(2) If $b=0$, by Lemma \ref{Lem_Modular Matrix} (2), we have $a=d=1$, and we can take $(x_0, y_0)=(1, 0), k=c$ and $(x, y)=(1, s)$. 
Then we can take $(m_0, n_0)=(0, 0)$ (i.e. $[2^{n_0} : m_0]=0$ and $[2^{n_0} : 2^{n_0}-m_0]=1$) uniquely (i.e. minimal $n_0$).
Also in this case, we can take $(x_0, y_0)=([2^{n_0} : m_0+1], [2^{n_0} : 2^{n_0}-(m_0+1)])=(1, 0)$. 
Hence, in the same way as (1), we have  
\[
U(2^{n_0+k} : 2^km_0)=U(2^c : 0)=
\left(
\begin{matrix}
[2^c : 1]        &[2^c : 0] \\
[2^c : 2^c-1]  &[2^c : 2^c]
\end{matrix}
\right)
=\left(
\begin{matrix}
1 & 0\\
c & 1
\end{matrix}
\right).
\]
\end{proof}

\begin{Exm} \label{ZSWJYTS}
{\rm We observe the designs of the following matrices. 
\[
\left(
\begin{matrix}
5 & 7 \\
2 & 3
\end{matrix}
\right)
=
U(11001),
\quad
\left(
\begin{matrix}
8 & 3 \\
5 & 2
\end{matrix}
\right)
=
U(10100),
\]
where 11001 is the Euclidean design of 7 generated by 3
and 10101 is the Euclidean design of 8 generated by 5.
Further from Theorem \ref{Thm_Modular Matrix_Depth Order}, we have
\[
\left(
\begin{matrix}
1 & b \\
0 & 1
\end{matrix}
\right)
=U(\underbrace{1 \cdots 11}_{b}), \quad
\left(
\begin{matrix}
1 & 0 \\
c & 1
\end{matrix}
\right)
=U(\underbrace{0 \cdots 00}_{c}), \quad
\left(
\begin{matrix}
1 & 0 \\
0 & 1
\end{matrix}
\right)
=U(\varepsilon).
\]}
\end{Exm}

\begin{Thm}
Suppose
$
U(\{m\}_n)
=U(
     \underbrace{1 \cdots 11}_{\tiny {k_0}}
     \underbrace{0 \cdots 00}_{\tiny {k_1}}
     \cdots
     \underbrace{0 \cdots 00}_{\tiny {k_{l-2}}}
     \underbrace{1 \cdots 11}_{\tiny {k_{l-1}}}
)
=
\left(
\begin{matrix}
 a  &  b \\
 c  &  d
\end{matrix}
\right),
$
where 
$
\{m\}_n=
  \underbrace{1 \cdots 11}_{\tiny {k_0}}
  \underbrace{0 \cdots 00}_{\tiny {k_1}}
  \cdots
  \underbrace{0 \cdots 00}_{\tiny {k_{l-2}}}
  \underbrace{1 \cdots 11}_{\tiny {k_{l-1}}}
$
is the design of the matrix. 
Then we have the following: 
\[
\begin{split}
&{\rm (1)} \,\, 
U(\{m^*\}_n)
=U(\underbrace{1 \cdots 11}_{\tiny {k_{l-1}}}
     \underbrace{0 \cdots 00}_{\tiny {k_{l-2}}}
     \cdots
     \underbrace{0 \cdots 00}_{\tiny {k_1}}
     \underbrace{1 \cdots 11}_{\tiny {k_0}})
=\left(
\begin{matrix}
 d  &  b \\
 c  &  a
\end{matrix}
\right).  \\
&{\rm (2)} \,\,  
U(\{2^n-(m^*+1)\}_n)
=U(\underbrace{0 \cdots 00}_{\tiny {k_{l-1}}}
     \underbrace{1 \cdots 11}_{\tiny {k_{l-2}}}
     \cdots
     \underbrace{1 \cdots 11}_{\tiny {k_1}}
     \underbrace{0 \cdots 00}_{\tiny {k_0}})
=\left(
\begin{matrix}
 a  &  c \\
 b  &  d
\end{matrix}
\right). \\
&{\rm (3)} \,\, 
U(\{2^n-(m+1)\}_n)
=U(\underbrace{0 \cdots 00}_{\tiny {k_0}}
     \underbrace{1 \cdots 11}_{\tiny {k_1}}
     \cdots
     \underbrace{1 \cdots 11}_{\tiny {k_{l-2}}}
     \underbrace{0 \cdots 00}_{\tiny {k_{l-1}}})
=\left(
\begin{matrix}
 d  &  c \\
 b  &  a
\end{matrix}
\right). \qquad
\end{split}
\]
\end{Thm}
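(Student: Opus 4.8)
The plan is to reduce all three identities to the definition of $U(\{m\}_n)$ (Definition~\ref{Def_Modular Matrix PPP}) together with Theorem~\ref{Thm_m^*} and one elementary fact about binary complementation, handling the parts in the order (1), (3), (2). First I would record the four entries of the given matrix:
\[
a=[2^n:m+1],\quad b=[2^n:m],\quad c=[2^n:2^n-(m+1)],\quad d=[2^n:2^n-m].
\]
Every claim then becomes a matter of writing down $U$ for the relevant order, substituting the shifted index into the definition, and rewriting each entry. The displayed designs must also be checked, but each is a transparent word operation on $\{m\}_n$.

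For (1), the inverse design $\{m^*\}_n$ is by Definition~\ref{ASWQHOPM} exactly the block-reversal displayed, so only the matrix needs verification. Applying the definition of $U$ to the order $m^*$ and invoking the four identities of Theorem~\ref{Thm_m^*} in turn gives $[2^n:m^*+1]=[2^n:2^n-m]=d$, $[2^n:m^*]=[2^n:m]=b$, $[2^n:2^n-(m^*+1)]=[2^n:2^n-(m+1)]=c$, and $[2^n:2^n-m^*]=[2^n:m+1]=a$, which is precisely $\left(\begin{smallmatrix}d&b\\c&a\end{smallmatrix}\right)$.

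For (3), the key observation is that $2^n-(m+1)=(2^n-1)-m$, and since $2^n-1$ is the all-ones word of length $n$ (recall $0\le m\le 2^n-1$), this is just the bitwise complement of $\{m\}_n$; hence its design is obtained by exchanging every $1$-block with a $0$-block of the same length, which is the displayed word. Writing $m''=2^n-(m+1)$, so that $m''+1=2^n-m$, $\;2^n-(m''+1)=m$, and $2^n-m''=m+1$, direct substitution into the definition of $U$ turns the four entries into $d,c,b,a$, giving $\left(\begin{smallmatrix}d&c\\b&a\end{smallmatrix}\right)$.

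For (2), the cleanest route is to note that $2^n-(m^*+1)$ is the bitwise complement of the reversal of $\{m\}_n$, which is the displayed word, and that it is obtained by applying the conjugation of (3) to the design $m^*$ of (1). Since part (1) gives $U(\{m^*\}_n)=\left(\begin{smallmatrix}d&b\\c&a\end{smallmatrix}\right)$, applying the transformation of part (3) to this matrix (with its entries now playing the roles of $a,b,c,d$) produces $\left(\begin{smallmatrix}a&c\\b&d\end{smallmatrix}\right)$, the transpose of the original matrix; alternatively one verifies the four entries directly from Theorem~\ref{Thm_m^*} with $m'=2^n-(m^*+1)$. I do not expect a genuine obstacle, since the content is entirely bookkeeping; the one place demanding care is the index juggling, where each substitution $m\mapsto m^*$ or $m\mapsto 2^n-(m+1)$ must be carried consistently through $m+1$, $2^n-m$, and $2^n-(m+1)$ before the correct clause of Theorem~\ref{Thm_m^*} is applied, and where one must confirm that the displayed block patterns are indeed the reversal, the complement, and the complement-of-reversal of $\{m\}_n$.
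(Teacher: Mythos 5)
Your proposal is correct and is essentially the paper's own argument: the paper proves this theorem in one line, ``By Theorem \ref{Thm_m^*}, we have the result,'' and your entry-by-entry bookkeeping (substituting the shifted orders into Definition \ref{Def_Modular Matrix PPP} and rewriting via the four identities of Theorem \ref{Thm_m^*}) is exactly the computation that one-liner compresses. Your small variations --- doing (3) by pure index substitution without Theorem \ref{Thm_m^*}, and obtaining (2) by composing (1) with (3) --- are harmless streamlinings of the same route, and your checks of the block-word claims (reversal, complement, complement-of-reversal) are accurate.
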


\begin{proof}
By Theorem \ref{Thm_m^*}, we have the result.
\end{proof}

\section{Definition of the assembly function}

This section is the main part of the paper.
We define the {\it assembly function} and show some fundamental properties.

\begin{Def}
\label{Def_Ichio Function}
{\rm 
Let $S$ be the set of rational numbers of the form $m/2^n$, 
where $m$ and $n$ are two non-negative integers with $0 \le m \le 2^n-1$, and $\mathbb{R}_{ \ge 0}$ the set of non-negative real numbers.
Then we define a map $\mathcal{A} : S \longrightarrow \mathbb{R}_{ \ge 0}$ by
\[
\mathcal{A}\left(\frac{m}{2^n} \right)=\frac{[2^n:m]}{[2^n:2^n-m]}.
\]
Since $[2^{n+1} : 2m]=[2^n : m]$ in general 
(cf. Definition \ref{Def_SDA} (2)), $\mathcal{A}$ is well-defined. 
We call $\mathcal{A}$ the {\it rational assembly function}.
}
\end{Def}

\begin{Thm} \label{KIJLMMKU}
The rational assembly function extends uniquely to a continuous map $[0, 1) \longrightarrow [0, \infty)$,
and the map is strictly increasing and bijective.
\end{Thm}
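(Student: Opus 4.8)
The plan is to establish the three claims—well-definedness of the extension, continuity, and strict monotonicity with bijectivity—in an order that makes each rest on the previous. First I would verify that $\mathcal{A}$ is strictly increasing on its original domain $S$. Given two dyadic rationals $m/2^n < m'/2^{n'}$, after raising to a common denominator I may assume they are $m/2^n < (m+1)/2^n$ with a common depth; then I want to show $\mathcal{A}(m/2^n) < \mathcal{A}((m+1)/2^n)$, i.e. $[2^n:m]/[2^n:2^n-m] < [2^n:m+1]/[2^n:2^n-(m+1)]$. Cross-multiplying, this is exactly the statement that the determinant $[2^n:m+1][2^n:2^n-m]-[2^n:m][2^n:2^n-(m+1)] = 1 > 0$, which is Theorem \ref{Thm_det=1}. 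So strict monotonicity on $S$ is essentially immediate from the unimodularity of the SDM.

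Next I would handle the extension to $[0,1)$ by continuity. Since $S$ is dense in $[0,1)$ and $\mathcal{A}$ is strictly increasing on $S$, the natural definition is $\mathcal{A}(x) = \sup\{\mathcal{A}(s) : s \in S,\ s \le x\}$, or equivalently the common value of the left and right limits through $S$. The key point to check is that this extension has \emph{no jumps}: a monotone function extended from a dense set is continuous precisely when its image is dense in the target with no gaps. Here I would invoke Corollary \ref{Thm_RRC_Continued Fraction}, which tells us that every positive rational $a/b$ in lowest terms equals $[2^n:m]/[2^n:2^n-m]$ for a unique odd $m$ with $1 \le m \le 2^n-1$; hence the image $\mathcal{A}(S)$ is exactly $\mathbb{Q}_{\ge 0}$, which is dense in $[0,\infty)$. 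A monotone surjection onto a dense-image target with the target being an interval forces continuity: any jump discontinuity would create an open interval in $[0,\infty)$ missed by the image, contradicting density of $\mathbb{Q}_{\ge 0}$.

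For bijectivity I would argue in two directions. Injectivity is immediate from strict monotonicity (which passes from $S$ to all of $[0,1)$ by a standard limiting argument, using that strictness cannot be destroyed by taking monotone limits once we know there are no flat pieces—and flatness would again contradict density of the rational image). Surjectivity onto $[0,\infty)$ follows because $\mathcal{A}$ is continuous, $\mathcal{A}(0)=0$, $\mathcal{A}(x)\to\infty$ as $x\to 1^-$ (since $[2^n:2^n-1]/[2^n:1]=n/1\to\infty$ along $x=(2^n-1)/2^n \to 1$), and a continuous monotone function on an interval attains every value between its infimum and supremum by the intermediate value theorem. Uniqueness of the extension is clear since two continuous functions agreeing on the dense set $S$ must coincide.

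The main obstacle I anticipate is proving continuity cleanly, i.e. ruling out jump discontinuities in the monotone extension. The technically delicate part is arguing that strict monotonicity on the dense set $S$, combined with density of the image $\mathbb{Q}_{\ge 0}$ in $[0,\infty)$, forces genuine continuity rather than merely lower- or upper-semicontinuity. I would make this precise by the contrapositive: if $\mathcal{A}$ had a jump at some $x_0$, the open interval $(\lim_{x\to x_0^-}\mathcal{A}(x),\ \lim_{x\to x_0^+}\mathcal{A}(x))$ would be disjoint from the closure of $\mathcal{A}(S)$, contradicting that $\mathcal{A}(S)=\mathbb{Q}_{\ge 0}$ is dense. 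Getting the endpoints of that gap to align correctly with the dyadic approximations of $x_0$—so that the contradiction is airtight—is where the argument needs care, and Corollary \ref{Thm_RRC_Continued Fraction} is the essential input that closes it.
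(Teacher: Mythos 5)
Your proposal is correct and follows essentially the same route as the paper's proof: strict monotonicity on $S$ via the unimodularity identity of Theorem \ref{Thm_det=1}, surjectivity onto $\mathbb{Q}_{\ge 0}$ via Corollary \ref{Thm_RRC_Continued Fraction}, and the density of $S$ in $[0,1)$ and of $\mathbb{Q}_{\ge 0}$ in $[0,\infty)$ to conclude. You merely make explicit the no-jump and intermediate-value details that the paper compresses into its final sentence, and your $\mathcal{A}((2^n-1)/2^n)=n\to\infty$ endpoint check is consistent with Lemma \ref{Lem_FRC-formula1}.
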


\begin{proof}
We assume the setting in Definition \ref{Def_Ichio Function}. 

Firstly, we show that $\mathcal{A} : S \longrightarrow \mathbb{R}_{ \ge 0}$ is strictly increasing. 
Suppose that $0 \le m < m+1 \le 2^n-1$. Since
\[
\begin{split}
\mathcal{A}\left(\frac{m+1}{2^n} \right)
-\mathcal{A}\left(\frac{m}{2^n} \right)&=\frac{[2^n : m+1]}{[2^n : 2^n-(m+1)]}-\frac{[2^n : m]}{[2^n : 2^n-m]} 
\end{split}
\]
\[
\begin{split}
&=\frac{[2^n : m+1][2^n : 2^n-m]-[2^n : m][2^n : 2^n-(m+1)]}{[2^n : 2^n-m][2^n : 2^n-(m+1)]} \\[5pt]
&=\frac{1}{[2^n : 2^n-m][2^n : 2^n-(m+1)]} > 0
\end{split}
\]
by Theorem \ref{Thm_det=1}, $\mathcal{A}$ is strictly increasing. 

By Theorem \ref{Thm_FRC_Order} (Corollary \ref{Thm_RRC_Continued Fraction}), $\mathcal{A}$ is surjective.
Since $S$ is dense in $[0, 1)$ and $\mathbb{Q}_{ \ge 0}$ is dense in $\mathbb{R}_{ \ge 0}=[0, \infty)$, we have the result.
\end{proof}

\begin{Def}
{\rm We call the extended map of $\mathcal{A}$ the {\it assembly function}, 
and use the same notation $\mathcal{A}$.
Further we define $\mathcal{A}(1)=\infty$.
}
\end{Def}

The assembly function has a characteristic graph shown in Figure 6-1.

%\begin{figure}[!h]
%\centering
%\includegraphics[width=45cm, height=5cm, keepaspectratio]{F関数.eps}
%\begin{center}
%\small{Fig. 6-2}
%\end{center}
%\end{figure}

\begin{figure}[htbp]
\begin{center}
\includegraphics[scale=0.3]{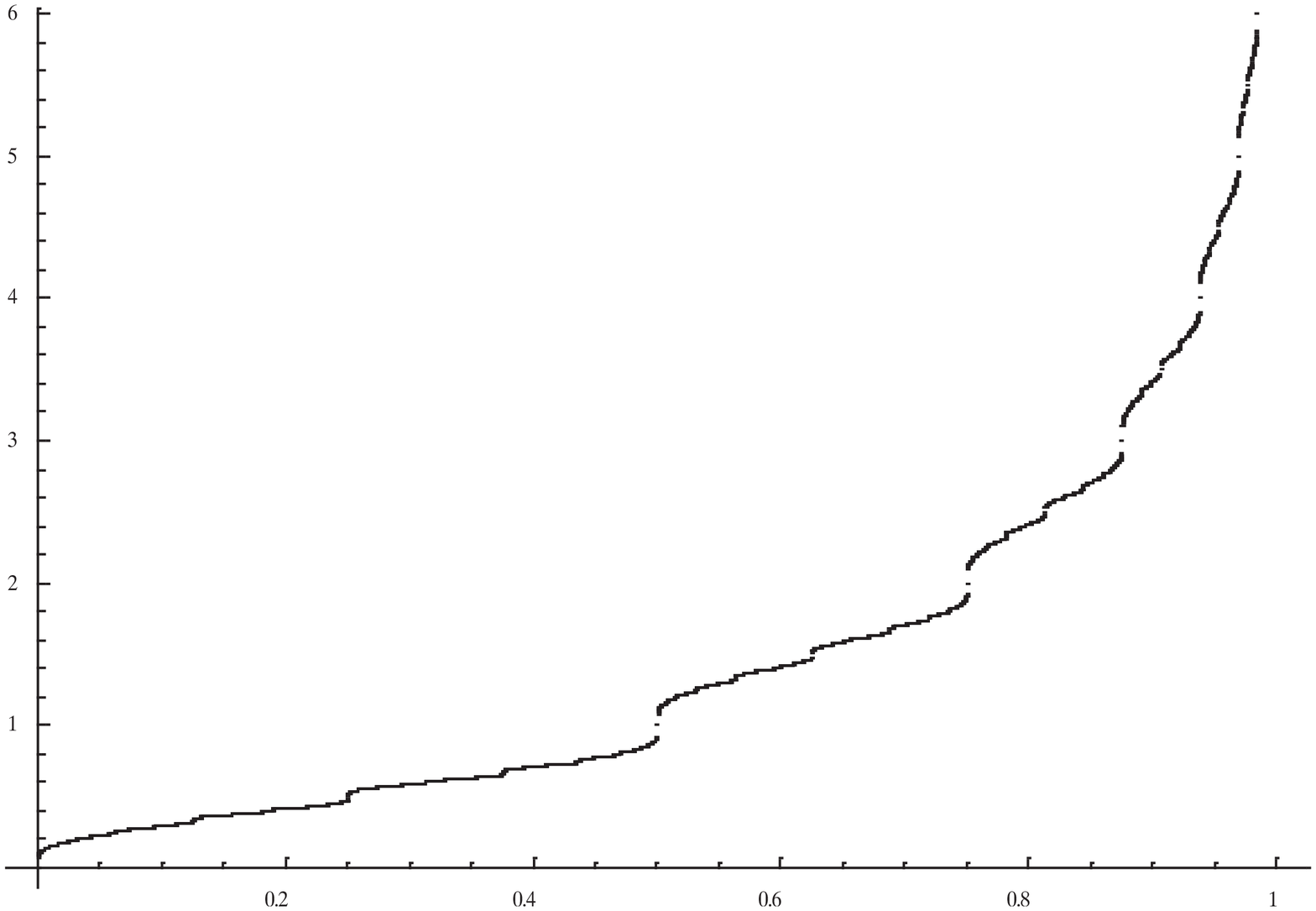} 

\small{Fig. 6-1}
\end{center}
\end{figure}

\begin{Lem} \label{SKWIDDGJGE} 
%For any real number $\theta \in [0,1]$, 
%\[
%\mathcal{A}(\theta)\!=\!
%\left\{
%\begin{split}
%&0 \,\,\, (\theta=0), \\
%&CF(k_0, k_1, \cdots, k_{l-1})
%\,\,\, (\theta=\underbrace{1 \cdots 1}_{\tiny k_0}
%                    \underbrace{0 \cdots 0}_{\tiny k_1}
 %                   \cdots
 %                   \underbrace{1 \cdots 1}_{\tiny k_{l-1}},\,\,\, k_0 \ge 0, k_i \ge 1 (1 \le i \le l-1)), \\
%&CF(k_0, k_1, \cdots, k_{l-1}, \cdots)
%\,\,\, (\theta=\underbrace{1 \cdots 1}_{\tiny k_0}
%                    \underbrace{0 \cdots 0}_{\tiny k_1}
%                    \cdots
%                    \underbrace{1 \cdots 1}_{\tiny k_{l-1}}
%                    \cdots,\,\,\, k_0 \ge 0, k_i \ge 1 (1 \le i )), \\
%&\infty \,\,\, (\theta=1). 
%\end{split}
%\right.
%\]
%\end{Lem}
For $m$ and $n$ are two non-negative integers with $0 \le m \le 2^n-1$, 
and $m$ is odd,
suppose the design of $[2^n:m]$ is 
$\underbrace{1 \cdots 1}_{\tiny k_0}\underbrace{0 \cdots 0}_{\tiny k_1}
\cdots \underbrace{1 \cdots 1}_{\tiny k_{l-1}}\ 
(k_0 \ge 0, k_i \ge 1\ (1 \le i \le l-1))$.
Then we have
$$\mathcal{A}\left(
\frac{m}{2^n}
\right)
=CF(k_0, k_1, \ldots, k_{l-1}).$$

\end{Lem}

\begin{proof}
By Definition \ref{Lem_FRC_Product2} (3), we have the result.
\end{proof}

\begin{Lem} \label{EGHSRTWED}
Let $\omega=\mathcal{A}(\theta) \,\,\, (\theta \in [0,1])$ be the assembly function. Then we have 
\[
\begin{split}
&(1) \,\, \mathcal{A}(1/2)=1, \,\,\,\, 0 \le \mathcal{A}(\theta) <1 \,\,\, (0 \le \theta <1/2), \,\,\,\, \mathcal{A}(\theta) >1 \,\,\, (\theta > 1/2). \\
&(2) \,\, \mathcal{A}(1-\theta)=\frac{1}{\mathcal{A}(\theta)} \,\,\, (0 \le \theta \le 1). \qquad \qquad \qquad \qquad \qquad \qquad \qquad \qquad \qquad \qquad
\end{split}
\]
\end{Lem}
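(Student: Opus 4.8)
The plan is to prove both parts by reducing everything to the continued-fraction formula $\mathcal{A}(m/2^n)=CF(k_0,k_1,\ldots,k_{l-1})$ established in Lemma \ref{SKWIDDGJGE} (via Theorem \ref{Lem_FRC_Product2} (3)(i)), and then passing to limits to cover all of $[0,1]$ by continuity (Theorem \ref{KIJLMMKU}). For part (1), the key observation is that $\theta=1/2$ corresponds to the design $\{1\}_1=1$, i.e. $m=1,n=1$, so that $\mathcal{A}(1/2)=[2^1:1]/[2^1:1]=1$; alternatively $CF(k_0,\ldots,k_{l-1})$ with the single block $k_0=1$ gives $CF(1)=1$. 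The inequalities then follow immediately from strict monotonicity of $\mathcal{A}$ (Theorem \ref{KIJLMMKU}): since $\mathcal{A}$ is strictly increasing and $\mathcal{A}(1/2)=1$, we get $\mathcal{A}(\theta)<1$ for $\theta<1/2$ and $\mathcal{A}(\theta)>1$ for $\theta>1/2$, with the lower bound $\mathcal{A}(\theta)\ge 0$ coming from $\mathcal{A}(0)=[2^n:0]/[2^n:2^n]=0$.

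For part (2), I would first prove the identity on the dense set $S$ and then extend by continuity. On $S$, the statement $\mathcal{A}(1-\theta)=1/\mathcal{A}(\theta)$ becomes, for $\theta=m/2^n$ with $m$ odd,
\[
\frac{[2^n:2^n-m]}{[2^n:2^n-(2^n-m)]}=\frac{[2^n:2^n-m]}{[2^n:m]}=\left(\frac{[2^n:m]}{[2^n:2^n-m]}\right)^{-1},
\]
since $1-m/2^n=(2^n-m)/2^n$ and the conjugate of the conjugate is the original order. This is essentially immediate from the definition of $\mathcal{A}$ and the symmetry $2^n-(2^n-m)=m$, so no real computation is needed here; the only care required is that $2^n-m$ need not be odd, but Definition \ref{Def_SDA} (and the well-definedness of $\mathcal{A}$) lets us evaluate $\mathcal{A}$ at any dyadic rational regardless of parity.

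The main obstacle is handling the boundary and the infinite value cleanly: at $\theta=0$ we have $\mathcal{A}(0)=0$ and $\mathcal{A}(1)=\infty$, so the identity $\mathcal{A}(1-\theta)=1/\mathcal{A}(\theta)$ must be interpreted using the conventions $1/0=\infty$ and $1/\infty=0$ already adopted for continued fractions in the paper. I would state explicitly that the identity holds on the dense set $S\setminus\{0\}$ by the displayed algebra, observe both sides are continuous functions of $\theta$ on $(0,1)$ (as $\mathcal{A}$ is continuous and nonvanishing there, by part (1) $\mathcal{A}(\theta)>0$ for $\theta>0$), and conclude by density of $S$ in $[0,1)$ together with Theorem \ref{KIJLMMKU} that the identity extends to all $\theta\in(0,1)$; the endpoints $\theta=0,1$ are then verified directly using $\mathcal{A}(0)=0$, $\mathcal{A}(1)=\infty$ and the reciprocal conventions.
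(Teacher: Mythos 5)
Your proposal is correct and takes essentially the same approach as the paper: for (1) compute $\mathcal{A}(1/2)=[2:1]/[2:1]=1$ directly from Definition \ref{Def_Ichio Function} and invoke strict monotonicity (Theorem \ref{KIJLMMKU}); for (2) verify $\mathcal{A}(1-\theta)=[2^n:2^n-m]/[2^n:m]=1/\mathcal{A}(\theta)$ on the dyadic rationals from the definition and extend by continuity. Your extra bookkeeping about the endpoints $\theta=0,1$ and the $1/0=\infty$ convention is slightly more careful than the paper's two-line proof (and note that for $n\ge 1$ odd $m$ actually forces $2^n-m$ odd, so that worry was unnecessary), but it changes nothing of substance.
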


\begin{proof}
(1) By Definition \ref{Def_Ichio Function}, we have
$
\mathcal{A}(1/2)=\cfrac{[2:1]}{[2:1]}=1.
$
By Theorem \ref{KIJLMMKU}, we have the result. \\
(2) For $\theta=m/2^n$, where $m$ and $n$ are positive integers with $1 \le m \le 2^n-1$, 
by Definition \ref{Def_Ichio Function}, we have
\[
\mathcal{A}(1-\theta)=\mathcal{A}\left(\frac{2^n-m}{2^n}\right)=\frac{[2^n : 2^n-m]}{[2^n : m]}=\frac{1}{\mathcal{A}(\theta)}.
\]
By Theorem \ref{KIJLMMKU} (continuity of $\mathcal{A}(\theta)$), we have the result.
\end{proof}

\begin{Def} \label{AELMUQSST}
{\rm
For a finite design 
$
\{m\}_n=
  \underbrace{1 \cdots 11}_{\tiny k_0}
  \underbrace{0 \cdots 00}_{\tiny k_1}
  \cdots
  \underbrace{0 \cdots 00}_{\tiny k_{l-2}}
  \underbrace{1 \cdots 11}_{\tiny k_{l-1}},
$
where $l$ is odd, $k_0 \ge 0, k_i \ge 1\, (i=1, 2, \ldots, l-2), \, k_{l-1} \ge 0$ and $n=\sum_{i=0}^{l-1}k_i$,
we define a binary decimal by
\[
\frac{\{m\}_n}{2^n}=\frac{m}{2^n}=
0.\underbrace{1 \cdots 11}_{\tiny k_0}
  \underbrace{0 \cdots 00}_{\tiny k_1}
  \cdots
  \underbrace{0 \cdots 00}_{\tiny k_{l-2}}
  \underbrace{1 \cdots 11}_{\tiny k_{l-1}},
\]
and call it a {\it design decimal} of $\{m\}_n$. 

For a terminal design $\{2^n\}_n=1\underbrace{0 \cdots 00}_{\tiny n}{}_{\tau}$ with $n \ge 0$, 
we define a binary decimal by
\[
\frac{\{2^n\}_n}{2^n}=\frac{2^n}{2^n}=1,
\]
and call it a {\it design decimal} of $\{2^n\}_n$. 

For an integer sequence $\{k_j\}_{j=0}^{\infty}$ such that $k_0 \ge 0$ and $k_j \ge 1 \, (j=1, 2, \ldots)$,
we define an {\it infinite design} as an infinite sequence of finite designs:
\[
  \underbrace{1 \cdots 11}_{\tiny k_0}
  \underbrace{0 \cdots 00}_{\tiny k_1}
  \cdots
  \underbrace{0 \cdots 00}_{\tiny k_{l-2}}
  \underbrace{1 \cdots 11}_{\tiny k_{l-1}} \cdots \cdots=
\Bigg \{
  \underbrace{1 \cdots 11}_{\tiny k_0}
  \underbrace{0 \cdots 00}_{\tiny k_1}
  \cdots
  \underbrace{0 \cdots 00}_{\tiny k_{2j-1}}
  \underbrace{1 \cdots 11}_{\tiny k_{2j}}
\Bigg \}_{j=0}^{\infty}
\]
and call its $j$-th term the {\it $j$-th segment} of the infinite design.
Then we define a binary decimal by 
\[
\lim_{j \to \infty}\frac
 {\overbrace{1 \cdots 11}^{\tiny k_0}
  \overbrace{0 \cdots 00}^{\tiny k_1}
  \cdots
  \overbrace{0 \cdots 00}^{\tiny k_{2j-1}}
  \overbrace{1 \cdots 11}^{\tiny k_{2j}}}
{2^{\sum_{i=0}^{2j}k_{i}}}=
0.
  \underbrace{1 \cdots 11}_{\tiny k_0}
  \underbrace{0 \cdots 00}_{\tiny k_1}
  \cdots
  \underbrace{0 \cdots 00}_{\tiny k_{l-2}}
  \underbrace{1 \cdots 11}_{\tiny k_{l-1}} \cdots \cdots,
\]
and call it a {\color{black}{\it design decimal}} of 
$
  \underbrace{1 \cdots 11}_{\tiny k_0}
  \underbrace{0 \cdots 00}_{\tiny k_1}
  \cdots
  \underbrace{0 \cdots 00}_{\tiny k_{l-2}}
  \underbrace{1 \cdots 11}_{\tiny k_{l-1}} \cdots \cdots.
$ 

For a design $D$, we denote the design decimal of $D$ by $\theta_D$.
We note that for any real number $\theta$ with $0 \le \theta \le 1$, 
there exists a design $D$ such that $\theta=\theta_D$. 
If $\theta$ is of the form $m/2^n$, a reduced design $D$ is uniquely determined,
and if $\theta$ is not of the form $m/2^n$, an infinite design $D$ is uniquely determined.
Then we call $D$ the {\it design} of $\theta$.
}
\end{Def}

\begin{Def}
\label{Def_Design of Matrix}
{\rm 
Let $\varGamma$ be the set of elements in $SL(2 : \mathbb{Z})$ (the special linear group over $\mathbb{Z}$) with non-negative entries:
\[
\varGamma=\Bigg \{
\left(
\begin{matrix}
a & b \\
c & d
\end{matrix}
\right)
\in SL(2 \, ; \, \mathbb{Z}) \, \Bigg | \, a, b, c, d \in \mathbb{Z}_{\ge 0}
\Bigg \}.
\]
Then $A \in \varGamma$ acts on the set $\varDelta= \big \{ (p, q) \in (\mathbb{Z}_{\ge 0})^2 \,\, | \,\, gcd(p, q)=1 \big \}$ \,
and \, $\mathbb{Q}_{\ge 0} \cup \{ \infty \} \,\, (\infty=1/0)$ as follows:
\[
\qquad A
\left(
\begin{matrix}
p \\
q 
\end{matrix}
\right)
=
\left(
\begin{matrix}
ap+bq \\
cp+dq 
\end{matrix}
\right) \,\,\, \big ((p, q) \in \varDelta \big )
\]
and
\[
Ax=
\left\{
\begin{split}
& \frac{ax+b}{cx+d} \,\,\,(x \in \mathbb{Q}_{\ge 0}), \\[8pt]
& \frac{a}{\,c\,} \qquad \,\,\, (x=\infty).
\end{split}
\right.
\]
Then the action is called a {\it linear fractional transformation}.
Let $\mathcal{I}(S)$ be the set of injections from $S$ to $S$.
The action above defines a map
\[
\varPhi \, : \, \varGamma \longrightarrow \mathcal{I}(\varDelta) \,\,\, \text{and} \,\,\, \mathcal{I}(\mathbb{Q}_{\ge 0} \cup \{ \infty \})
\]
naturally, and the map is a monoid homomorphism (i.e. $\varPhi(BA)=\varPhi(B) \circ \varPhi(A) \,(A, B \in \varGamma)$ and $\varPhi(E)=id.$, 
where $E$ is the unit matrix and {\it id.} is the identity map). 
We note that there is a natural bijection between $\varDelta$ and $\mathbb{Q}_{\ge 0} \cup \{ \infty \}$, and $\varGamma$ is a free monoid generated by 
\[
\left(
\begin{matrix}
1 & 1 \\
0 & 1
\end{matrix}
\right)
\,\,\, \text{and} \,\,\,
\left(
\begin{matrix}
1 & 0 \\
1 & 1
\end{matrix}
\right).
\]
}
\end{Def}

Let $k_0, k_1, \ldots, k_{l-1}$ be integers such that $l$ is odd, $k_0 \ge 0, k_i \ge 1 \, (i=1, 2, \ldots, l-2)$ and $k_{l-1} \ge 0$.
We regard $CF(k_0, k_1, \ldots, k_{l-2}, k_{l-1}+x)$ as a function of $x$, where $x$ is a real variable with $x \ge 0$.
Then it is a continuous function.

\begin{Lem} \label{DDUGEADXC}
For  a finite design
$
\{m\}_n=
  \underbrace{1 \cdots 11}_{\tiny k_0}
  \underbrace{0 \cdots 00}_{\tiny k_1}
  \cdots
  \underbrace{0 \cdots 00}_{\tiny k_{l-2}}
  \underbrace{1 \cdots 11}_{\tiny k_{l-1}}
$\,,
we have
\[
CF(k_0, k_1, \ldots, k_{l-2}, k_{l-1}+x)=\frac{[2^n : m+1]x+[2^n : m]}{[2^n : 2^n-(m+1)]x+[2^n : 2^n-m]}
=U(2^n : m)x.
\]
\end{Lem}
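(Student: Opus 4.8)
The plan is to reduce the whole statement to manipulations of continuants and then read off the matrix action. First I would dispose of the second equality $=U(2^n:m)x$, which is purely definitional: by Definition \ref{Def_Modular Matrix PPP} the matrix $U(2^n:m)$ has first row $([2^n:m+1],[2^n:m])$ and second row $([2^n:2^n-(m+1)],[2^n:2^n-m])$, and by the linear fractional transformation of Definition \ref{Def_Design of Matrix} its action on the real variable $x$ is exactly $\frac{[2^n:m+1]x+[2^n:m]}{[2^n:2^n-(m+1)]x+[2^n:2^n-m]}$. Hence all the content lies in the first equality.

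The crux is the classical identity writing a continued fraction as a ratio of continuants, $CF(c_0,c_1,\ldots,c_{p-1})=[c_0,c_1,\ldots,c_{p-1}]/[c_1,c_2,\ldots,c_{p-1}]$, understood as an identity of rational functions in the entries (with the convention $[\varepsilon]=1$ handling the length-one case). I would obtain this either by a short induction matching the recursion $CF(c_0,\ldots)=c_0+1/CF(c_1,\ldots)$ of Definition \ref{Def_Continued Fraction} against the continuant recursion of Lemma \ref{Lem_Special Matrix}(3), or by taking the telescoping ratio of the product formula in Lemma \ref{Thm_FRC_Product1}(1)(i). Applying it to the sequence $(k_0,\ldots,k_{l-2},k_{l-1}+x)$ yields $CF(k_0,\ldots,k_{l-2},k_{l-1}+x)=\frac{[k_0,\ldots,k_{l-2},k_{l-1}+x]}{[k_1,\ldots,k_{l-2},k_{l-1}+x]}$.

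Next I would apply Lemma \ref{Lem_Special Matrix}(8) to peel off the variable $x$ in each continuant, obtaining numerator $[k_0,\ldots,k_{l-2}]\,x+[k_0,\ldots,k_{l-1}]$ and denominator $[k_1,\ldots,k_{l-2}]\,x+[k_1,\ldots,k_{l-1}]$. Finally I would substitute the four continuant evaluations supplied by Theorem \ref{Lem_4FRC_Matrix}, namely $[k_0,\ldots,k_{l-2}]=[2^n:m+1]$, $[k_0,\ldots,k_{l-1}]=[2^n:m]$, $[k_1,\ldots,k_{l-2}]=[2^n:2^n-(m+1)]$, and $[k_1,\ldots,k_{l-1}]=[2^n:2^n-m]$, which delivers precisely the asserted quotient.

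The main obstacle I expect is not any single computation but keeping the hypotheses and degenerate cases aligned. Lemma \ref{Thm_FRC_Product1}(1)(i) requires the last entry to be $\ge 1$, whereas here $k_{l-1}$ may equal $0$ and $x$ may be small; I would circumvent this by observing that both sides are rational functions of $x$ that already agree on the range $x\ge 1$ (where the product formula applies), hence agree for all admissible $x$, or else simply invoke the positivity-free inductive proof of the continuant-ratio identity. I would also treat $l=1$ on its own, where $[\varepsilon]=1$ together with the values $[2^n:2^n-1]=n$, $[2^n:1]=1$, $[2^n:2^n]=1$, $[2^n:0]=0$ from Theorem \ref{Lem_4FRC_Matrix} collapse the quotient to $x+n=CF(n+x)$, confirming that the uniform formula survives the degenerate case.
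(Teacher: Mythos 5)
Your proposal is correct and follows essentially the same route as the paper: the paper likewise reduces to the continuant-ratio identity $CF(x_0,\ldots,x_{l-1}+x)=[x_0,\ldots,x_{l-1}+x]/[x_1,\ldots,x_{l-1}+x]$ (obtained by generalizing Lemma \ref{Thm_FRC_Product1}(1) to real variables, i.e.\ your telescoping-product option), then peels off $x$ with Lemma \ref{Lem_Special Matrix}(8) and substitutes the four evaluations of Theorem \ref{Lem_4FRC_Matrix}, treating $l=1$ separately and flagging the same degenerate case ($l=3$, last entry $0$, handled via $[x_1,0]=[\varepsilon]=1$) that you dispose of by rational-function agreement.
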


\begin{proof}
Suppose $l=1$. Then we have $k_0=n \, (\ge 0), m=2^n-1$ and $CF(k_0+x)=x+n$.
By Lemma \ref{Lem_FRC-formula1} (1), the equation holds. 

Suppose $l \ge 3$. We replace every $k_i \, (i=0, 1, \ldots, l-1)$ with a real variable $x_i$, 
where $x_0 \ge 0, x_i \ge 1 \, (i=1, 2, \ldots, l-2)$ and $x_{l-1} \ge 0$.
Then we can generalize Definition \ref{Def_Continued Fraction} and Lemma \ref{Thm_FRC_Product1} for variable cases.
By generalized Lemma \ref{Thm_FRC_Product1} (1), we have
\[
CF(x_0, x_1, \ldots, x_{l-2}, x_{l-1}+x)=\frac{[x_0, x_1, \ldots, x_{l-2}, x_{l-1}+x]}{[x_1, x_2, \ldots, x_{l-2}, x_{l-1}+x]}
\]
(The case $l=3$ and $x_2=x=0$ is most sensitive.
However, the equation also holds by regarding $[x_1, 0]=[\varepsilon]=1$).
By Lemma \ref{Lem_Special Matrix} (8), we have
\[
CF(x_0, x_1, \ldots, x_{l-2}, x_{l-1}+x)=\frac{[x_0, x_1, \ldots, x_{l-2}]x+[x_0, x_1, \ldots, x_{l-2}, x_{l-1}]}{[x_1, x_2, \ldots, x_{l-2}]x+[x_1, x_2, \ldots, x_{l-2}, x_{l-1}]}.
\]
By substituting $x_i \,\, (i=0, 1, \cdots, l-1)$ to $k_i$, and Theorem \ref{Lem_4FRC_Matrix}, we have the result.
\end{proof}

\begin{Def} \label{SJEDGFSTYT}
{\rm
(1) \, For a finite design 
$
D=\{m\}_n=
  \underbrace{1 \cdots 11}_{\tiny k_0}
  \underbrace{0 \cdots 00}_{\tiny k_1}
  \cdots
  \underbrace{0 \cdots 00}_{\tiny k_{l-2}}
  \underbrace{1 \cdots 11}_{\tiny k_{l-1}}
$
and a finite or infinite design 
$
D'=
  \underbrace{1 \cdots 11}_{\tiny s_0}
  \underbrace{0 \cdots 00}_{\tiny s_1}
  \cdots
  \underbrace{1 \cdots 11}_{\tiny s_{t-1}}
  \cdots \cdots,
$
we define the {\color{black}{\it composition}} of $D$ and $D'$ by
\[
D \cdot D'=DD'=
  \underbrace{1 \cdots 11}_{\tiny k_0}
  \underbrace{0 \cdots 00}_{\tiny k_1}
  \cdots
  \underbrace{0 \cdots 00}_{\tiny k_{l-2}}
  \underbrace{1 \cdots 11}_{\tiny k_{l-1}}
  \underbrace{1 \cdots 11}_{\tiny s_0}
  \underbrace{0 \cdots 00}_{\tiny s_1}
  \cdots
  \underbrace{1 \cdots 11}_{\tiny s_{t-1}}
  \cdots \cdots.
\]
We note that $\theta_{DD'}=\theta_D+2^{-n}\theta_{D'}$.
If $D'$ is a finite design such that $D'=\{m'\}_{n'}$, then $D \cdot D'=DD'=\{2^{n'}m+m'\}_{n+n'}$. \\
(2) \, For a finite design 
$
D=\{m\}_n=
  \underbrace{1 \cdots 11}_{\tiny k_0}
  \underbrace{0 \cdots 00}_{\tiny k_1}
  \cdots
  \underbrace{0 \cdots 00}_{\tiny k_{l-2}}
  \underbrace{1 \cdots 11}_{\tiny k_{l-1}}
$
and a terminal design 
$
D'=\{2^{n'}\}_{n'}=
  1
  \underbrace{0 \cdots 00}_{\tiny n'}{}_{\tau},
$
we define the {\color{black}{\it composition}} of $D$ and $D'$ by
\[
\begin{split}
D \cdot D'=\{2^{n'}m+2^{n'}\}_{n+n'}&=(D+1)\underbrace{0 \cdots 00}_{\tiny n'} \\
&= 
\left\{
\begin{split}
&   \underbrace{1 \cdots 11}_{\tiny k_0}
     \underbrace{0 \cdots 00}_{\tiny k_1}
     \cdots
     \underbrace{0 \cdots 0}_{\tiny k_{l-2}-1}
     1
     \underbrace{0 \cdots 00}_{\tiny k_{l-1}}
     \underbrace{0 \cdots 00}_{\tiny n'} \,\,\,\, (m \neq 2^n-1), \\
&   1\underbrace{0 \cdots 00}_{n}\underbrace{0 \cdots 00}_{n'}{}_{\tau}=\{2^{n+n'}\}_{n+n'} \,\,\,\, (m=2^n-1).
\end{split}
\right. 
\end{split}
\]
}
\end{Def}
We note that for two finite designs $D$ and $D'$, we have $D(D'+1)=DD'+1$.
The following is the first main theorem of this paper.

\begin{Thm}(Design Composition Theorem I) \\
\label{Thm_The Deviding Formula of Ichio Function}
Let $D$ be a finite design and $D'$ an arbitrary design.
Then we have 
\[
\mathcal{A}(\theta_{DD'})=U(D) \mathcal{A}(\theta_{D'}).
\]
\end{Thm}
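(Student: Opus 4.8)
The plan is to reduce to the case where $D'$ is a reduced finite design, so that $\theta_{D'}$ is a dyadic rational and the continued-fraction machinery of Sections 4 and 6 applies verbatim, and then to recover the general statement by continuity. The key structural input is the relation $\theta_{DD'}=\theta_D+2^{-n}\theta_{D'}$ recorded in Definition~\ref{SJEDGFSTYT}, which makes $\theta_{DD'}$ a continuous (indeed affine) function of $\theta_{D'}$.

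First I would observe that both sides of the asserted identity are continuous functions of $\theta_{D'}\in[0,1]$. The left-hand side is continuous because $\theta_{DD'}=\theta_D+2^{-n}\theta_{D'}$ and $\mathcal{A}$ is continuous (Theorem~\ref{KIJLMMKU}); the right-hand side is continuous because $\mathcal{A}$ is continuous and the linear fractional transformation $x\mapsto U(D)x=(ax+b)/(cx+d)$ is continuous on the compactified ray $[0,\infty]$. Here no blow-up occurs for finite $x\ge 0$: the entries are non-negative with $ad-bc=1$, so $d=0$ would force $-bc=1$, which is impossible, whence $cx+d\ge d\ge 1>0$; and the convention $\mathcal{A}(1)=\infty$ is matched by $U(D)\infty=a/c$. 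Since the reduced finite designs realize exactly the dyadic rationals in $(0,1)$, which together with the endpoints are dense in $[0,1]$, it suffices to prove the identity when $D'$ is a reduced finite design. For an infinite $D'$ I would then approximate it by its finite truncations $D'_j$ (its first $j$ segments), for which $\theta_{D'_j}\to\theta_{D'}$ and hence $\theta_{DD'_j}\to\theta_{DD'}$, and pass to the limit on both sides.

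For $D'$ a reduced finite design, write $D=\underbrace{1\cdots1}_{k_0}\cdots\underbrace{1\cdots1}_{k_{l-1}}$ with $l$ odd and $k_{l-1}\ge 0$, and $D'=\underbrace{1\cdots1}_{s_0}\cdots\underbrace{1\cdots1}_{s_{t-1}}$ with $t$ odd and $s_{t-1}\ge 1$. By Lemma~\ref{SKWIDDGJGE}, $\mathcal{A}(\theta_{D'})=CF(s_0,s_1,\ldots,s_{t-1})$, and by Lemma~\ref{DDUGEADXC} applied to $D$ we have $U(D)x=CF(k_0,\ldots,k_{l-2},k_{l-1}+x)$. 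Substituting $x=\mathcal{A}(\theta_{D'})$ and using the recursive definition of the continued fraction (Definition~\ref{Def_Continued Fraction}) in the form $k_{l-1}+CF(s_0,s_1,\ldots,s_{t-1})=CF(k_{l-1}+s_0,s_1,\ldots,s_{t-1})$, I obtain $U(D)\mathcal{A}(\theta_{D'})=CF(k_0,\ldots,k_{l-2},k_{l-1}+s_0,s_1,\ldots,s_{t-1})$. On the other side $DD'$ is again reduced, since it ends in $s_{t-1}\ge 1$ ones, so Lemma~\ref{SKWIDDGJGE} computes $\mathcal{A}(\theta_{DD'})$ as the continued fraction of the run-length sequence of $DD'$, and that sequence is precisely $(k_0,\ldots,k_{l-2},k_{l-1}+s_0,s_1,\ldots,s_{t-1})$ once the runs meeting at the junction of $D$ and $D'$ are combined. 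Hence the two expressions coincide and the finite reduced case follows.

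The step I expect to be the main obstacle is exactly the bookkeeping at the junction of $D$ and $D'$. Depending on whether the trailing run of $D$ and the leading run of $D'$ carry the same symbol, and on the degenerate subcases $k_{l-1}=0$ or $s_0=0$, the concatenation either fuses two equal-symbol runs or leaves them separate, so the ``run-length sequence'' of $DD'$ is not literally $(k_0,\ldots,k_{l-1}+s_0,\ldots)$ in every case. I must therefore verify that the continued-fraction composition identity above absorbs all of these cases uniformly into the single sequence, invoking — whenever the entry $k_{l-1}+s_0$ (or an induced entry) vanishes — the reduction rules for $CF$ together with the conventions $1/0=\infty$ and $1/\infty=0$ of Definition~\ref{Def_Continued Fraction}. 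Once this is settled, the endpoint cases are handled without extra work: if $D'=\varepsilon$ then $DD'=D$ and $U(D)\cdot 0=[2^n:m]/[2^n:2^n-m]=\mathcal{A}(\theta_D)$ by Definition~\ref{Def_Ichio Function}, while the terminal case $\theta_{D'}=1$ with $U(D)\infty$ is subsumed by the continuity argument of the second paragraph.
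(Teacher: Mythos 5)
Your proposal is correct and follows essentially the same route as the paper's own proof: the finite case via Lemma \ref{SKWIDDGJGE} and Lemma \ref{DDUGEADXC} together with the run-length concatenation identity for $CF$, and the infinite and terminal cases by continuity of $\mathcal{A}$ (Theorem \ref{KIJLMMKU}). If anything you are more careful than the paper, which silently writes the run-length sequence of $DD'$ as $(k_0,\ldots,k_{l-2},k_{l-1}+s_0,s_1,\ldots,s_{t-1})$ without discussing the degenerate junction cases $k_{l-1}=0$ or $s_0=0$; these are indeed absorbed by the $CF$ reduction rules and the conventions $1/0=\infty$, $1/\infty=0$, exactly as you indicate.
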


\begin{proof}
Firstly, we suppose $D'$ is a finite design.
Let
$
D=
  \underbrace{1 \cdots 11}_{\tiny k_0}
  \underbrace{0 \cdots 00}_{\tiny k_1}
  \cdots
  \underbrace{0 \cdots 00}_{\tiny k_{l-2}}
  \underbrace{1 \cdots 11}_{\tiny k_{l-1}}
$
and
$
D'=
  \underbrace{1 \cdots 11}_{\tiny s_0}
  \underbrace{0 \cdots 00}_{\tiny s_1}
  \cdots
  \underbrace{0 \cdots 00}_{\tiny s_{t-2}}
  \underbrace{1 \cdots 11}_{\tiny s_{t-1}}
$
be finite design representations. 
Then we have
\[
DD'=
  \underbrace{1 \cdots 11}_{\tiny k_0}
  \underbrace{0 \cdots 00}_{\tiny k_1}
  \cdots
  \underbrace{1 \cdots 11}_{\tiny k_{l-1}+s_0}
  \cdots
  \underbrace{0 \cdots 00}_{\tiny s_{t-2}}
  \underbrace{1 \cdots 11}_{\tiny s_{t-1}}.
\]
Since
\[
\mathcal{A}(\theta_{D'})=CF(s_0, s_1, \ldots, s_{t-2}, s_{t-1}) 
\]
and
\[
\mathcal{A}(\theta_{DD'})=CF(k_0, k_1, \ldots, k_{l-2}, k_{l-1}+s_0, s_1, \ldots, s_{t-2}, s_{t-1}), 
\]
we have
\[
\mathcal{A}(\theta_{DD'})=CF(k_0, k_1, \ldots, k_{l-2}, k_{l-1}+\mathcal{A}(\theta_{D'}))=U(D)\mathcal{A}(\theta_{D'}) 
\]
by Lemma \ref{DDUGEADXC}. 

For the case that $D'$ is an infinite design, by the continuity of the assembly function (cf. Theorem \ref{KIJLMMKU}), we have the result. 

Also for the case that $D'$ is a terminal design, we can confirm the same result by easy calculation.
\end{proof}

\begin{Exm} \label{LKHBYT}
{\rm We calculate the values of the assembly function.
\[
\begin{split}
(1) \,\, X&=\mathcal{A} (0.\dot{1}\dot{0}10 \cdots)
=U(10)\mathcal{A} (0.\dot{1}\dot{0}10 \cdots)
=\left(
\begin{matrix}
[11]  & [10] \\
[01]  & [10]
\end{matrix}
\right)\!X
=\left(
\begin{matrix}
2  &  1 \\
1  &  1
\end{matrix}
\right)\!X \quad \quad \quad \quad  \\[3pt]
&=\cfrac{2X+1}{X+1}\,.\,\,\,\,\text{Hence,}\,\,\,X=\mathcal{A} (0.\dot{1}\dot{0}10 \cdots)=\cfrac{1+\sqrt{5}}{2}\,. 
\end{split}
\]
\[
\begin{split}
(2) \,\, X&=\mathcal{A} (0.\dot{1}0\dot{1}101 \cdots)
=U(101)\mathcal{A} (0.\dot{1}0\dot{1}101 \cdots)
=\left(
\begin{matrix}
[110]  & [101] \\
[010]  & [011]
\end{matrix}
\right)\!X
=\left(
\begin{matrix}
2  &  3 \\
1  &  2
\end{matrix}
\right)\!X \\[3pt]
&=\cfrac{2X+3}{X+2}\,.\,\,\,\,\text{Hence,}\,\,\,X=\mathcal{A} (0.\dot{1}0\dot{1}101 \cdots)=\sqrt{3}\,. 
\end{split}
\]}
\end{Exm}

The following is the second main theorem of this paper.

\begin{Thm}(Design Composition Theorem II) \\
\label{Thm_The Deviding Formula of Matrix}
For two finite designs  $D$ and $D'$,
\[
U(DD')=U(D)U(D').
\]
\end{Thm}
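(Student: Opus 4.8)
The plan is to prove the multiplicativity $U(DD')=U(D)U(D')$ by reducing the matrix identity to a statement about the assembly function, which has already been established in Design Composition Theorem I (Theorem~\ref{Thm_The Deviding Formula of Ichio Function}). The key observation is that Lemma~\ref{DDUGEADXC} identifies the SDM $U(2^n:m)$ with the linear fractional transformation $x \mapsto \mathcal{A}(\theta_D x)$ realized through continued fractions, so the action of an SDM on a real variable encodes all four entries of the matrix.

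First I would recall that by Lemma~\ref{DDUGEADXC}, for a finite design $D$ with $D=\{m\}_n$ we have
\[
U(D)x=\frac{[2^n:m+1]x+[2^n:m]}{[2^n:2^n-(m+1)]x+[2^n:2^n-m]},
\]
so that $U(D)$ is determined (up to the entries, not merely up to projective class) as a Stern's diatomic matrix. Next I would note that the composition $\theta_{DD'}=\theta_D+2^{-n}\theta_{D'}$ from Definition~\ref{SJEDGFSTYT}, together with Theorem~\ref{Thm_The Deviding Formula of Ichio Function}, gives for any design $D''$ the relation
\[
\mathcal{A}(\theta_{(DD')D''})=U(DD')\,\mathcal{A}(\theta_{D''}),
\]
while applying Theorem~\ref{Thm_The Deviding Formula of Ichio Function} twice, using the evident associativity $(DD')D''=D(D'D'')$ of word concatenation, yields
\[
\mathcal{A}(\theta_{D(D'D'')})=U(D)\,\mathcal{A}(\theta_{D'D''})=U(D)\,U(D')\,\mathcal{A}(\theta_{D''}).
\]
Hence $U(DD')$ and $U(D)U(D')$ act identically as linear fractional transformations on $\mathcal{A}(\theta_{D''})$ for every finite design $D''$.

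The main obstacle is that equality of linear fractional transformations only forces equality of matrices up to a scalar, so I must upgrade the projective agreement to an honest matrix identity. Both $U(DD')$ and $U(D)U(D')$ are unimodular (determinant $1$ by Theorem~\ref{Thm_det=1}, and the product of unimodular matrices is unimodular) with non-negative integer entries, and the transformation value determines the matrix within its $SL_2$ class; since the scalar must itself have determinant-compatible value $\pm1$ and the non-negativity of entries rules out the sign $-1$, the scalar is forced to be $1$. Concretely, I would take two distinct test points—for instance $\mathcal{A}(\theta_{D''})=0$ and $\mathcal{A}(\theta_{D''})=\infty$ via $D''=\{0\}_1$ and the terminal design, together with one finite value—to read off that the four entries of $U(DD')$ and $U(D)U(D')$ coincide exactly. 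An alternative, and perhaps cleaner, route is to bypass the projective issue entirely by invoking the uniqueness in the Matrix Representation Theorem (Theorem~\ref{Thm_Modular Matrix_Depth Order}): the design of the product matrix $U(D)U(D')$ is a unimodular non-negative matrix, hence equals $U(\{m''\}_{n''})$ for a unique $\{m''\}_{n''}$, and a direct check that its continued-fraction action matches that of $U(DD')$ identifies $\{m''\}_{n''}=DD'$, giving the result.
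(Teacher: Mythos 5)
Your proof is correct, and it buys the theorem by a genuinely different mechanism than the paper, even though both arguments rest on Design Composition Theorem I. The paper never passes through projective agreement of linear fractional transformations: it applies Theorem \ref{Thm_The Deviding Formula of Ichio Function} only to the two specific designs $D'+1$ and $D'$, using the identity $D(D'+1)=DD'+1$ noted after Definition \ref{SJEDGFSTYT}, and it reads the two columns of $U(D')$ as elements of $\varDelta$, the set of coprime pairs of non-negative integers (coprimality is Corollary \ref{CorCoprime}). Since the action of $\varGamma$ on $\varDelta$ introduced in Definition \ref{Def_Design of Matrix} is honest matrix--vector multiplication, and the correspondence between $\varDelta$ and $\mathbb{Q}_{\ge 0}\cup\{\infty\}$ is bijective, the equalities $U(D)\mathcal{A}(\theta_{D'+1})=\mathcal{A}(\theta_{DD'+1})$ and $U(D)\mathcal{A}(\theta_{D'})=\mathcal{A}(\theta_{DD'})$ identify the columns of $U(D)U(D')$ with those of $U(DD')$ exactly, so no scalar ambiguity ever arises; this is precisely why the paper sets up the $\varDelta$-action alongside the action on fractions. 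You instead append an arbitrary third design $D''$, use associativity of word concatenation and the homomorphism property of $\varPhi$ (Definition \ref{Def_Design of Matrix}) to conclude that $U(DD')$ and $U(D)U(D')$ agree as linear fractional transformations on all of $\mathbb{Q}_{\ge 0}$, and then kill the scalar by the determinant computation $\lambda^{2}=1$ combined with non-negativity of the entries --- the standard M\"obius-rigidity argument. Both are sound; the paper's route is more economical (the lift to $\varDelta$ makes the projective issue vanish at the outset), while yours is more conceptual and makes transparent that $D\mapsto U(D)$ is multiplicative because composition of fractional transformations mirrors matrix multiplication. Two small points worth making explicit in your version: agreement at the two points $0$ and $\infty$ alone gives only columnwise proportionality, so you need either your third finite test value (three points give proportionality of the matrices) or the coprimality of the columns to force both proportionality factors to be $1$; and your alternative route via the uniqueness in Theorem \ref{Thm_Modular Matrix_Depth Order} is not genuinely independent, since identifying $U(\{m''\}_{n''})$ with $U(DD')$ from their actions requires the same rigidity step before uniqueness can be invoked.
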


\begin{proof}
We suppose $D'=\{m'\}_{n'}$.
Then the first and second columns of $U(D')$ are
\[
\left(
\begin{matrix}
[2^{n'} : m'+1]   \\
[2^{n'} : 2^{n'}-(m'+1)]  
\end{matrix}
\right) \,\,\,\,\,
\text{and} \,\,\,\,\,
\left(
\begin{matrix}
[2^{n'} : m']   \\
[2^{n'} : 2^{n'}-m']  
\end{matrix}
\right)
\]
respectively. 
They are in $\varDelta$ (cf. Definition \ref{Def_Design of Matrix}) by Corollary \ref{CorCoprime} (2), 
and correspond to $\mathcal{A}(\theta_{D'+1})$ and $\mathcal{A}(\theta_{D'})$ respectively.
By Theorem \ref{Thm_The Deviding Formula of Ichio Function}, we have
\[
U(D)\mathcal{A}(\theta_{D'+1})=\mathcal{A}(\theta_{D(D'+1)})=\mathcal{A}(\theta_{DD'+1}) \,\,\,\,\,
\text{and} \,\,\,\,\,
U(D)\mathcal{A}(\theta_{D'})=\mathcal{A}(\theta_{DD'}),
\]
and they correspond to the first and second columns of $U(DD')$ respectively.
Therefore we have the result.
\end{proof}

\begin{Exm} \label{HNUVOPJ}
{\rm \[
\begin{split}
&U(10)U(101)
=
\left(
\begin{matrix}
[11]  & [10] \\
[01]  & [10]
\end{matrix}
\right)
\left(
\begin{matrix}
[110]  & [101] \\
[010]  & [011]
\end{matrix}
\right)
=
\left(
\begin{matrix}
2  &  1 \\
1  &  1
\end{matrix}
\right)
\left(
\begin{matrix}
2  &  3 \\
1  &  2
\end{matrix}
\right)
=
\left(
\begin{matrix}
5  &  8 \\
3  &  5
\end{matrix}
\right). \\[8pt]
&U(10101)
=
\left(
\begin{matrix}
[10110]  & [10101] \\
[01010]  & [01011] 
\end{matrix}
\right)
=
\left(
\begin{matrix}
5  &  8 \\
3  &  5
\end{matrix}
\right).\,\,\,
\text{Hence,}\,\,
U(10)U(101)=U(10101).
\end{split}
\]}
\end{Exm}

\begin{Def} \label{SHJWQDHRSR}
{\rm
For two positive real numbers $\omega$ and $\eta$, if there exists 
$
A=\left(
\begin{matrix}
a  & b \\
c  & d  
\end{matrix}
\right) \in GL(2 ; \mathbb{Z})
$
such that
\[
\eta=A\omega=\frac{a\omega+b}{c\omega+d},
\]
then $\omega$ and $\eta$ are {\it equivalent}.
More precisely, if $ad-bc=1 \, (A \in SL(2 ; \mathbb{Z}))$, then $\omega$ and $\eta$ are {\it positively equivalent},
and if $ad-bc=-1 \, (A \in GL(2 ; \mathbb{Z}) \setminus SL(2 ; \mathbb{Z}))$, then $\omega$ and $\eta$ are {\it negatively equivalent}.
We note that $GL(2 ; \mathbb{Z})=
SL(2 ; \mathbb{Z}) 
\amalg 
\left(
\begin{matrix}
0  & 1 \\
1  & 0  
\end{matrix}
\right)
SL(2 ; \mathbb{Z})
$. 
}
\end{Def}

\begin{Def} \label{KKPPETAAB}
{\rm
For an infinite design 
$
D=
  \underbrace{1 \cdots 11}_{\tiny k_0}
  \underbrace{0 \cdots 00}_{\tiny k_1}
  \cdots
  \underbrace{0 \cdots 00}_{\tiny k_{l-2}}
  \underbrace{1 \cdots 11}_{\tiny k_{l-1}}
  \cdots
$,
the {\color{black}{\it conjugate design}} of $D$ is 
$
\bar{D}=
  \underbrace{0 \cdots 00}_{\tiny k_0}
  \underbrace{1 \cdots 11}_{\tiny k_1}
  \cdots
  \underbrace{1 \cdots 11}_{\tiny k_{l-2}}
  \underbrace{0 \cdots 00}_{\tiny k_{l-1}}
  \cdots
$.
}
\end{Def}
Then, we have $\mathcal{A}(\theta_{\bar{D}})=1/\mathcal{A}(\theta_{D})$ by Lemma \ref{SKWIDDGJGE} and Lemma \ref{EGHSRTWED}.

\begin{Cor} \label{JNPKGUNMHN}
Let $\omega$ and $\eta$ be two positive irrational numbers, 
and $\omega=\mathcal{A}(\theta)$ and $\eta=\mathcal{A}(\varphi)$.
Then we have the following: \\
$(1)$ \, $\omega$ and $\eta$ are positively equivalent if and only if there exist two finite designs $D_1$ and $D_2$,
and an infinite design $D$ such that $\theta=\theta_{D_1D}$ and $\varphi=\theta_{D_2D}$. \\
$(2)$ \, $\omega$ and $\eta$ are negatively equivalent if and only if there exist two finite designs $D_1$ and $D_2$,
and an infinite design $D$ such that $\theta=\theta_{D_1D}$ and $\varphi=\theta_{D_2\bar{D}}$.
\end{Cor}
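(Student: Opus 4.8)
The plan is to prove each ``if'' direction by direct computation and then treat the two ``only if'' directions together, since they rest on the same continued–fraction mechanism.

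First I would dispatch the ``if'' directions. For part $(1)$, assume $\theta=\theta_{D_1D}$ and $\varphi=\theta_{D_2D}$. Setting $\xi=\mathcal{A}(\theta_D)$ and applying Design Composition Theorem I (Theorem \ref{Thm_The Deviding Formula of Ichio Function}) to each, I get $\omega=U(D_1)\xi$ and $\eta=U(D_2)\xi$. Since $U(D_1)\in\varGamma\subset SL(2;\mathbb{Z})$ is invertible over $\mathbb{Z}$, eliminating $\xi$ yields $\eta=U(D_2)U(D_1)^{-1}\omega$, and $\det\big(U(D_2)U(D_1)^{-1}\big)=1$ by Theorem \ref{Thm_det=1}; hence $\omega$ and $\eta$ are positively equivalent. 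For part $(2)$, assume instead $\varphi=\theta_{D_2\bar D}$. The same computation together with the identity $\mathcal{A}(\theta_{\bar D})=1/\mathcal{A}(\theta_D)$ (the remark following Definition \ref{KKPPETAAB}) gives $\eta=U(D_2)(1/\xi)$. Since $1/\xi=J\xi$ under the linear fractional action of $J=\left(\begin{matrix}0&1\\1&0\end{matrix}\right)$, I obtain $\eta=U(D_2)\,J\,U(D_1)^{-1}\omega$, whose determinant is $-1$, so $\omega$ and $\eta$ are negatively equivalent.

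For the ``only if'' directions I would pass through continued fractions. Since $\omega$ is a positive irrational, its design $D_\theta$ is the unique infinite design of $\theta=\mathcal{A}^{-1}(\omega)$, and by Lemma \ref{SKWIDDGJGE} together with the continuity of $\mathcal{A}$ (Theorem \ref{KIJLMMKU}) the successive run–lengths $k_0,k_1,k_2,\dots$ of $D_\theta$ are exactly the partial quotients of the continued fraction of $\omega$; likewise for $\eta$ and $D_\varphi$. If $\omega$ and $\eta$ are equivalent in $GL(2;\mathbb{Z})$, Serret's theorem (see \cite{Hardy}) says their continued fractions eventually agree, i.e.\ the run–length sequences of $D_\theta$ and $D_\varphi$ share a common tail. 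A matched run carries the same binary digit in the two designs precisely when its positions in $D_\theta$ and in $D_\varphi$ have the same parity, since the digits of a design alternate $1,0,1,0,\dots$ along consecutive runs. Equal parity then produces a genuine common tail $D$, writing $\theta=\theta_{D_1D}$ and $\varphi=\theta_{D_2D}$ with $D_1,D_2$ the finite initial segments; opposite parity produces a common tail only after conjugating one design, i.e.\ $\varphi=\theta_{D_2\bar D}$.

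It remains to match the parity of the matching positions to $\det A$, and this is the step I expect to be the main obstacle. The clean case is that the relating matrix reconstructed from the common tail, namely $U(D_2)U(D_1)^{-1}$ (respectively $U(D_2)\,J\,U(D_1)^{-1}$), has determinant $+1$ exactly in the equal–parity case and $-1$ in the opposite–parity case; since a non–quadratic irrational $\omega$ has stabiliser $\{\pm I\}$ in $GL(2;\mathbb{Z})$, this relating matrix equals $\pm A$, so its determinant is $\det A$ and the correct case is forced. The delicate point is the quadratic–irrational case, where $D_\theta$ is eventually periodic and matchings of both parities can occur; there I would argue that shifting the matching position by the run–period realises the orientation dictated by $\det A$, so that the common–tail form (part $(1)$) is available whenever $\det A=1$ and the conjugate–tail form (part $(2)$) whenever $\det A=-1$. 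Combining this with the ``if'' directions already established closes both equivalences.
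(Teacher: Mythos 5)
Your ``if'' directions are correct, and your non-quadratic ``only if'' argument (Serret's theorem plus the fact that a non-quadratic irrational has stabiliser $\{\pm I\}$, so the matrix $B$ reconstructed from a tail-matching satisfies $B^{-1}A\in\{\pm I\}$ and the matching parity is forced to agree with $\det A$) is sound in outline. The genuine gap is exactly where you flagged it: the quadratic case, and your proposed patch does not close it. Shifting a matching position by the period flips the parity of the matching only when the period consists of an \emph{odd} number of runs; when the minimal period is even --- e.g.\ $\sqrt{3}=CF(1,1,2,1,2,\dots)$ with minimal period $(1,2)$ --- every period-shift preserves parity, so ``shifting the matching position by the run-period'' produces nothing new. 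To make your route work you would need the classical refinement: a quadratic irrational admits a determinant $-1$ stabiliser in $GL(2;\mathbb{Z})$ if and only if its minimal period length is odd (the negative-Pell phenomenon), so that whenever $\det A$ disagrees with the parity of some matching, a determinant $-1$ stabiliser exists, the period is odd, and a shift then realises the other parity. This automorph-theoretic fact is neither proved nor precisely cited in your proposal, and without it both ``only if'' directions remain open for quadratic $\omega$; note that quadratic inputs are not a marginal case here, since they are precisely the values at periodic designs (Theorem \ref{Thm Ichio Function}).

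For comparison, the paper avoids Serret's theorem and all parity bookkeeping with one uniform trick: after normalising signs so that $a\omega+b>0$ and $c\omega+d>0$, it multiplies $X$ on the right by $U(D_n)$ for a deep segment $D_n$ of the design of $\theta$; by continuity of $\mathcal{A}$ (Theorem \ref{KIJLMMKU}) the product $XU(D_n)$ lies in $\varGamma$ for $n$ large, and since $\varGamma$ is the free monoid on $U(\{1\}_1)$ and $U(\{0\}_1)$, Design Composition Theorem I converts each generator into prepending a single digit to the infinite tail $E_n$, which yields the common tail directly, quadratic or not; the negative case then reduces to the single coset representative $\left(\begin{smallmatrix}0&1\\1&0\end{smallmatrix}\right)$, handled by the conjugate design $\bar{D}$. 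If you wish to keep your continued-fraction route, either import the period-parity theorem with a precise reference, or replace your quadratic-case patch by this absorption argument.
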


\begin{proof}
Let $D$ and $D'$ be two infinite designs such that $\theta=\theta_D$ and $\varphi=\theta_{D'}$ respectively. \\
(1) \, By the assumption, there exists 
$
X=\left(
\begin{matrix}
a  & b \\
c  & d  
\end{matrix}
\right) \in SL(2 ; \mathbb{Z})
$
such that 
\[
\eta=X\omega=\frac{a\omega+b}{c\omega+d}.
\]
If necessary, by multiplying
$
\left(
\begin{matrix}
-1  & 0 \\
0  & -1  
\end{matrix}
\right),
$
we may assume that $a\omega+b>0$ and $c\omega+d>0$. \\
Let $D_n$ be the $n$-th segment of $D$ (cf. Definition \ref{AELMUQSST}), and $E_n$ an infinite design such that $D=D_nE_n$.
Since $\lim_{n \to \infty}\theta_{D_n}=\lim_{n \to \infty}\theta_{D_n+1}=\theta$ and Theorem \ref{KIJLMMKU} (continuity of the assembly function),
there exists a sufficiently large integer $N$ such that for $n \ge N$, we have
\[
\left(
\begin{matrix}
a  & b \\
c  & d  
\end{matrix}
\right)
\mathcal{A}(\theta_{D_n+1}) > 0 \,\,\,\,\, \text{and} \,\,\,\,\, 
\left(
\begin{matrix}
a  & b \\
c  & d  
\end{matrix}
\right)
\mathcal{A}(\theta_{D_n}) > 0.
\]
Hence $XU(D_n) \in \varGamma$ for $n \ge N$, and then we have
\[
X\mathcal{A}(\theta)=(XU(D_n))\mathcal{A}(\theta_{E_n})
\]
by Theorem \ref{Thm_The Deviding Formula of Ichio Function}.
By replacing $X$ and $D$ with $XU(D_n)$ and $E_n$ respectively,
we may assume $X \in \varGamma$. 
Since $\varGamma$ is generated by 
$
\left(
\begin{matrix}
1  & 1 \\
0  & 1  
\end{matrix}
\right)
$
and
$
\left(
\begin{matrix}
1  & 0 \\
1  & 1  
\end{matrix}
\right)
$
as a monoid, we just look at effects by the generators. 
Suppose
$
\eta=
\left(
\begin{matrix}
1  & 1 \\
0  & 1  
\end{matrix}
\right)\omega.
$
Since
$
\left(
\begin{matrix}
1  & 1 \\
0  & 1  
\end{matrix}
\right)
=U(2^1 : 1)
$,
we have $D'=1D$ by Theorem \ref{Thm_The Deviding Formula of Ichio Function}. 
Suppose
$
\eta=
\left(
\begin{matrix}
1  & 0 \\
1  & 1  
\end{matrix}
\right)\omega.
$
Since
$
\left(
\begin{matrix}
1  & 0 \\
1  & 1  
\end{matrix}
\right)
=U(2^1 : 0)
$,
we have $D'=0D$ by Theorem \ref{Thm_The Deviding Formula of Ichio Function}. 
Therefore we have the result. \\
(2) \, By (1), we just look at effects by
$
\left(
\begin{matrix}
0  & 1 \\
1  & 0  
\end{matrix}
\right)
$.
Suppose that
$
\eta=
\left(
\begin{matrix}
0  & 1 \\
1  & 0  
\end{matrix}
\right) \omega=\cfrac{1}{\omega}\,.
$
Definition \ref{KKPPETAAB} implies $D'=\bar{D}$. 
Therefore we have the result.
\end{proof}

\begin{Rem} \label{DWUORQHRD}
{\rm
(1) The proof of Corollary \ref{JNPKGUNMHN} shows us that for any matrix $M \in SL(2 ; \mathbb{Z})$, there exist two matrices $A, B \in \varGamma$ such that $M=AB^{-1}$ or $M=-AB^{-1}$. \\
(2) For two positive rational numbers $a/b=\mathcal{A}(\theta_{D_1})$ and $p/q=\mathcal{A}(\theta_{D_2})$, 
where $D_1$ and $D_2$ are reduced designs, we have
$
D_1=D_1\varepsilon, \,\, D_2=D_2\varepsilon \,\, \text{and} \,\, D_2=(D_2-1)\bar{\varepsilon}.
$
Hence $a/b$ and $p/q$ are positively and negatively equivalent.\\
(3) Let $?(x)$ be the Minkowski's question mark function (\cite{Conley}).
Then it is not so hard to see that
$$\frac{1}{\mathcal{A}(\theta)}=\frac{1}{?^{-1}(\theta)}-1.$$
}
\end{Rem}

\section{Periodic designs and quadratic numbers}

In this section, we define a {\it periodic design}, and
study relationship between periodic designs and quadratic irrational numbers 
by using the assembly function.

\begin{Def}
\label{Def_Design of Real Number}
{\rm Since  $\mathcal{A} :$ [0,1] $\rightarrow$  [0,$\infty$] is a bijective function,
for any non-negative real number $\omega$, there exists a real number $\theta \in [0, 1]$ such that $\omega=\mathcal{A}(\theta)$,
and a design $D$ such that $\theta=\theta_D$.
We note that If $\theta$ is of the form $m/2^n$, a reduced design $D$ is uniquely determined,
and if $\theta$ is not of the form $m/2^n$, an infinite design $D$ is uniquely determined.
Then we call $D$ the {\it design} of $\omega$.
}
\end{Def}

\begin{Def} \label{VXGMQEWAG}
{\rm
For an infinite design $D$, if there exist two finite designs $D'$ and $P \neq \varepsilon$ such that $D=D'PP \cdots$,
then we call $D$ a {\it periodic design} and $P$ a {\it period} of $D$.
For a periodic design $D$, we note the following: \\
(1) \, $D'$ and $P$ are not uniquely determined. \\
(2) \, $P$ can be replaced with a cyclic permutated one. \\
(3) \, $P$ can be replaced with a multiple of $P$. \\
(4) \, If there exists $P'$ such that $P$ is a multiple of $P'$, then $P'$ is also a period. \\
(5) \, For a period $P$, there exist two positive integers $m$ and $n$ such that $P=\{m\}_n$, $n \ge 2$ and $1 \le m \le 2^n-2$.

For a periodic design, a period with the minimal length is a {\it minimal period}.
If we can take $D'=\varepsilon$, then $D$ is called a {\it purely periodic design}.
We note that a purely periodic design can be denoted by $D=PPP \cdots$, where $P$ is the uniquely determined minimal period.
}
\end{Def}

\begin{Lem} \label{XTRGJEDN}
Let $D$ be a design and $\theta_D$ the design decimal of $D$.
Then we have the following: \\
{\rm (1)} \, $D$ is a finite design if and only if $\theta_D=m/2^n$, where $m$ and $n$ are integers such that $n \ge 0$ and $0 \le m \le 2^n-1$. \\
{\rm (2)} \, $D$ is a periodic design if and only if $\theta_D=m/\{2^k(2^n-1)\}$,
where $m$, $n$ and $k$ are integers such that $n \ge 2$, $0<m<2^k(2^n-1)$ and $k \ge 0$,
which implies $\theta_D$ is a rational number other than $m/2^n$. \\
{\rm (3)} \, $D$ is a purely periodic design if and only if $\theta_D=m/(2^n-1)$, where $m$ and $n$ are integers such that $n \ge 2$ and $0<m<2^n-1$.
Then the period is $\{m\}_n$.
\end{Lem}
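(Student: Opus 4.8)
The plan is to reduce every claim to a statement about binary expansions. By Definition~\ref{AELMUQSST}, $\theta_D$ is exactly the binary number read off from the word $D$, and the composition identity $\theta_{DD'}=\theta_D+2^{-n}\theta_{D'}$ (for $D=\{m\}_n$, Definition~\ref{SJEDGFSTYT}) lets me split a design at any segment boundary. The structural fact I use throughout is that an infinite design consists of infinitely many finite alternating blocks, hence contains infinitely many $0$'s and infinitely many $1$'s, so $\theta_D$ is never dyadic when $D$ is infinite. Combined with the uniqueness recorded in Definition~\ref{AELMUQSST} (a dyadic $\theta<1$ has a unique finite design, a non-dyadic $\theta$ a unique infinite design), each ``iff'' reduces to computing one decimal and appealing to uniqueness for the converse. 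Part (1) is then immediate: if $D=\{m\}_n$ is finite then $\theta_D=m/2^n$ with $0\le m\le 2^n-1$ by definition; conversely a dyadic value $\theta_D=m/2^n<1$ cannot arise from an infinite design (whose decimal is non-dyadic) nor from a terminal design (for which $\theta_D=1$), so $D$ must be finite.

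I would treat (3) next, exploiting self-similarity. If $D=PPP\cdots$ has minimal period $P=\{m\}_n$, then Definition~\ref{VXGMQEWAG}(5) gives $n\ge2$ and $1\le m\le 2^n-2$, and the identity $D=P\cdot D$ yields $\theta_D=m/2^n+2^{-n}\theta_D$, i.e. $\theta_D=m/(2^n-1)$ with $0<m<2^n-1$. Conversely, given $\theta=m/(2^n-1)$ with $n\ge2$ and $0<m<2^n-1$, the word $P=\{m\}_n$ contains a $1$ (since $m>0$) and a $0$ (since $m<2^n-1$), so $PPP\cdots$ is an admissible infinite design whose decimal equals $\theta$ by the forward computation; as $\theta$ is non-dyadic (its reduced denominator divides $2^n-1$, an odd number $>1$), uniqueness identifies this purely periodic design as the design of $\theta$, with period $\{m\}_n$.

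Part (2) then follows by peeling off a finite pre-period. If $D=D'PPP\cdots$ is periodic with $D'=\{m'\}_k$ and period $P=\{p\}_n$ (so $0<p<2^n-1$ as in (3)), then $\theta_D=m'/2^k+2^{-k}p/(2^n-1)=\big(m'(2^n-1)+p\big)/\{2^k(2^n-1)\}$, and $m:=m'(2^n-1)+p$ satisfies $0<m<2^k(2^n-1)$ and $2^n-1\nmid m$, so $\theta_D$ is a non-dyadic rational of the stated shape. Conversely, from $\theta=m/\{2^k(2^n-1)\}$ I divide $m=a(2^n-1)+b$ with $0\le b<2^n-1$, obtaining $\theta=a/2^k+2^{-k}b/(2^n-1)$ with $0\le a<2^k$; non-dyadicity forces $b\neq0$, so $\{b\}_n$ is a genuine period and $D=\{a\}_k\{b\}_n\{b\}_n\cdots$ is periodic with decimal $\theta$, hence the design of $\theta$ by uniqueness.

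The main obstacle I anticipate lies entirely in the converse directions and is conceptual rather than computational: I must guarantee that each repeating word I build is an admissible infinite design (infinitely many blocks, every $1$- and $0$-run nonempty) and that it is the unique design attached to $\theta$. Both rest on the single point that a period of a genuine infinite design must contain both digits, which is precisely why the bounds $0<m<2^n-1$ and $0<b<2^n-1$ are forced and why dyadic values are excluded. I would also flag that the hypothesis $0<m<2^k(2^n-1)$ in (2) does not by itself force $\theta$ to be non-dyadic (take $m$ a multiple of $2^n-1$); the parenthetical ``$\theta_D$ is a rational number other than $m/2^n$'' must be read as the non-dyadicity condition $2^n-1\nmid m$, which is exactly what the converse of (2) requires.
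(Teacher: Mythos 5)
Your proof is correct, and at bottom it runs on the same computation as the paper's: the geometric-series identity $\theta_{PP\cdots}=m/(2^n-1)$, the splitting $\theta_{D'PP\cdots}=m'/2^k+2^{-k}\,m''/(2^n-1)$, and the division $m=(2^n-1)m'+m''$. The organization differs in two ways worth recording. First, you prove (3) before (2), obtaining $\theta_D=m/(2^n-1)$ from the fixed-point relation $\theta_D=\theta_P+2^{-n}\theta_D$ (via the composition identity of Definition \ref{SJEDGFSTYT}) and then handling (2) by prefixing $\{a\}_k$; the paper instead computes $(2^n-1)\theta_D=\{(2^n-1)m'+m''\}/2^k$ directly for (2) and gets (3) as the case $k=0$. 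Relatedly, in the converse directions you invoke the uniqueness of the (finite or infinite) design attached to $\theta$ from Definition \ref{AELMUQSST}, where the paper writes out the repeating binary expansion explicitly; these are interchangeable. Second, your final paragraph catches a genuine imprecision that the paper's proof glosses over: in the converse of (2) the paper asserts one can ``uniquely determine'' $m'\,(0\le m'\le 2^k-1)$ and $m''\,(0<m''<2^n-1)$ with $m=(2^n-1)m'+m''$, but this fails exactly when $(2^n-1)\mid m$, in which case $\theta=m/\{2^k(2^n-1)\}$ is dyadic and $D$ is finite, not periodic; your reading of the parenthetical as the hypothesis $(2^n-1)\nmid m$ is the correct repair. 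One thing you omit that the paper includes inside this proof: the Fermat--Euler step ($2^{\varphi(q)}\equiv 1 \pmod q$ for odd $q$) showing that \emph{every} rational other than $m/2^n$ is of the form $m/\{2^k(2^n-1)\}$. That converse inclusion is not needed for the literal ``if and only if'' of the lemma, but it upgrades the parenthetical to an equivalence and is what Theorem \ref{Thm Ichio Function} (2) later relies on, so if your proof were to replace the paper's you should add that half-line.
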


\begin{proof}
(1) \, It is clear (cf. Definition \ref{AELMUQSST}). \\
(2) \, Let $D$ be a periodic design.
Then there exist two finite designs $D'$ and $P \neq \varepsilon$ such that $D=D'PP \cdots$.
Suppose that $D'=\{m'\}_k$ and $P=\{m''\}_n$, where $0 \le m' \le 2^k-1, 0 < m'' < 2^n-1$ and $n \ge 2$.
Then we have
\[
2^n\theta_D-\theta_D=(2^n-1)\theta_D=\frac{(2^n-1)m'+m''}{2^k}.
\] 
By setting $m=(2^n-1)m'+m''$, 
we have $0 < m < 2^k(2^n-1)$ and $\theta_D=m/\{2^k(2^n-1)\}$. 

Conversely, suppose $\theta_D=m/\{2^k(2^n-1)\}$, where $m$, $n$ and $k$ are integers such that $n \ge 2$, $0<m<2^k(2^n-1)$ and $k \ge 0$.
We can uniquely determine two integers $m' \, (0 \le m' \le 2^k-1)$ and $m'' \, (0 < m'' < 2^n-1)$ such that $m=(2^n-1)m'+m''$.
If we denote
$
D'=\{m'\}_k=
  \underbrace{1 \cdots 1}_{\tiny s_0}
  \underbrace{0 \cdots 0}_{\tiny s_1}
  \cdots
  \underbrace{1 \cdots 1}_{\tiny s_{t-1}}
$
and
$
P=\{m''\}_n=
  \underbrace{1 \cdots 1}_{\tiny k_0}
  \underbrace{0 \cdots 0}_{\tiny k_1}
  \cdots
  \underbrace{1 \cdots 1}_{\tiny k_{l-1}},
$
then
\[
\begin{split}
\theta_D&=\frac{m}{2^k(2^n-1)}=\frac{(2^n-1)m'+m''}{2^k(2^n-1)}=\frac{m'}{2^k}+\frac{1}{2^k}\frac{m''}{2^n-1}=\frac{m'}{2^k}+\frac{1}{2^k}\frac{m''/2^n}{1-1/2^n} \\[8pt]
&=0.
  \underbrace{1 \cdots 1}_{\tiny s_0}
  \underbrace{0 \cdots 0}_{\tiny s_1}
  \cdots
  \underbrace{1 \cdots 1}_{\tiny s_{t-1}}
  \underbrace{1 \cdots 1}_{\tiny k_0}
  \underbrace{0 \cdots 0}_{\tiny k_1}
  \cdots
  \underbrace{1 \cdots 1}_{\tiny k_{l-1}}
  \underbrace{1 \cdots 1}_{\tiny k_0}
  \underbrace{0 \cdots 0}_{\tiny k_1}
  \cdots
  \underbrace{1 \cdots 1}_{\tiny k_{l-1}}
  \cdots \cdots,
\end{split}
\]
which implies $D$ is a periodic design. 

For any rational number $\theta \, (0 \le \theta \le 1)$ other than $m/2^n$, we can denote it by $\theta=p/(2^kq)$ with $k \ge 0$, 
where $p$ and $2^kq$ are positive coprime integers and $q \, (\ge 3)$ is odd. 
Since 2 and $q$ are coprime, by the Fermat's little theorem, we have $2^{\varphi(q)} \equiv 1 \, (\text{mod}\,q)$.
Hence there exists a positive integer $a$ such that $2^{\varphi(q)}-1=qa$, 
and $\theta=p/(2^kq)=pa/(2^kqa)=pa/(2^k(2^{\varphi(q)}-1))$. 
Hence a rational number except $m/2^n$ is of the form $m/\{2^k(2^n-1)\}$. \\
(3) \, Under the situation $D'=\varepsilon$ or $k=0$, we have the result in the same way as (2).
\end{proof}

\begin{Thm}
\label{Thm Ichio Function}
Let $\omega=\mathcal{A}(\theta) \,\, (\theta \in [0, 1))$ be the assembly function. \\
{\rm (1)} $\omega$ is a rational number if and only if $\theta=m/2^n \,\, (0 \le m \le 2^n-1)$, namely $\theta$ is a finite design decimal. \\
{\rm (2)} $\omega$ is a quadratic irrational number if and only if $\theta$ is a rational number other than $m/2^n$,
namely $\theta$ is a periodic design decimal. \\ 
\end{Thm}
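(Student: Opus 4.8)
The plan is to exploit the dictionary between the binary design decimal $\theta$ and the ordinary continued fraction of $\omega=\mathcal{A}(\theta)$. By Lemma~\ref{SKWIDDGJGE}, if the design of $\theta$ has run-lengths $k_0,k_1,k_2,\dots$ then $\mathcal{A}(\theta)=CF(k_0,k_1,\dots)$; for an infinite design this passes to the limit by the continuity in Theorem~\ref{KIJLMMKU}, so $\omega=CF(k_0,k_1,k_2,\dots)$ is exactly the simple continued fraction whose partial quotients are the run-lengths of $\theta$ (and this representation is unique precisely when $\omega$ is irrational). Part~(1) then follows quickly: if $\theta=m/2^n$ then $\omega=[2^n:m]/[2^n:2^n-m]\in\mathbb{Q}_{\ge 0}$ by Definition~\ref{Def_Ichio Function}. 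Conversely, if $\omega\in\mathbb{Q}_{\ge 0}$, the case $\omega=0$ forces $\theta=0$, while for $\omega>0$ Corollary~\ref{Thm_RRC_Continued Fraction} produces a dyadic $m/2^n$ with $\mathcal{A}(m/2^n)=\omega$, and the injectivity in Theorem~\ref{KIJLMMKU} forces $\theta=m/2^n$.

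For the ``if'' direction of~(2) I would argue by a fixed-point computation. Suppose $\theta$ is a periodic design decimal; by Lemma~\ref{XTRGJEDN}~(2) its design is $D=D'E$ with $E=PPP\cdots$ purely periodic and $P\neq\varepsilon$ a period, say $P=\{m''\}_{n''}$. Since $E=PE$, Design Composition Theorem~I (Theorem~\ref{Thm_The Deviding Formula of Ichio Function}) gives $\xi:=\mathcal{A}(\theta_E)=U(P)\xi$. Writing $U(P)=\left(\begin{smallmatrix} a&b\\ c&d\end{smallmatrix}\right)$, this reads $c\xi^2+(d-a)\xi-b=0$. By Definition~\ref{VXGMQEWAG}~(5) a period satisfies $0<m''<2^{n''}-1$, so $b=[2^{n''}:m'']\ge 1$ and $c=[2^{n''}:2^{n''}-(m''+1)]\ge 1$; hence the quadratic is genuine, and since $\xi$ is irrational by~(1), $\xi$ is a quadratic irrational. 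Finally $\omega=\mathcal{A}(\theta_D)=U(D')\xi$ is the image of $\xi$ under the integral unimodular matrix $U(D')$, so $\omega$ is again quadratic irrational.

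The converse of~(2) is the substantial direction. Assume $\omega$ is a quadratic irrational. By~(1) $\theta$ is not dyadic, so its design is infinite and $\omega=CF(k_0,k_1,k_2,\dots)$ with the $k_i$ its run-lengths. Lagrange's theorem on continued fractions states that the simple continued fraction of a quadratic irrational is eventually periodic; thus $(k_i)$ is eventually periodic, which means the design of $\theta$ is eventually periodic, i.e.\ a periodic design (after, if necessary, doubling the run-length period so that it covers a whole number of alternating $0$- and $1$-blocks, which is permitted by Definition~\ref{VXGMQEWAG}~(3)). By Lemma~\ref{XTRGJEDN}~(2), $\theta$ is then rational and not of the form $m/2^n$. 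Combining the two directions with~(1) yields both stated equivalences.

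The main obstacle is precisely this converse, since it rests on the eventual periodicity of the continued fraction of a quadratic irrational. If one prefers to stay internal to the paper rather than quoting Lagrange's theorem, the work is to reprove it through the $\varGamma$-action of Definition~\ref{Def_Design of Matrix}: one shows that stripping off a design segment corresponds to a step of the continued-fraction (Euclidean) algorithm on $\omega$, that the resulting orbit meets only finitely many ``reduced'' quadratic irrationals of bounded height, and hence must cycle, forcing the run-length sequence, and so the design, to repeat. The remaining points are routine and already controlled above: the degenerate cases $c=0$ or $d=a$ in the fixed-point equation are excluded by the constraints on a period, and the run-length/continued-fraction correspondence is unambiguous exactly because $\omega$ is irrational, so the tail-one ambiguity $CF(\dots,a,1)=CF(\dots,a+1)$ never intervenes.
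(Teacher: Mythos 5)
Your proposal is correct and follows essentially the same route as the paper: part (1) from the definition together with Corollary~\ref{Thm_RRC_Continued Fraction} and strict monotonicity, the ``if'' half of (2) via the fixed-point identity $\mathcal{A}(\theta_E)=U(P)\mathcal{A}(\theta_E)$ coming from Theorem~\ref{Thm_The Deviding Formula of Ichio Function} (the paper writes the equivalent conjugated form $\omega=U(D')U(P)U(D')^{-1}\omega$), and the converse by quoting Lagrange's theorem on eventually periodic continued fractions and translating through the run-length dictionary of Lemma~\ref{SKWIDDGJGE} and Lemma~\ref{XTRGJEDN}~(2). You are in fact slightly more careful than the paper on two points it leaves implicit --- verifying that the fixed-point quadratic is nondegenerate ($b,c\ge 1$ from the period constraints) and doubling an odd-length quotient period so it matches a whole design period.
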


\begin{proof}
(1) It is clear from the classical theory of continued fractions (cf. \cite{Hardy}) and the definition of the assembly function. \\
(2) Suppose that $\omega$ is a quadratic irrational number.
Since a quadratic irrational number can be expressed by a periodic continued fraction (cf. \cite{Hardy}),
there exists a periodic design $D$ such that $\theta=\theta_D$ by Lemma \ref{SKWIDDGJGE}. 
Then $\theta$ is a rational number other than $m/2^n$ by Lemma \ref{XTRGJEDN} (2). 

Conversely, let $\theta$ be a rational number except $m/2^n$.
By Lemma \ref{XTRGJEDN} (2), there exists a periodic design $D$ sch that $\theta=\theta_D$.
Then there exist two finite designs $D'$ and $P$ such that $D=D'PP\cdots$.
Since $D$ is an infinite design, $\omega=\mathcal{A}(\theta_D)$ is an irrational number.
By Theorem \ref{Thm_The Deviding Formula of Ichio Function}, we have
\[
\omega=\mathcal{A}(\theta_D)=\mathcal{A}(\theta_{D'PP\cdots})=U(D')U(P)U(D')^{-1}\mathcal{A}(\theta_D)=U(D')U(P)U(D')^{-1}\omega.
\]
Hence, $\omega$ is a quadratic irrational number. \\
\end{proof}

\begin{Def} \label{DHHUDDGFGJ}
For $\omega=\mathcal{A}(\theta) \,\, (\theta \in [0, 1))$, if there exists a purely periodic design $D$ such that $\theta=\theta_D$, 
we call $\omega$ a {\color{black}{\it pure quadratic irrational number}}.
If $\omega$ is a quadratic irrational number but not a pure quadratic irrational number,
we call $\omega$ a {\color{black}{\it non-pure quadratic irrational number}}. 
\end{Def}

\begin{Thm}
\label{SGFTJAKRE}
Let $\omega=\mathcal{A}(\theta) \,\, (\theta \in [0, 1))$ be the assembly function. \\
{\rm (1)} $\omega$ is a pure quadratic irrational number if and only if $\theta=m/(2^n-1) \, (1 \le m \le 2^n-2)$. \\ 
{\rm (2)} Suppose $\theta=p/q \, (0 \le \theta \le 1)$, where $p$ and $q$ are positive coprime integers and $\theta \neq m/2^n \, (0 \le m \le 2^n)$.
Then, we have the following: \\
{\rm (i)}  $\omega$ is a pure quadratic irrational number if and only if $q$ is odd. \\ 
{\rm (ii)} $\omega$ is a non-pure quadratic irrational number if and only if $q$ is even. \\ 
{\rm (3)} $\omega$ is a purely periodic continued fraction if and only if $\theta=m/(2^n-1)$, where $m$ is an even integer such that $2^{n-1}\le m \le2^n-2$.
\end{Thm}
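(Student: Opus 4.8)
The plan is to read off (1) directly from the definitions, to derive (2) from (1) by elementary number theory, and to do the genuine work in (3), whose crux is to separate pure periodicity of the design \emph{string} from pure periodicity of its \emph{run-length} sequence. For (1) I would combine Definition \ref{DHHUDDGFGJ} with Lemma \ref{XTRGJEDN}(3): by definition $\omega$ is a pure quadratic irrational number exactly when the design of $\theta$ is purely periodic, and Lemma \ref{XTRGJEDN}(3) says this holds precisely when $\theta=m/(2^n-1)$ with $n\ge2$ and $0<m<2^n-1$, i.e. $1\le m\le 2^n-2$.

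For (2), write $\theta=p/q$ in lowest terms; since $\theta\neq m/2^n$ the denominator $q$ is not a power of $2$, so ``$q$ odd'' and ``$q$ even'' are complementary. If $q$ is odd, Fermat's little theorem (as in the proof of Lemma \ref{XTRGJEDN}) produces $n$ with $q\mid 2^n-1$; writing $2^n-1=qq'$ gives $\theta=pq'/(2^n-1)$, so $\omega$ is pure quadratic by (1). Conversely, if $\omega$ is pure quadratic then (1) gives $\theta=m/(2^n-1)$, so the reduced denominator $q$ divides the odd number $2^n-1$ and is odd; this proves (i). Part (ii) is then formal: by Theorem \ref{Thm Ichio Function}(2), $\theta=p/q\neq m/2^n$ already forces $\omega$ to be a quadratic irrational number, so ``non-pure'' is the negation of ``pure'', which by (i) is the negation of ``$q$ odd'', namely ``$q$ even''.

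The substance is (3). Writing the minimal period as $P=\{m\}_n=\underbrace{1\cdots1}_{k_0}\underbrace{0\cdots0}_{k_1}\cdots\underbrace{1\cdots1}_{k_{l-1}}$, everything hinges on how the run-lengths of $PPP\cdots$ behave at the period joins, which by Lemma \ref{ZZWHOPN} is controlled by $k_0$ and $k_{l-1}$. For the ``if'' direction, $\theta=m/(2^n-1)$ with $m$ even and $2^{n-1}\le m\le 2^n-2$ means, by Lemma \ref{ZZWHOPN}, that $k_0\ge1$ and $k_{l-1}=0$, i.e. $P$ begins with $1$ and ends with $0$; then no runs merge across the joins, the run-length sequence of $PPP\cdots$ is purely periodic $\overline{k_0,\dots,k_{l-2}}$, and by Lemma \ref{SKWIDDGJGE} with continuity of $\mathcal{A}$ (Theorem \ref{KIJLMMKU}) we obtain $\omega=CF(\overline{k_0,\dots,k_{l-2}})$, which is purely periodic. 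For the converse, a purely periodic continued fraction has all partial quotients $\ge1$, hence $\omega>1$, hence $\theta>1/2$ by Lemma \ref{EGHSRTWED}, so the design begins with $1$ and $k_0\ge1$ (equivalently $m\ge2^{n-1}$); moreover its partial-quotient sequence is purely periodic, so the design string is purely periodic and (1) gives $\theta=m/(2^n-1)$. If instead $k_{l-1}\ge1$, then $P$ begins and ends with $1$, the boundary $1$-blocks merge, and the partial-quotient sequence becomes $k_0,\overline{k_1,\dots,k_{l-2},k_{l-1}+k_0}$; writing $q$ for the minimal period of its tail, every positive multiple of $q$ indexes the value $k_{l-1}+k_0$, so pure periodicity would force $k_0=k_{l-1}+k_0$, a contradiction. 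Hence $k_{l-1}=0$, $m$ is even, and since an even $m$ cannot equal $2^n-1$ we get $2^{n-1}\le m\le 2^n-2$.

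The main obstacle is exactly this last bookkeeping: one must show cleanly that merging the boundary $1$-blocks genuinely destroys pure periodicity of the partial-quotient sequence (rather than merely shifting its period), while the no-merge case reproduces precisely $CF(\overline{k_0,\dots,k_{l-2}})$. This is where the distinction between periodicity of the design string and periodicity of its run-length sequence must be treated with care. As a sanity check, $\theta=2/3$ (period $10$, begins $1$ ends $0$) gives the purely periodic $\omega=CF(\overline1)=(1+\sqrt5)/2$, whereas $\theta=5/7$ (period $101$, begins and ends with $1$) gives $\omega=\sqrt3=CF(1,\overline{1,2})$, which is only eventually periodic.
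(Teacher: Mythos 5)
Your proof is correct, and for (1) and (2) it coincides with the paper's own argument: both rest on Definition \ref{DHHUDDGFGJ} together with Lemma \ref{XTRGJEDN} (3), and on Fermat's little theorem for the ``$q$ odd'' direction of (2)(i), with (2)(ii) obtained as the formal negation inside the class of quadratic irrationals. The genuine difference is in (3). The paper argues in the direction CF $\Rightarrow$ design: starting from the minimal period $k_0,\dots,k_{l-1}$ of the purely periodic continued fraction it \emph{constructs} the design period explicitly, splitting by the parity of $l$ (for $l$ odd the design period is the doubled word $1^{k_0}0^{k_1}\cdots 1^{k_{l-1}}0^{k_0}1^{k_1}\cdots 0^{k_{l-1}}$, with the roles of $0$ and $1$ swapped in the second half), observes that in both cases it begins with $1$ and ends with $0$, and then cites Lemma \ref{XTRGJEDN} (3), leaving the ``if'' direction implicit. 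You argue in the direction design $\Rightarrow$ CF: you take the design period $P=\{m\}_n$ and rule out $k_{l-1}\ge 1$ by a contradiction on periods --- the minimal period $q$ of the tail of the quotient sequence divides $l-1$, every positive multiple of $q$ indexes the merged value $k_{l-1}+k_0$, and pure periodicity of the whole sequence would force $k_0=k_{l-1}+k_0$. This is a correct and genuinely different treatment of the boundary-merging phenomenon that the paper's doubled-period construction encodes only implicitly; it isolates exactly why merging destroys pure periodicity rather than merely shifting the period. Your version also has the merit of making the ``if'' direction explicit (no merge at the joins yields the canonical expansion $CF(\overline{k_0,\dots,k_{l-2}})$ via Lemma \ref{SKWIDDGJGE} and continuity), where the paper writes only ``we have the result''; the paper's construction has the complementary merit of exhibiting the minimal design period concretely. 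Two small points you should make explicit in a final write-up: in the contradiction case you need $l\ge 3$, i.e.\ $P$ is not the all-ones word --- this is guaranteed because (1) already gives $m\le 2^n-2$; and the passage from ``quotient sequence purely periodic'' to ``design string purely periodic'' silently uses the same parity bookkeeping (a run-period of odd length forces doubling of the string period), which is harmless but worth a sentence.
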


\begin{proof}
(1) By Lemma \ref{XTRGJEDN} (3), we have the result.\\
(2) (i) Suppose $\omega=\mathcal{A}(\theta)$ is a pure quadratic irrational number.
Then there exists a purely periodic design $D$ such that $\theta=\theta_D$.
By Lemma \ref{XTRGJEDN} (3), we have $\theta_D=m/(2^n-1) \,\, (1<m<2^n-1)$.

Conversely, since 2 and $q$ are coprime, by the Fermat's little theorem, we have $2^{\varphi(q)} \equiv 1 \, (\text{mod}\,q)$.
Hence, there exists a positive integer $a$ such that $2^{\varphi(q)}-1=qa$. 
Then $\theta=p/q=pa/qa=pa/(2^{\varphi(q)}-1)$. 
By Lemma \ref{XTRGJEDN} (3), there exists a purely periodic design $D$ such that $\theta=\theta_D$.
Hence, $\omega$ is a pure quadratic irrational number. \\
(2) (ii) By Lemma \ref{XTRGJEDN} (2) and (2) (i), we have the result. \\
(3) Suppose $\omega=\mathcal{A}(\theta)=CF(k_0, k_1, \ldots, k_{l-1}, k_0, k_1, \ldots, k_{l-1}, \ldots)$ is a purely periodic continued fraction,
where $k_0,\,k_1,\ldots, k_{l-1}$ is the minimal period of quotients.
Then by Lemma \ref{SKWIDDGJGE}, there exists a purely periodic design $D=PP\cdots$ such that $\theta=\theta_D$.
Actually, the minimal period of design is given by
\[
\begin{split}
P 
&= 
\left\{
\begin{split}
&   \underbrace{1 \cdots 11}_{\tiny k_0}
     \underbrace{0 \cdots 00}_{\tiny k_1}
     \cdots
     \underbrace{1 \cdots 11}_{\tiny k_{l-2}}
     \underbrace{0 \cdots 00}_{\tiny k_{l-1}} \,\,\, ( \, l : \text{even}), \\
&   \underbrace{1 \cdots 11}_{\tiny k_0}
     \underbrace{0 \cdots 00}_{\tiny k_1}
     \cdots
     \underbrace{0 \cdots 00}_{\tiny k_{l-2}}
     \underbrace{1 \cdots 11}_{\tiny k_{l-1}}
     \underbrace{0 \cdots 00}_{\tiny k_0}
     \underbrace{1 \cdots 11}_{\tiny k_1}
     \cdots
     \underbrace{1 \cdots 11}_{\tiny k_{l-2}}
     \underbrace{0 \cdots 00}_{\tiny k_{l-1}} \,\,\, ( \, l : \text{odd}).
\end{split}
\right. 
\end{split}
\]
We set $P=\{m\}_n$.
Then $m$ is an even integer such that $2^{n-1} \le m \le 2^n-2$, and by Lemma \ref{XTRGJEDN} (3), we have the result.
\end{proof}

\begin{Lem} \label{LKBUYOIKGS}
Suppose $\omega=\mathcal{A}(\theta_D)$ is a pure quadratic irrational number,
where $D$ is a purely periodic design with the period $\{m\}_n \, (n \ge 2, \,\, 1 \le m \le 2^n-2)$.
Then $\omega$ is a root of 
\[
[2^n : 2^n-(m+1)]X^2-(\,[2^n : m+1]-[2^n : 2^n-m]\,)X-[2^n : m]=0. 
\]
The conjugate $\omega'$ of $\omega$ is negative,
and 
$
-\omega'=\mathcal{A}(\theta_{D^*}),
$
where $D^*$ is a purely periodic design with the period $\{m^*\}_n$ which is the inverse design of $\{m\}_n$
(cf. Definition \ref{ASWQHOPM}).
\end{Lem}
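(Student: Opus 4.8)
The plan is to exploit the purely periodic structure $D=PPP\cdots$ with period $P=\{m\}_n$, so that $D=PD$, and then apply Design Composition Theorem~I (Theorem~\ref{Thm_The Deviding Formula of Ichio Function}) to turn the self-similarity of $D$ into a fixed-point equation for $\omega$. Writing $U(P)=U(2^n:m)=\left(\begin{matrix} a & b \\ c & d\end{matrix}\right)$ with $a=[2^n:m+1]$, $b=[2^n:m]$, $c=[2^n:2^n-(m+1)]$ and $d=[2^n:2^n-m]$, the identity $\mathcal{A}(\theta_{PD})=U(P)\mathcal{A}(\theta_D)$ combined with $PD=D$ gives $\omega=U(P)\omega=\frac{a\omega+b}{c\omega+d}$. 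Clearing denominators yields $c\omega^2+(d-a)\omega-b=0$, which is exactly the stated quadratic once the entries are substituted; this settles the first claim.

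For the sign of the conjugate, I would read off Vieta's relations from $cX^2+(d-a)X-b=0$: the product of its two roots $\omega,\omega'$ equals $-b/c$. Here $b=[2^n:m]\ge 1$ because $m\ge 1$, and $c=[2^n:2^n-(m+1)]\ge 1$ because $1\le m\le 2^n-2$ forces $1\le 2^n-(m+1)\le 2^n-1$; hence $-b/c<0$. Since $\omega=\mathcal{A}(\theta_D)>0$ (as $\mathcal{A}$ maps into $[0,\infty)$ and $\theta_D>0$), it follows that $\omega'<0$.

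For the final identity, set $\eta=\mathcal{A}(\theta_{D^*})$, where $D^*=P^*P^*\cdots$ is purely periodic with period $P^*=\{m^*\}_n$; note that $\{m^*\}_n$ is again a legitimate period, since the inverse design is the reversal of the binary word and hence preserves the number of $1$'s and of $0$'s, keeping $1\le m^*\le 2^n-2$, so that $\eta$ is a positive quadratic irrational. Running the same fixed-point argument for $\eta$ gives $\eta=U(P^*)\eta$, and by Theorem~\ref{Thm_m^*} one has $U(\{m^*\}_n)=\left(\begin{matrix} d & b \\ c & a\end{matrix}\right)$, so $\eta$ solves $c\eta^2+(a-d)\eta-b=0$. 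This is precisely the polynomial obtained from the $\omega$-equation under the substitution $X\mapsto -X$, so its two roots are $-\omega$ and $-\omega'$. Since $\eta>0$ while $-\omega<0<-\omega'$, the positive root must be selected, giving $\eta=-\omega'$, that is, $-\omega'=\mathcal{A}(\theta_{D^*})$.

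The routine parts are clearing denominators and the Vieta bookkeeping; the single point requiring care is the root selection. In both the sign computation and the final identity the argument hinges on knowing $\omega>0$ and $\eta>0$ in order to single out the correct root of a quadratic that has one positive and one negative root. The only other ingredient to record cleanly is the matrix $U(\{m^*\}_n)$, which follows immediately from the entrywise identities of Theorem~\ref{Thm_m^*}.
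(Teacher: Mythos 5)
Your proof is correct and takes essentially the same route as the paper: the fixed-point equation $\omega=U(2^n\!:\!m)\,\omega$ from Design Composition Theorem I (Theorem \ref{Thm_The Deviding Formula of Ichio Function}) yields the quadratic, Vieta's formulas give $\omega'<0$, and Theorem \ref{Thm_m^*} handles the inverse design. The only cosmetic difference is at the last step, where you transform the quadratic by $X\mapsto -X$ and select the positive root $\eta=-\omega'$, while the paper substitutes $-\mathcal{A}(\theta_{D^*})$ directly into the original equation and notes it is the negative root.
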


\begin{proof}
Since $\omega$ is a irrational number by Theorem \ref{Thm Ichio Function},
$\omega'$ is also a irrational number.
By Theorem \ref{Thm_The Deviding Formula of Ichio Function} and the pure periodicity, we have
\[
\omega=U(2^n : m)\omega=\frac{[2^n : m+1]\omega+[2^n : m]}{[2^n : 2^n-(m+1)]\omega+[2^n : 2^n-m]}.
\]
Hence, $\omega$ is a root of  
\[
[2^n : 2^n-(m+1)]X^2-(\,[2^n : m+1]-[2^n : 2^n-m]\,)X-[2^n : m]=0, 
\]
and by the Vieta's formulas, $\omega'$ is a negative root of the equation. \\
By Theorem \ref{Thm_m^*}, $-\mathcal{A}(\theta_{D^*})$ is a root of this equation. 
Since $-\mathcal{A}(\theta_{D^*}) <0$, we have $-\omega'=\mathcal{A}(\theta_{D^*})$.
\end{proof}

\begin{Lem} \label{SDEEAAUBJRD}
Suppose $D$ is a purely periodic design with the period: 
\[
P=\{m\}_n=
  \underbrace{1 \cdots 11}_{\tiny k_0}
  \underbrace{0 \cdots 00}_{\tiny k_1}
  \cdots
  \underbrace{0 \cdots 00}_{\tiny k_{l-2}}
  \underbrace{1 \cdots 11}_{\tiny k_{l-1}},
\]
where $k_0 \ge 0, k_i \ge 1 \,\, (i=1, 2, \cdots, l-2)$, $k_{l-1} \ge 0, n=\sum_{i=0}^{l-1}k_i \ge 2$ and $1 \le m \le 2^n-2$. 
Let $\omega=\mathcal{A}(\theta_D)$, and $\omega'$ the conjugate of $\omega$.
Then we have the following: \\
{\rm (1)} \, $k_0\ge1$ and $k_{l-1}= 0$ if and only if $\omega>1$ and $-1<\omega'<0$. \\
{\rm (2)} \, $k_0\ge1$ and $k_{l-1}\ge1$ if and only if $\omega>1$ and $\omega'<-1$. \\
{\rm (3)} \, $k_0 = 0$  and $k_{l-1}=0$ if and only if $0<\omega<1$ and $-1<\omega'<0$. \\ 
{\rm (4)} \, $k_0 = 0$  and $k_{l-1}\ge1$ if and only if $0<\omega<1$ and $\omega'<-1$. 
\end{Lem}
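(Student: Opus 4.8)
The plan is to establish the two independent dichotomies
\[
k_0 \ge 1 \iff \omega > 1 \qquad\text{and}\qquad k_{l-1} \ge 1 \iff \omega' < -1,
\]
since the four assertions (1)--(4) are exactly the four Boolean combinations of these. First I would record some preliminary facts. By Lemma \ref{XTRGJEDN} (3) we have $\theta_D=m/(2^n-1)$, so $\theta_D\in(0,1)$ and $\theta_D\neq 1/2$ (the latter would have denominator $2^n$, not $2^n-1$); moreover $\omega=\mathcal{A}(\theta_D)$ is a quadratic irrational by Theorem \ref{Thm Ichio Function}, hence $\omega\neq 1$ and its conjugate satisfies $\omega'\neq 0,-1$. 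Consequently all the inequalities appearing in the statement are automatically strict, and $\omega>0$ because $\theta_D>0$.

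For the first dichotomy I would isolate the following elementary fact, valid for \emph{any} purely periodic design $E$ meeting the hypotheses of the lemma (period $\{m_0\}_{n_0}$ with $n_0\ge 2$ and $1\le m_0\le 2^{n_0}-2$) whose period begins with a block of exactly $j$ ones: the leading binary digit of $\theta_E$ is $1$ iff $j\ge 1$, and since $\theta_E=m_0/(2^{n_0}-1)\neq 1/2$ this gives $\theta_E>1/2\iff j\ge 1$ and $\theta_E<1/2\iff j=0$; applying Lemma \ref{EGHSRTWED} (1) then yields $\mathcal{A}(\theta_E)>1\iff j\ge 1$ and $0<\mathcal{A}(\theta_E)<1\iff j=0$. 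Applied to $E=D$, whose leading block is $\underbrace{1\cdots1}_{k_0}$, this gives $k_0\ge 1\iff\omega>1$ and $k_0=0\iff 0<\omega<1$.

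For the second dichotomy I would pass to the conjugate. By Lemma \ref{LKBUYOIKGS} the conjugate $\omega'$ is negative and $-\omega'=\mathcal{A}(\theta_{D^*})$, where $D^*$ is the purely periodic design with period $\{m^*\}_n$, the inverse design of $\{m\}_n$. By Definition \ref{ASWQHOPM} the period of $D^*$ is $\underbrace{1\cdots1}_{k_{l-1}}\underbrace{0\cdots0}_{k_{l-2}}\cdots\underbrace{1\cdots1}_{k_0}$, so its leading block has exactly $k_{l-1}$ ones. Reversing the blocks preserves the number of ones, so $m^*=0$ would force $m=0$ and $m^*=2^n-1$ would force $m=2^n-1$, both excluded; hence $1\le m^*\le 2^n-2$ and $D^*$ again satisfies the hypotheses of the lemma. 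The elementary fact above, applied now to $E=D^*$, gives $-\omega'>1\iff k_{l-1}\ge 1$ and $0<-\omega'<1\iff k_{l-1}=0$; that is, using $\omega'<0$, we obtain $\omega'<-1\iff k_{l-1}\ge 1$ and $-1<\omega'<0\iff k_{l-1}=0$.

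Combining the two dichotomies produces precisely the four equivalences (1)--(4). The step requiring the most care is the transfer of hypotheses to $D^*$: I must verify that $\{m^*\}_n$ is again a legitimate period (the bound $1\le m^*\le 2^n-2$) so that the elementary fact may be reapplied symmetrically, and I must carry along the sign information $\omega'<0$ furnished by Lemma \ref{LKBUYOIKGS} in order to convert statements about $-\omega'=\mathcal{A}(\theta_{D^*})$ into the asserted ranges for $\omega'$ itself.
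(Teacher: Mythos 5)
Your proposal is correct and follows exactly the route of the paper, whose proof is the one-line citation ``By Lemma \ref{EGHSRTWED} (1) and Lemma \ref{LKBUYOIKGS}, we have the result'': the first dichotomy ($k_0\ge 1\iff\omega>1$) is Lemma \ref{EGHSRTWED} (1) applied to $\theta_D$, and the second ($k_{l-1}\ge 1\iff\omega'<-1$) is the same fact applied to $-\omega'=\mathcal{A}(\theta_{D^*})$ via Lemma \ref{LKBUYOIKGS}. You have merely made explicit the details the paper leaves implicit (strictness via $\theta_D\neq 1/2$ since $2m\neq 2^n-1$, and the verification $1\le m^*\le 2^n-2$ so the argument applies to $D^*$), which is a faithful and careful expansion rather than a different proof.
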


\begin{proof}
By Lemma \ref{EGHSRTWED} (1) and Lemma \ref{LKBUYOIKGS}, we have the result.
\end{proof}

If $\omega$ satisfies one of (1), (2), (3) and (4) in Lemma \ref{SDEEAAUBJRD}, 
we call $\omega$ a quadratic irrational number of 
{\it type $1$}, {\it type $2$}, {\it type $3$} and {\it type $4$} respectively.

\begin{Thm} \label{LMIOUYDEES}
Let $\omega=\mathcal{A}(\theta)$ be a positive quadratic irrational number and $\omega'$ the conjugate of $\omega$.
Then we have the following: \\
{\rm (1)} \, $\omega$ is a pure quadratic irrational number if and only if $\omega'<0$. \\
{\rm (2)} \, $\omega$ is a non-pure quadratic irrational number if and only if $\omega'>0$. \\
\end{Thm}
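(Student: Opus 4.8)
The plan is to prove (1) and then deduce (2) by complementation. The forward implication of (1), that a pure quadratic irrational has $\omega'<0$, is already recorded in Lemma \ref{LKBUYOIKGS}. Since every positive quadratic irrational is either pure or non-pure (Definition \ref{DHHUDDGFGJ}), and since $\omega'$ is itself a quadratic irrational, hence nonzero, it suffices to establish the single implication that a \emph{non-pure} quadratic irrational satisfies $\omega'>0$. Once this is in hand, $\omega'<0$ forces $\omega$ to be pure (the non-pure alternative being excluded), which is the converse in (1); and (2) follows at once, because non-pure $\iff$ not pure $\iff$ not $(\omega'<0)$ $\iff \omega'>0$.

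For the remaining implication I would induct on the length $k$ of the pre-period of $\theta$, i.e.\ on the exponent of $2$ in the denominator of $\theta=p/q$; by Theorem \ref{SGFTJAKRE} (and Lemma \ref{XTRGJEDN}) $\omega$ is non-pure exactly when $q$ is even, so $k\ge 1$. The engine of the induction is the single-digit shift: writing the design of $\theta$ as $D=d_1D''$, where $D''$ is the design of $2\theta \bmod 1$, Design Composition Theorem I (Theorem \ref{Thm_The Deviding Formula of Ichio Function}) gives $\omega=U(d_1)\tilde\omega$ with $\tilde\omega=\mathcal{A}(\theta_{D''})$, and applying the Galois conjugation, a field automorphism that commutes with the rational map $U(d_1)$, yields $\omega'=U(d_1)\tilde\omega'$. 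An elementary computation shows that doubling lowers the $2$-adic valuation of the denominator by exactly one, so $D''$ has pre-period $k-1$.

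In the inductive step $k\ge 2$ the design $D''$ still has positive pre-period, so $\tilde\omega'>0$ by the induction hypothesis; since $U(d_1)\in\varGamma$ has non-negative entries it maps $[0,\infty]$ into itself (Definition \ref{Def_Design of Matrix}), whence $\omega'=U(d_1)\tilde\omega'>0$. The crux is therefore the base case $k=1$, where $D=d_1P^{\infty}$ with $P^{\infty}$ purely periodic and $\eta=\mathcal{A}(\theta_{P^{\infty}})$ pure, so $\eta'<0$. Here I would use that non-pureness of $d_1P^{\infty}$ means $d_1$ differs from the last digit of the minimal period $P$: by Lemma \ref{SDEEAAUBJRD} that last digit records whether $\eta'\in(-1,0)$ or $\eta'\in(-\infty,-1)$, and in each case the forbidden value of $d_1$ is exactly the one for which the explicit map $U(d_1)$, namely $x\mapsto x+1$ when $d_1=1$ and $x\mapsto x/(x+1)$ when $d_1=0$ (Lemma \ref{HFNJWTTN}), carries that interval onto the positive axis. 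Concretely, if the last digit of $P$ is $0$ then $d_1=1$ and $\eta'\in(-1,0)$, so $\omega'=\eta'+1>0$; if the last digit is $1$ then $d_1=0$ and $\eta'<-1$, so $\omega'=\eta'/(\eta'+1)>0$.

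The main obstacle I anticipate is the bookkeeping in this base case. One must pin down the equivalence between ``$d_1P^{\infty}$ is non-pure'' and ``$d_1$ is not the backward continuation of the minimal period $P$'', then align the two interval regimes of Lemma \ref{SDEEAAUBJRD} with the two single-digit M\"obius maps so that the sign always flips to positive. It is also worth checking carefully that taking $P$ to be the minimal period makes the last-digit criterion unambiguous, and that the shift strictly decreases the pre-period length (rather than merely not increasing it), so that the induction descends cleanly to the pure case handled by Lemma \ref{LKBUYOIKGS}.
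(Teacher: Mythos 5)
Your proposal is correct and takes essentially the same route as the paper: both prove pure $\Rightarrow \omega'<0$ via Lemma \ref{LKBUYOIKGS}, then show non-pure $\Rightarrow \omega'>0$ by isolating the pre-period digit adjacent to the purely periodic tail (which must differ from the period's last digit), using Lemma \ref{SDEEAAUBJRD} to place the conjugate $\eta'$ of the pure part in $(-1,0)$ or $(-\infty,-1)$, applying $U(1)x=x+1$ or $U(0)x=x/(x+1)$ to land on the positive axis, and concluding both equivalences by complementarity since $\omega'$ is irrational and hence nonzero. The only difference is organizational: the paper strips the remaining prefix $D''$ in one application of the positivity-preserving matrix $U(D'')\in\varGamma$, whereas you descend digit-by-digit by induction on the $2$-adic valuation of the denominator of $\theta$.
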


\begin{proof}
It is sufficient to show ``if parts'' of (1) and (2). \\
(1) \, Suppose that $\theta$ is a purely periodic design decimal.
Then by Lemma \ref{LKBUYOIKGS}, we have $\omega'<0$. \\
(2) \, Suppose that $\theta$ is a periodic design decimal but is not a purely periodic design decimal, and that $\theta$ is represented by an infinite design $D$.
Then there exist a finite design $D' \neq \varepsilon$ and a period $P \neq \varepsilon$ such that $D=D'PP \cdots$.
We set $D'=d\,'_1d\,'_2 \cdots d\,'_k$ and $P=p_1p_2 \cdots p_n$.
Then we may suppose $d\,'_k \neq p_n$.
We set $D''=d\,'_1d\,'_2 \cdots d\,'_{k-1}, \, E=PP \cdots, \eta=\mathcal{A}(\theta_E)$, and the conjugate root of $\eta$ as $\eta'$.
We note that for any finite design $F$, the conjugate root of $\mathcal{A}(\theta_{FE})=U(F)\eta$ is $U(F)\eta'$. \\
Case 1: $(d\,'_k, p_n)=(0, 1)$. 
By Lemma \ref{SDEEAAUBJRD} (1) and (3), we have $\eta'<-1$. Since
\[
U(0)\eta'=
\left(
\begin{matrix}
1  & 0 \\
1  & 1  
\end{matrix}
\right)\eta'
=\frac{\eta'}{\eta'+1}=\frac{1}{1+1/\eta'}>0
\]
and $D'=D''0$, we have $\omega'=U(D'')\{\eta'/(\eta'+1)\} >0$ by Theorem \ref{Thm_The Deviding Formula of Ichio Function}. \\
Case 2: $(d\,'_k, p_n)=(1, 0)$. 
By Lemma \ref{SDEEAAUBJRD} (2) and (4), we have $-1<\eta'<0$. Since
\[
U(1)\eta'=
\left(
\begin{matrix}
1  & 1 \\
0  & 1  
\end{matrix}
\right)\eta'
=\eta'+1>0
\]
and $D'=D''1$, we have  $\omega'=U(D'')(\eta'+1) >0$ by Theorem \ref{Thm_The Deviding Formula of Ichio Function}. 
Therefore this completes the proof.
\end{proof}

A.M.Legendre determined the continued fraction expression of the square root of a rational number.
We obtain an essentially equivalent result via design as follows.

\begin{Cor} \label{DDDWHKHCDGH}
For a positive rational number $Q$, $\sqrt{Q}$ is a quadratic irrational number if and only if 
there exists a purely periodic design $D$ with a palindromic period
such that $\sqrt{Q}=\mathcal{A}(\theta_D)$.
\end{Cor}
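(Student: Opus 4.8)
The plan is to convert the number-theoretic statement ``$\omega=\sqrt{Q}$'' into the purely algebraic condition $\omega'=-\omega$ on the algebraic conjugate $\omega'$, and then to read this condition off the design via Lemma \ref{LKBUYOIKGS}, which identifies $-\omega'$ with $\mathcal{A}(\theta_{D^{*}})$, the value attached to the inverse period $P^{*}$ (Definition \ref{ASWQHOPM}). The pivot of the whole argument is the chain of equivalences: the minimal period of $D$ is palindromic $\iff D=D^{*} \iff \omega=-\omega' \iff \omega^{2}\in\mathbb{Q}$, i.e. $\omega=\sqrt{Q}$. Injectivity of $\mathcal{A}$ (Theorem \ref{KIJLMMKU}) is what lets me pass between the design equality $D=D^{*}$ and the numerical equality $\omega=-\omega'$.

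For the forward direction I would begin with a quadratic irrational $\omega=\sqrt{Q}$, whose minimal polynomial is $X^{2}-Q$ so that $\omega'=-\sqrt{Q}<0$. By Theorem \ref{LMIOUYDEES}(1) the negativity of the conjugate forces $\omega$ to be a pure quadratic irrational, hence by Definition \ref{DHHUDDGFGJ} its preimage $\theta=\mathcal{A}^{-1}(\omega)$ is carried by a purely periodic design $D$. Applying Lemma \ref{LKBUYOIKGS} gives $-\omega'=\mathcal{A}(\theta_{D^{*}})$, while $-\omega'=\sqrt{Q}=\omega=\mathcal{A}(\theta_{D})$; comparing and invoking injectivity of $\mathcal{A}$ yields $\theta_{D^{*}}=\theta_{D}$, whence $D=D^{*}$ and the minimal period $P$ is palindromic.

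For the converse, suppose a purely periodic design $D$ with palindromic period satisfies $\sqrt{Q}=\mathcal{A}(\theta_{D})$. Since $D$ is infinite and periodic, $\omega=\mathcal{A}(\theta_{D})$ is a quadratic irrational by Theorem \ref{Thm Ichio Function}(2), so $\sqrt{Q}$ is one as asserted. To see that palindromicity genuinely characterizes square roots I would run the computation backwards: $P=P^{*}$ gives $D=D^{*}$, so Lemma \ref{LKBUYOIKGS} delivers $-\omega'=\mathcal{A}(\theta_{D^{*}})=\mathcal{A}(\theta_{D})=\omega$; then $\omega+\omega'=0$ collapses the minimal polynomial to $X^{2}-\omega^{2}$, forcing $\omega^{2}=Q\in\mathbb{Q}_{>0}$ and hence $\omega=\sqrt{Q}$.

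The step I expect to be most delicate is the implication ``$D=D^{*}$ as purely periodic designs $\implies$ the minimal period is palindromic.'' Here I must check that inversion preserves the minimal period length, so that equating the infinite words $PP\cdots=P^{*}P^{*}\cdots$ on their first $n=|P|=|P^{*}|$ symbols legitimately produces $P=P^{*}$; I also need to verify throughout that the period obeys the constraints $n\ge 2$ and $1\le m\le 2^{n}-2$ from Definition \ref{VXGMQEWAG}(5), which are exactly the hypotheses under which Lemma \ref{LKBUYOIKGS} and the inverse design of Definition \ref{ASWQHOPM} are available. The surrounding conjugate and minimal-polynomial bookkeeping is then routine.
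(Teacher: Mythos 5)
Your proposal is correct and takes essentially the same route as the paper, which likewise deduces pure periodicity from the fact that the conjugate $-\sqrt{Q}$ is negative via Theorem \ref{LMIOUYDEES}, and then extracts the palindromic period from Lemma \ref{LKBUYOIKGS}, Definition \ref{ASWQHOPM} and the injectivity of $\mathcal{A}$ (Theorem \ref{KIJLMMKU}). The details you flag as delicate (that the inverse design preserves the period length, so $PP\cdots=P^{*}P^{*}\cdots$ forces $P=P^{*}$, and the converse computation that $\omega=-\omega'$ forces $\omega^{2}\in\mathbb{Q}$) are exactly what the paper's terse citation chain leaves implicit, and your handling of them is sound.
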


\begin{proof}
Since the conjugate of $\sqrt{Q}$ \, is $-\sqrt{Q}$, by Theorem \ref{LMIOUYDEES}, $\sqrt{Q}$ is represented by a purely periodic design. 
By Definition \ref{ASWQHOPM}, 
Theorem \ref{KIJLMMKU} and Lemma \ref{LKBUYOIKGS}, we have the result.
\end{proof}

\begin{Exm} \label{Symmetric Minimal Period} 
\[
\begin{split}
(1)& \sqrt{2}=\mathcal{A}(
0.
\dot{1}00\dot{1}1001 \cdots). \qquad \qquad \quad \,\,\,\,\,
(2) \, \sqrt{3}=\mathcal{A}(
0.
\dot{1}0\dot{1}101 \cdots). \\
(3)& \sqrt{5}=\mathcal{A}(
0.
\dot{1}100001\dot{1}11000011 \cdots).  \quad \,\,\,\,\,
(4) \, \sqrt{6}=\mathcal{A}(
0.
\dot{1}1001\dot{1}110011 \cdots). \\
(5)& \sqrt{7}=\mathcal{A}(
0.
\dot{1}10101\dot{1}1101011 \cdots). \quad \quad \,\,\,\,\,
(6) \, \sqrt{8}=\mathcal{A}(
0.
\dot{1}101\dot{1}11011 \cdots). \\
(7)& \sqrt{1/3}=\mathcal{A}(
0.
\dot{0}1\dot{0}010 \cdots).  \qquad \qquad \qquad \!
(8) \, \sqrt{2/5}=\mathcal{A}(
0.
\dot{0}101101\dot{0}01011010 \cdots).
\end{split}
\]
\end{Exm}

\section{Differentiability of the assembly function}

In this section, as an application of previous sections, 
we discuss differentiability of the assembly function at rational points.

\medskip

For the assembly function $w=\mathcal{A}(\theta)\ (\theta \in (0, 1))$, if
$$\lim_{h \to -0}\frac{\mathcal{A}(\eta+h)-\mathcal{A}(\eta)}{h}
=\lim_{h \to +0}\frac{\mathcal{A}(\eta+h)-\mathcal{A}(\eta)}{h}=+\infty
\quad (\eta \in (0, 1)),$$
then we denote by $\mathcal{A}'(\eta)=\infty$.
However $\mathcal{A}(\theta)$ is NOT differential at $\theta=\eta$.

\begin{Lem}
\label{Fibonacci}
Let $b_m=[
\underbrace{1, 1, \ldots, 1}_{\tiny m}]\ (m=1, 2, \ldots)$ be the continuant 
(cf.\ Definition \ref{Def_Special Matrix}).
Then we have
$$b_m=\frac{5+2\sqrt{5}}{5}\left(
\frac{1+\sqrt{5}}{2}
\right)^{m-2}
+\frac{5-2\sqrt{5}}{5}\left(
\frac{1-\sqrt{5}}{2}
\right)^{m-2},$$
and
$$\left(
\frac{1+\sqrt{5}}{2}
\right)^{m-2}<b_m.$$
\end{Lem}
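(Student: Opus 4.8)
The plan is to recognize $b_m=[\underbrace{1,\ldots,1}_{m}]$ as a linear recurrence sequence and solve it explicitly, then deduce the stated lower bound. First I would use the recursive definition of the continuant (Definition \ref{Def_Special Matrix}, equation (\ref{SXSAWJTDDJ})) with all variables set to $1$: since
\[
[x_0,\ldots,x_{l-3},x_{l-2},x_{l-1}]=[x_0,\ldots,x_{l-2}]\,x_{l-1}+[x_0,\ldots,x_{l-3}],
\]
specializing to all $1$'s gives $b_m=b_{m-1}+b_{m-2}$ for $m\ge 3$, with initial data $b_1=[1]=1$ and $b_2=[1,1]=1\cdot1+1=2$. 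Thus $b_m$ is essentially the Fibonacci sequence, shifted so that $b_1=1,\,b_2=2$.

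\textbf{Solving the recurrence.} The associated characteristic equation is $x^2=x+1$, whose roots are the golden ratio $\phi=\tfrac{1+\sqrt5}{2}$ and its conjugate $\psi=\tfrac{1-\sqrt5}{2}$. Hence $b_m=A\phi^{m}+B\psi^{m}$ for constants $A,B$ determined by the initial conditions; the form stated in the lemma is exactly this general solution rewritten with exponent $m-2$, so I would write $b_m=A'\phi^{m-2}+B'\psi^{m-2}$ and solve the $2\times 2$ linear system coming from $b_1=1$ and $b_2=2$ (equivalently, it is cleaner to anchor at $b_2=2$ and $b_3=3$, since $b_3=b_2+b_1=3$). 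Solving gives $A'=\tfrac{5+2\sqrt5}{5}$ and $B'=\tfrac{5-2\sqrt5}{5}$; one checks $A'+B'=2=b_2$ and $A'\phi+B'\psi=3=b_3$, confirming the closed form. This is a routine computation and the only genuinely mechanical part.

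\textbf{The lower bound.} For the inequality $\phi^{m-2}<b_m$, I would note that $b_m-\phi^{m-2}=(A'-1)\phi^{m-2}+B'\psi^{m-2}$. Since $\psi=\tfrac{1-\sqrt5}{2}$ is negative with $|\psi|<1$, the term $B'\psi^{m-2}$ alternates in sign and shrinks, so the bound is not simply "drop the conjugate term"—that term can be negative. The cleaner route is to observe $A'=\tfrac{5+2\sqrt5}{5}=1+\tfrac{2\sqrt5}{5}>1$, and then argue that the surplus $(A'-1)\phi^{m-2}$ dominates the conjugate contribution $B'\psi^{m-2}$. I expect this domination estimate to be the only real obstacle: because $|\psi|<1<\phi$, the ratio $|B'\psi^{m-2}|/\big((A'-1)\phi^{m-2}\big)=\tfrac{|B'|}{A'-1}\,|\psi/\phi|^{\,m-2}$ tends to $0$, but one must verify it stays below $1$ for all $m\ge 1$ (the small cases $m=1,2$ where the exponent is negative deserve a direct check). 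Alternatively, and perhaps most transparently, I would bypass the closed form entirely and prove $\phi^{m-2}<b_m$ by induction on $m$: verify $m=1,2$ by hand ($\phi^{-1}<1=b_1$ and $\phi^{0}=1<2=b_2$), and for the inductive step use $b_{m}=b_{m-1}+b_{m-2}>\phi^{m-3}+\phi^{m-4}=\phi^{m-4}(\phi+1)=\phi^{m-4}\phi^2=\phi^{m-2}$, invoking the defining identity $\phi+1=\phi^2$. This induction sidesteps the sign issue with $\psi$ altogether and is the approach I would actually present for the inequality.
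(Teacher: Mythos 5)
Your proof is correct and takes essentially the same approach as the paper: the paper's proof likewise derives $b_1=1$, $b_2=2$, and $b_{m+2}=b_{m+1}+b_m$ from the continuant recursion (\ref{SXSAWJTDDJ}), identifies $\{b_m\}$ as the Fibonacci sequence, and then simply asserts ``hence we have the result.'' Your Binet-style computation of the constants $\tfrac{5+2\sqrt5}{5}$, $\tfrac{5-2\sqrt5}{5}$ (correctly verified against $b_2=2$, $b_3=3$) and your induction $b_m=b_{m-1}+b_{m-2}>\phi^{m-3}+\phi^{m-4}=\phi^{m-2}$ for the inequality --- rightly noting that one cannot just drop the alternating conjugate term --- supply exactly the routine details the paper leaves implicit.
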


\begin{proof}
By Definition \ref{Def_Special Matrix} (\ref{SXSAWJTDDJ}), 
we have $b_1=1$, $b_2=2$ and $b_{m+2}=b_{m+1}+b_m$ 
(i.e.\ $\{b_m\}_{m=1}^{\infty}$ is the Fibonacci's sequence).
Hence we have the result.
\end{proof}

\begin{Lem}
\label{diff=0}
$\mathcal{A}'(2/3)=0$.
\end{Lem}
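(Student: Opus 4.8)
The plan is to locate $2/3$ inside the design framework and then estimate the assembly function on the nested dyadic intervals shrinking to it. By Lemma \ref{XTRGJEDN} (3), since $2/3=2/(2^2-1)$, the point $2/3$ is the design decimal $\theta_D$ of the purely periodic design $D=PP\cdots$ with minimal period $P=\{2\}_2=10$; thus $\theta_{2/3}=0.\overline{10}$ and, by Example \ref{LKHBYT} (1), $\mathcal{A}(2/3)=(1+\sqrt5)/2$ (the value itself will not be needed). First I would fix, for each $n$, the $n$-bit truncation $m_n/2^n$ of $2/3$, whose design is the length-$n$ prefix $1010\cdots$ of $D$, together with the bracketing interval $[\,m_n/2^n,\,(m_n+1)/2^n\,]\ni 2/3$. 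The essential exact input is the gap formula already obtained inside the proof of Theorem \ref{KIJLMMKU} (a consequence of Theorem \ref{Thm_det=1}):
\[
\mathcal{A}\!\left(\tfrac{m_n+1}{2^n}\right)-\mathcal{A}\!\left(\tfrac{m_n}{2^n}\right)=\frac{1}{[2^n:2^n-m_n]\,[2^n:2^n-(m_n+1)]}.
\]

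Next I would evaluate the two conjugate SDIs in this denominator. Writing the prefix design in block form and applying Theorem \ref{Lem_4FRC_Matrix} (2),(4) (which express $[2^n:2^n-m]$ and $[2^n:2^n-(m+1)]$ as continuants of the block lengths), a short parity computation identifies the two factors as the consecutive Fibonacci continuants $b_m=[\underbrace{1,\dots,1}_{\tiny m}]$ of Lemma \ref{Fibonacci}, namely $b_{n-2}$ and $b_{n-1}$ (in one order or the other according to the parity of $n$), so that their product is $b_{n-1}b_{n-2}$ in both cases. Hence the average slope over the $n$-th dyadic interval is
\[
\sigma_n:=\frac{\mathcal{A}((m_n+1)/2^n)-\mathcal{A}(m_n/2^n)}{2^{-n}}=\frac{2^n}{b_{n-1}b_{n-2}}.
\]
Invoking the lower bound $b_m>\big((1+\sqrt5)/2\big)^{m-2}$ from Lemma \ref{Fibonacci} gives $b_{n-1}b_{n-2}>\big((1+\sqrt5)/2\big)^{2n-7}$, whence $\sigma_n<\big((1+\sqrt5)/2\big)^{7}\big(2/((1+\sqrt5)/2)^2\big)^n\to 0$, because $((1+\sqrt5)/2)^2=(3+\sqrt5)/2>2$.

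Finally I would upgrade the vanishing of the average slopes $\sigma_n$ to the vanishing of the genuine two-sided derivative. The geometric point is that, since $0.\overline{10}$ leaves the self-similar tail $2/3$ or $1/3$ after each truncation, $2/3$ sits at relative position $1/3$ or $2/3$ inside every bracketing interval; consequently both endpoint gaps obey $\tfrac13 2^{-n}\le \big|2/3-m_n/2^n\big|,\ \big|(m_n+1)/2^n-2/3\big|\le \tfrac23 2^{-n}$. Given small $h>0$, I would take the largest $N$ with $(m_N+1)/2^N\ge 2/3+h$; then $2/3+h>(m_{N+1}+1)/2^{N+1}$ forces $h>\tfrac13 2^{-(N+1)}=\tfrac16 2^{-N}$, while monotonicity of $\mathcal{A}$ (Theorem \ref{KIJLMMKU}) gives $\mathcal{A}(2/3+h)-\mathcal{A}(2/3)\le \mathcal{A}((m_N+1)/2^N)-\mathcal{A}(m_N/2^N)=2^{-N}\sigma_N$. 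Combining, $0\le \frac{\mathcal{A}(2/3+h)-\mathcal{A}(2/3)}{h}\le 6\sigma_N\to 0$ as $h\to 0^+$, and the left-hand quotient is handled symmetrically via the left endpoints $m_n/2^n$. The main obstacle is exactly this last step: the estimate controls $\mathcal{A}$ only at dyadic points, and converting it into a bound valid for every real increment $h$ relies precisely on the uniform interior position of $2/3$ in its dyadic intervals; the parity bookkeeping needed to pin the two conjugate SDIs down as $b_{n-1},b_{n-2}$ is the other place demanding care.
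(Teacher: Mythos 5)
Your proposal is correct and takes essentially the same route as the paper's proof: the paper likewise brackets $2/3$ between the endpoints of its dyadic intervals (at even depths $2n-2$, with $\theta_n$ and $\rho_n=\theta_n+2^{-(2n-2)}$), computes the endpoint gap via the determinant identity (2:1) as the reciprocal of a product of Fibonacci continuants ($b_{2n-3}b_{2n-4}$, matching your $b_{N-1}b_{N-2}$ at depth $N$), and concludes from $\bigl((1+\sqrt5)/2\bigr)^2>2$. Your passage from dyadic average slopes to arbitrary increments $h$, using the uniform relative position of $2/3$ in its bracketing intervals, is the same monotonicity mechanism the paper implements by pinning $h$ into the ranges between consecutive $h_n=2^{-2n}$.
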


\begin{proof}
The binary decimal of $2/3$ is $\eta=0.\dot{1}\dot{0}10\cdots$.

\medskip

\noindent
(1) On the right derivative:

We set
$\theta_n=0.\underbrace{10\cdots 10}_{\tiny 2n-2},\ 
\rho_n=0.\underbrace{10\cdots 10}_{\tiny 2n-4}11$,\ 
and
$h_n=2^{-2n}.$
Note that
$$\eta+h_n=0.\underbrace{10\cdots 10}_{\tiny 2n-2}11\dot{1}\dot{0}10\cdots.$$
Then we have
$\theta_n<\eta<\eta+h_n<\rho_n$.

\medskip

Suppose that $h_{n-1}<h\le h_n$.
Then by Theorem \ref{KIJLMMKU}, we have
\begin{align}
0<\frac{\mathcal{A}(\eta+h)-\mathcal{A}(\eta)}{h}
<\frac{\mathcal{A}(\rho_n)-\mathcal{A}(\theta_n)}{h_{n-1}}. \tag{8:1} \label{8:1} 
\end{align}

Suppose that the design of $\theta_n$ is $\{m\}_{2n-2}$.
Then that of $\rho_n$ is $\{m+1\}_{2n-2}$.
By Theorem \ref{Thm_det=1} (2:1), Definition \ref{Def_Ichio Function} and Theorem \ref{KIJLMMKU}, 
we have
\begin{align}
\mathcal{A}(\rho_n)-\mathcal{A}(\theta_n)
=\frac{1}{[2^{2n-2}:2^{2n-2}-(m+1)][2^{2n-2}:2^{2n-2}-m]}. \tag{8:2} \label{8:2} 
\end{align}
Since
$\{2^{2n-2}-(m+1)\}_{2n-2}
=\underbrace{01\cdots 01}_{\tiny 2n-2}\quad
\mbox{and}\quad
\{2^{2n-2}-m\}_{2n-2}
=\underbrace{01\cdots 01}_{\tiny 2n-4}10$, 
we have 
$$
[2^{2n-2}:2^{2n-2}-(m+1)]
=[\underbrace{1, 1, \cdots, 1}_{\tiny 2n-3}]
=b_{2n-3}
$$
and
\begin{align}
[2^{2n-2}:2^{2n-2}-m]
=[\underbrace{1, 1, \cdots, 1}_{\tiny 2n-6}, 2]
=b_{2n-5}+b_{2n-6}=b_{2n-4}\tag{8:3} \label{8:3} 
\end{align}
by Theorem \ref{Thm_FRC_Matrix}, 
Lemma \ref{Lem_Special Matrix} (6), (7), (8) and Lemma \ref{Fibonacci}.

\medskip

By (8:1), (8:2) and (8:3), we have
$$0<\frac{\mathcal{A}(\eta+h)-\mathcal{A}(\eta)}{h}<
\frac{2^{n-1}}{b_{2n-3}}\times \frac{2^{n-1}}{b_{2n-3}}.$$
Since
$$\left(
\frac{1+\sqrt{5}}{2}
\right)^2=\frac{3+\sqrt{5}}{2}>2,$$
we have
$$\lim_{n\to \infty}\left(\frac{2^{n-1}}{b_{2n-3}}\times \frac{2^{n-1}}{b_{2n-3}}\right)=0\quad
\mbox{and}\quad
\lim_{h\to +0}\frac{\mathcal{A}(\eta+h)-\mathcal{A}(\eta)}{h}=0.$$

\medskip

\noindent
(2) On the left derivative:

By the same way as (1), we have
${\displaystyle
\lim_{h\to -0}\frac{\mathcal{A}(\eta+h)-\mathcal{A}(\eta)}{h}=0}$.

Therefore we have the result.
\end{proof}

\begin{Lem}
\label{affine}
Let $D=\{m\}_n\ (0\le m\le 2^n-1)$ be a finite design, and
$D'$ an arbitrary design.
Then we have the following:

\medskip

\noindent
{\rm (1)}\ 
$$\mathcal{A}'(\theta_{DD'})
=\frac{2^n\mathcal{A}'(\theta_{D'})}
{\left\{[2^n:2^n-(m+1)]\mathcal{A}(\theta_{D'})+[2^n:2^n-m]
\right\}^2}.$$

\noindent
{\rm (2)}\ 
$\mathcal{A}'(\theta_{DD'})=0$ if and only if $\mathcal{A}'(\theta_{D'})=0$.

\medskip

\noindent
{\rm (3)}\ 
$\mathcal{A}'(\theta_{DD'})=\infty$ if and only if $\mathcal{A}'(\theta_{D'})=\infty$.

\end{Lem}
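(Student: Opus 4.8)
The plan is to reduce all three parts to a single chain-rule computation, made rigorous through difference quotients, by exploiting the fact that composing designs acts \emph{affinely} on design decimals while the assembly function transforms by a M\"obius map. Write $\eta=\theta_{DD'}$ and set $a=[2^n:m+1]$, $b=[2^n:m]$, $c=[2^n:2^n-(m+1)]$, $d=[2^n:2^n-m]$, so that $U(D)=U(2^n:m)=\left(\begin{smallmatrix} a & b \\ c & d\end{smallmatrix}\right)$ with $ad-bc=1$ by Theorem \ref{Thm_det=1}. First I would record two structural facts. By the remark following Definition \ref{SJEDGFSTYT} the increasing affine map $\phi\mapsto\theta_D+2^{-n}\phi$ carries $\theta_{D''}$ to $\theta_{DD''}$ for every design $D''$; equivalently, every $\theta$ in $[\theta_D,\theta_D+2^{-n}]$ has the form $\theta_{DD''}$ with $\phi:=2^n(\theta-\theta_D)=\theta_{D''}\in[0,1]$. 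By Theorem \ref{Thm_The Deviding Formula of Ichio Function} this yields the closed form $\mathcal{A}(\theta)=g(\mathcal{A}(\phi))$ on that subinterval, where $g(w)=\frac{aw+b}{cw+d}$ is the linear fractional transformation attached to $U(D)$.

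The key point is that $g$ is a genuine smooth function on the domain that actually occurs. Since $w=\mathcal{A}(\phi)\ge 0$ and $d=[2^n:2^n-m]\ge 1$ is a positive integer (cf.\ Lemma \ref{Lem_Modular Matrix} (1)), the denominator satisfies $cw+d\ge 1>0$, so $g$ is differentiable there with $g'(w)=\frac{ad-bc}{(cw+d)^2}=\frac{1}{(cw+d)^2}>0$. Now fix $\phi_0=\theta_{D'}$, put $w_0=\mathcal{A}(\phi_0)$, and for $h$ small enough that $\eta+h$ stays in the subinterval set $k=2^nh$ and $w_k=\mathcal{A}(\phi_0+k)$. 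Using $\mathcal{A}(\eta+h)=g(w_k)$, $\mathcal{A}(\eta)=g(w_0)$ and $h=2^{-n}k$, I would factor the difference quotient as
\[
\frac{\mathcal{A}(\eta+h)-\mathcal{A}(\eta)}{h}
=2^n\cdot\frac{g(w_k)-g(w_0)}{w_k-w_0}\cdot\frac{w_k-w_0}{k},
\]
which is legitimate because $\mathcal{A}$ is strictly increasing (Theorem \ref{KIJLMMKU}), so $w_k\neq w_0$ whenever $k\neq 0$.

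The crux is that the middle factor behaves well independently of any differentiability of $\mathcal{A}$: as $k\to 0$, continuity of $\mathcal{A}$ (Theorem \ref{KIJLMMKU}) gives $w_k\to w_0$, hence $\frac{g(w_k)-g(w_0)}{w_k-w_0}\to g'(w_0)=\frac{1}{(cw_0+d)^2}$, a positive finite number. Consequently the left-hand quotient has a limit if and only if the right-hand factor $\frac{w_k-w_0}{k}=\frac{\mathcal{A}(\phi_0+k)-\mathcal{A}(\phi_0)}{k}$ does; and since $\phi\mapsto\theta_D+2^{-n}\phi$ is increasing, the one-sided approaches correspond, so this is precisely the assertion that $\mathcal{A}'(\eta)$ exists in the extended sense if and only if $\mathcal{A}'(\theta_{D'})$ does. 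Passing to the limit gives
\[
\mathcal{A}'(\theta_{DD'})=\frac{2^n\,\mathcal{A}'(\theta_{D'})}{\{\,c\,\mathcal{A}(\theta_{D'})+d\,\}^2},
\]
which is (1). Because the scalar $2^n/(cw_0+d)^2$ is strictly positive and finite, multiplying by it preserves both the value $0$ and the value $\infty$, yielding (2) and (3) simultaneously. The only delicate points I anticipate are keeping $\eta+h$ inside $[\theta_D,\theta_D+2^{-n}]$ so that the closed form for $\mathcal{A}$ applies (handled by shrinking $h$, and, when $\theta_{D'}\in\{0,1\}$, by reading the argument one-sidedly), and confirming that $g'(w_0)$ is finite and positive so that the zero/infinity dichotomy genuinely transfers in both directions.
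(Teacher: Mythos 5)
Your proposal is correct and takes essentially the same route as the paper: both derive the M\"obius identity $\mathcal{A}(\theta_{DD'})=\bigl([2^n:m+1]\mathcal{A}(\theta_{D'})+[2^n:m]\bigr)/\bigl([2^n:2^n-(m+1)]\mathcal{A}(\theta_{D'})+[2^n:2^n-m]\bigr)$ from Theorem \ref{Thm_The_Deviding_Formula_of_Ichio_Function} (Design Composition Theorem I, cf.\ Theorem 6.11) together with the affine relation $\theta_{D'}=2^n\theta_{DD'}-m$, then differentiate, with the unimodularity of Theorem \ref{Thm_det=1} yielding the numerator $2^n\mathcal{A}'(\theta_{D'})$ and the positivity of the denominator giving (2) and (3). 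Your only departure is that you justify the chain-rule step honestly via the three-factor difference-quotient decomposition (using strict monotonicity and continuity from Theorem \ref{KIJLMMKU}), which the paper applies formally; this added care is what legitimately covers the extended values $0$ and $\infty$ and the endpoint cases $\theta_{D'}\in\{0,1\}$.
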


\begin{proof}
(1) By Theorem \ref{Thm_det=1} (2:1), 
Theorem \ref{Thm_The Deviding Formula of Ichio Function} 
and $\theta_{D'}=2^n\theta_{DD'}-m$, 
we have
$$\mathcal{A}(\theta_{DD'})=
\frac{[2^n:m+1]\mathcal{A}(\theta_{D'})+[2^n:m]}
{[2^n:2^n-(m+1)]\mathcal{A}(\theta_{D'})+[2^n:2^n-m]},$$
and
$$\mathcal{A}'(\theta_{DD'})=
\frac{2^n\mathcal{A}'(\theta_{D'})}
{\left\{[2^n:2^n-(m+1)]\mathcal{A}(\theta_{D'})+[2^n:2^n-m]\right\}^2}.$$

\noindent
(2), (3) 
Since (1) and $[2^n:2^n-m]>0$, we have the result.
\end{proof}

\newpage

The following is the third main theorem.

\begin{Thm}
\label{main3}
Let $S$ be the set of rational numbers of the form $m/2^n$, 
where $m$ and $n$ are two positive integers with $1 \le m \le 2^n-1$,
and $T$ the set of rational numbers in $(0,1)$.
Then for the assembly function $w=\mathcal{A}(\theta)$, we have the following:

\medskip

\noindent
{\rm (1)}\ 
For $\eta \in S$, we have $\mathcal{A}'(\eta)=\infty$.

\medskip

\noindent
{\rm (2)}\ 
For $\eta \in T\setminus S$, 
if $w=\mathcal{A}(\theta)$ is differential at $\theta=\eta$, then
we have $\mathcal{A}'(\eta)=0$.

\end{Thm}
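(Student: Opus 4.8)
The plan is to prove the two parts by completely different mechanisms, both driven by the self-similarity in Lemma \ref{affine}. For (1) I would reduce every dyadic point to the single base point $\eta=1/2$ by peeling off the first binary digit of its design and applying Lemma \ref{affine}(3). For (2) I would first strip the non-periodic prefix via Lemma \ref{affine}(2) to reduce to a purely periodic design, and then extract a fixed-point equation from the period and use a parity trick to force the derivative to vanish.

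For (1): write $\eta=m/2^n$ with $m$ odd, so its design $\{m\}_n=d_1d_2\cdots d_n$ ends in $d_n=1$, and induct on $n$. The base case $n=1$ is $\eta=1/2$, where $\mathcal{A}(1/2)=1$. Here I evaluate $\mathcal{A}$ along $1/2+2^{-(k+2)}=\theta_{1\underbrace{\scriptstyle 0\cdots0}_{k}1}$, whose design is $1\underbrace{0\cdots0}_{k}1$, so by Lemma \ref{SKWIDDGJGE} $\mathcal{A}(1/2+2^{-(k+2)})=CF(1,k,1)=1+\tfrac{1}{k+1}$. Sandwiching an arbitrary small $h\in(2^{-(k+2)},2^{-(k+1)}]$ and using monotonicity (Theorem \ref{KIJLMMKU}), the right difference quotient is bounded below by $\tfrac{2^{k+1}}{k+1}\to\infty$, so the right derivative is $+\infty$; the left derivative follows from the reflection $\mathcal{A}(1-\theta)=1/\mathcal{A}(\theta)$ of Lemma \ref{EGHSRTWED}(2). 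For the step $n\ge2$, write $\{m\}_n=d_1\cdot D''$ with $D''=d_2\cdots d_n$; since $d_n=1$, $D''$ represents an odd dyadic point $\theta_{D''}\in S$ of strictly smaller length, and Lemma \ref{affine}(3) gives $\mathcal{A}'(\eta)=\infty\iff\mathcal{A}'(\theta_{D''})=\infty$, closing the induction.

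For (2): let $\eta\in T\setminus S$. By Lemma \ref{XTRGJEDN}(2) its design is periodic, $D=D'E$ with $D'$ finite and $E=PP\cdots$ purely periodic. Lemma \ref{affine}(1) shows $\mathcal{A}'(\eta)$ is a positive finite multiple of $\mathcal{A}'(\theta_E)$, so it suffices to treat the purely periodic $E$. Now replace $P$ by $Q:=PP$, of even length $N=2|P|$; then $E=QE$, and Lemma \ref{affine}(1) applied to $Q=\{m'\}_N$ and $D'=E$ yields, with $\omega=\mathcal{A}(\theta_E)$ and $L=\mathcal{A}'(\theta_E)$,
\[
L=\frac{2^N L}{\{[2^N:2^N-(m'+1)]\,\omega+[2^N:2^N-m']\}^2}.
\]
If $\mathcal{A}$ were differentiable at $\theta_E$ with $L\neq0$, cancelling $L$ forces $(c\omega+d)^2=2^N$, where $c=[2^N:2^N-(m'+1)]\ge1$ (since $1\le m'\le 2^N-2$ by Definition \ref{VXGMQEWAG}) and $d=[2^N:2^N-m']$. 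As $N$ is even and $c\omega+d>0$, this gives $c\omega+d=2^{N/2}\in\mathbb{Z}$, hence $\omega=(2^{N/2}-d)/c\in\mathbb{Q}$, contradicting Theorem \ref{Thm Ichio Function}(2) which asserts that $\omega$ is a quadratic irrational. Therefore $L=0$.

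The main obstacle is precisely this cancellation step, and it is where the parity trick is indispensable: for an odd-length period the relation $(c\omega+d)^2=2^N$ is consistent with $\omega\in\mathbb{Q}(\sqrt2)$ and yields no contradiction, so one must first double the period to make $N$ even and collapse $2^{N/2}$ to an integer. A secondary technical point is making the base-case blow-up in (1) uniform in $h$ rather than merely along the sequence $2^{-(k+2)}$, which the monotonicity sandwich takes care of.
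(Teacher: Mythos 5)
Your proposal is correct, and in part (1) it is essentially the paper's argument with cosmetic variations: the paper also reduces to $\eta=1/2$ via Lemma \ref{affine}(3) (in a single application, taking $D$ to be the whole prefix $d_1\cdots d_{n-1}$, rather than your digit-by-digit induction; both work), and it establishes the same kind of lower bound, computing $\mathcal{A}(1/2+2^{-n-1})=(n+1)/n$ via the matrix product $U(\{1\}_1)U(\{0\}_1)^{n-1}$ and Theorems \ref{Thm_The Deviding Formula of Matrix} and \ref{Thm_The Deviding Formula of Ichio Function}, where you evaluate $CF(1,k,1)$ directly through Lemma \ref{SKWIDDGJGE}; your reflection trick $\mathcal{A}(1-\theta)=1/\mathcal{A}(\theta)$ for the left derivative is a tidy shortcut where the paper merely says the left side goes ``the same way.'' The genuine divergence is the endgame of part (2). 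The paper keeps the period $P=\{m\}_n$ with $n$ of arbitrary parity, so its degenerate equation is $[2^n:2^n-(m+1)]\mathcal{A}(\eta)+[2^n:2^n-m]=2^{n/2}$, and it then invokes the explicit root formula of Lemma \ref{LKBUYOIKGS} to rewrite this as $2^{n/2}=\bigl(Y+\sqrt{Y^2-4}\bigr)/2$ with $Y=[2^n:m+1]+[2^n:2^n-m]\in\mathbb{Z}$, concluding $Y=2$ and $n=0$ by an integrality analysis of the surd (for odd $n$, squaring forces $2^{n/2}=(2^n+1)/Y\in\mathbb{Q}$, a contradiction). Your period-doubling move sidesteps Lemma \ref{LKBUYOIKGS} and the surd analysis entirely: with $N=2|P|$ even and $c=[2^N:2^N-(m'+1)]\ge 1$, the equation $(c\omega+d)^2=2^N$ gives $c\omega+d=2^{N/2}\in\mathbb{Z}$, hence $\omega\in\mathbb{Q}$, contradicting Theorem \ref{Thm Ichio Function}(2). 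This is more elementary and arguably cleaner; the trade-off is exactly the one you flag, namely that the bare rationality argument cannot handle odd period length (there $\omega\in\mathbb{Q}(\sqrt{2})$ is not absurd for a quadratic irrational), so the doubling is indispensable for your route, while the paper's heavier use of the quadratic equation handles both parities at once. Both proofs are complete.
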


\begin{proof}
(1) By Lemma \ref{affine} (3), it is sufficient to show only the case $\eta=1/2=0.1$.

\medskip

\noindent
(i) On the right derivative:

We set
$h_n=2^{-n-1}.$
Note that $\eta+h_n=0.1\underbrace{0\cdots 0}_{\tiny n-1}1.$

\medskip

Suppose that $h_n<h\le h_{n+1}$.
Since $\mathcal{A}(1/2)=1$ (Lemma \ref{EGHSRTWED} (1)), 
$$U(\{1\underbrace{0\cdots 0}_{\tiny n-1}\}_n)
=U(\{1\}_1)U(\{0\}_1)^{n-1}
=\left(
\begin{matrix}
1  & 1 \\
0  & 1  
\end{matrix}
\right)
\left(
\begin{matrix}
1  & 0 \\
1  & 1  
\end{matrix}
\right)^{n-1}
=\left(
\begin{matrix}
n  & 1 \\
n-1  & 1  
\end{matrix}
\right)$$
(Theorem \ref{Thm_The Deviding Formula of Matrix})
and Theorem \ref{Thm_The Deviding Formula of Ichio Function}, 
we have
$$\mathcal{A}(\eta+h_n)=\frac{n+1}{n}.$$
Then by Theorem \ref{KIJLMMKU}, we have
$$
\frac{\mathcal{A}(\eta+h)-\mathcal{A}(\eta)}{h}
>\frac{\mathcal{A}(\eta+h_n)-\mathcal{A}(\eta)}{h_{n+1}}
=\frac{2^{n+2}}{n}.
$$
Since 
$$\lim_{n\to \infty}\frac{2^{n+2}}{n}=+\infty,$$
we have
$$\lim_{h\to +0}\frac{\mathcal{A}(\eta+h)-\mathcal{A}(\eta)}{h}=+\infty.$$

\medskip

\noindent
(ii) On the left derivative:

By the same way as (i), we have
${\displaystyle
\lim_{h\to -0}\frac{\mathcal{A}(\eta+h)-\mathcal{A}(\eta)}{h}=+\infty}$.

Therefore we have the result.

\bigskip

\noindent
(2) By Lemma \ref{affine} (2), it is sufficient to show only the case that
$\eta=\theta_D$ and $D$ is purely periodic with the period $P\ne \{0\}_1, \{1\}_1$.
We set $P=\{m\}_n$ with $1 \le m \le 2^n-1$ and $n>0$.
By Lemma \ref{affine} (1), we have
$$\mathcal{A}'(\eta)
=\frac{2^n\mathcal{A}'(\eta)}
{\left\{[2^n:2^n-(m+1)]\mathcal{A}(\eta)+[2^n:2^n-m]
\right\}^2}.$$
Suppose that $\mathcal{A}'(\eta)\ne 0$.
Then we have
\begin{align}
[2^n:2^n-(m+1)]\mathcal{A}(\eta)+[2^n:2^n-m]=2^{n/2}.
\tag{8:4} \label{8:4} 
\end{align}
By Theorem \ref{Thm_det=1} (2:1) and Lemma \ref{LKBUYOIKGS}, we have
\begin{align}
\mathcal{A}(\eta)
=\frac{[2^n : m+1]-[2^n : 2^n-m]
+\sqrt{([2^n : m+1]+[2^n : 2^n-m])^2-4}}{2[2^n : 2^n-(m+1)]}.
\tag{8:5} \label{8:5} 
\end{align}
By (8:4) and (8:5), $Y=[2^n : m+1]+[2^n : 2^n-m]$ satisfies
$$2^{n/2}=\frac{Y+\sqrt{Y^2-4}}{2}.$$
Since $Y$ is an integer, we have $Y=2$ and $n=0$.
This is a contradiction.
Therefore we have $\mathcal{A}'(\eta)=0$.
\end{proof}

\begin{Rem}
{\rm
It is easy to see from Lemma \ref{diff=0}, Lemma \ref{affine} and Theorem \ref{main3}, 
both the sets
$X_0=\{\eta \in (0,1)\ |\ \mathcal{A}'(\eta)=0\}$
and
$X_{\infty}=\{\eta \in (0,1)\ |\ \mathcal{A}'(\eta)=\infty\}$
are dense in $(0,1)$.
Lebesgue's theorem says that a monotonely increasing continuous function 
is differentiable at almost every points.
Since the assembly function is strictly increasing continuous, 
it is differentiable at almost every points.
In our next paper \cite{Yamada2}, we show that at every point,
the derivation is $0$ or $\infty$, and gave a necessary and sufficient condition
for differentiability on any rational point.
}
\end{Rem}

%\section{Affine similarity dimension of the assembly function}

%In this section, 
%we show that the affine similarity dimension of the graph of the assembly function is greater than $1$.

%\begin{Def}
%{\rm
%Let $\phi$ be an affine map of $\mathbb{R}^n$ such that
%$\phi(x)=Ax+b$ where
%$x$ and $b$ are $n$-dimensional column vectors, and
%$A$ is an $n\times n$ square matrix.
%Then the {\it magnification} of $\phi$ is 
%the $n$-th root of the absolute value of the determinant of $A$.

%For a Lebesgue measurable set $X$ in $\mathbb{R}^n$, 
%if there exists a set of affine maps $\{\phi_i\}_{i=1}^{\infty}$ of $\mathbb{R}^n$
%such that $X$ is a union of $\{\phi_i(X)\}_{i=1}^{\infty}$, and
%the Lebesgue measure of $\phi_i(X)\cap \phi_j(X)\ (i\ne j)$ is $0$.
%Then $X$ is called an {\it affine self-similar set}.
%Let $r_i\ (i=1, \ldots, )$ be the magnification of $\phi_i$.
%Then the {\it affine similarity dimension} $s=s(X)$ of $X$ is the root of the equation:
%$$\sum_{i=1}^{\infty}r_i^s=1.$$
%It is easy to see that $0\le s\le n$.}
%\end{Def}

\bigskip

We have studied the fundamental properties of design and the assembly function.
In the forthcoming paper, on the basis of these knowledge, we shall clarify importance of 
the assembly function and the relationship with Markov conjecture.
\\ \\
\textbf{Acknowledgments.} 
I have studied SDIs, assembly function and Markov conjecture for almost 30 years.
Under the guidance of Akito Nomura, I have tried to complete the present paper since 6 years ago.
However, regrettably, Nomura passed away in June 2016.
Afterwards, under the guidance of Teruhisa Kadokami, I have completed the present paper.
For their useful advices, I am sincerely grateful to them. 

\newpage

\end{document}